\begin{document}

		\newtheorem{definition}{Definition}[section]
		\newtheorem{theorem}{Theorem}[section]
		\newtheorem{lemma}{Lemma}[section]
		\newtheorem{corollary}{Corollary}[]
		\newtheorem*{T. lemma}{Technical lemma }
		\newtheorem{proposition}{Proposition}[section]
		\numberwithin{equation}{section}
		\newtheorem{remark}{Remark}[section]

		\title[]{
		Quasilinear elliptic problem without Ambrosetti-Rabinowitz condition involving a potential in Musielak-Sobolev spaces setting   
		}
		\author{Soufiane MaatouK}
		\email{sf.maatouk@gmail.com}
		\author{Abderrahmane El Hachimi}
		\email{aelhachi@yahoo.fr}
		\address{Center of Mathematical Research and Applications of Rabat (CeReMAR), Laboratory of Mathematical Analysis and Applications (LAMA), Department of Mathematics, Faculty of Sciences, Mohammed V University, P.O. Box 1014, Rabat, Morocco.}	
		\subjclass[2010]{Primary 35A15, 35B38, 35J62 }
		\maketitle
\begin{abstract}
	In this paper, we consider the following  quasilinear elliptic problem with potential
	$$(P)
	\begin{cases}
	-\mbox{div}(\phi(x,|\nabla u|)\nabla u)+ V(x)|u|^{q(x)-2}u= f(x,u) & \ \ \mbox{ in }\Omega,\\
	u=0 & \ \ \mbox{ on }   \partial\Omega,
	\end{cases}$$
	where $\Omega$ is a smooth bounded   domain in $\mathbb{R}^{N}$ ($N\geq 2$), $V$ is a given function in a generalized Lebesgue space $L^{s(x)}(\Omega)$, and $f(x,u)$ is a Carathéodory function satisfying suitable growth conditions. Using  variational arguments, we study the existence of weak solutions for $(P)$ in the framework of  Musielak-Sobolev spaces.  The main difficulty here is that the nonlinearity $f(x,u)$ considered does not satisfy the well-known Ambrosetti-Rabinowitz condition.
\end{abstract}
\section{Introduction}
Let $\Omega\subset \mathbb{R}^{N} (N\geq 2)$ be a bounded smooth domain. Assume that  $\phi : \Omega \times [0,+\infty) \to [0,+\infty)$ is a Carath\'{e}odory function such that for all $x\in \Omega$, we have
$$(\phi)\begin{cases}
\phi(x,0)=0, \quad \phi(x,t).t \; \mbox{ is strictly  increasing}, &\\ 
 \phi(x,t).t>0, \; \forall t>0 \; \mbox{ and }\; \phi(x,t).t\to +\infty \mbox{ as } t \to +\infty.
\end{cases}$$
In this paper, we study   the following quasilinear elliptic  problem
$$(P)
\begin{cases}
-\mbox{div}(\phi(x,|\nabla u|)\nabla u)+ V(x)|u|^{q(x)-2}u= f(x,u) & \ \ \mbox{ in }\Omega,\\
u=0 & \ \ \mbox{ on }   \partial\Omega,
\end{cases}$$
where $V$ is a potential  belonging to $L^{s(x)}(\Omega)$,  $q$ and $s$ : $\bar{\Omega} \to (1,\infty)$ are  continuous functions and $f : \Omega \times \mathbb{R} \to \mathbb{R} $ is a Carath\'{e}odory function which  satisfies some suitable growth conditions. Precise conditions concerning the functions $q$, $s, f $ and    $V$ will be given hereafter.

 Problem $(P)$ appears in many branches of mathematical physics and has been studied extensively in recent years. From an application point of
view, this problem has its backgrounds in such hot topics as image processing, nonlinear
electrorheological fluids and elastic mechanics. We refer the readers to \cite{Chen,Ruzicka} and the references therein for more background of applications. 
 In particular,  when  $\phi(x,t)=t^{p(x)-2}$, where $p$ is a continuous function on $\overline{\Omega}$ with the condition  $\min_{x\in\bar{\Omega}}p(x)>1$, the operator involved in $(P)$ is the $p(x)$-Laplacian operator, i.e. $\Delta_{p(x)}u:=\mbox{div}(|\nabla u|^{p(x)-2}\nabla u)$. This differential operator is a natural
generalization of the $p$-Laplacian operator $\Delta_{p}u:=\mbox{div}(|\nabla u|^{p-2}\nabla u)$ where $p > 1$ is
a real constant. Note that   the $p(x)$-Laplacian operator possesses more complicated
nonlinearities   than the $p$-Laplacian operator (for example, it is nonhomogeneous), so more  complicated
analysis has to be carefully carried out.

 The interest in analyzing this kind of problems   is also motivated by some recent advances in
the study of problems involving nonhomogeneous operators in   divergence form. We refer for instance to the results in \cite{Abdou,Silva,Bonanno 1,Bonanno 2,Bonanno 3,Bonanno 4,X. Fan2,Bin,V,V2,P. Zhao 2}.  The studies for $p(x)$-Laplacian problems have been extensively considered by many researchers in various ways (see e.g. \cite{Abdou,X. Fan 1,Kefi,V2}). It should  be noted that our problem $(P)$ enables the presence of many other operators such as double-phase  and variable exponent double-phase operators.

  Before moving forward, we give a review of some results related to  our work.
We start by the case where the potential $V\equiv 0$ on $\Omega$.  Fan and Zhang  in \cite{X. Fan 1}, proved the existence of a nontrivial solution and obtained infinitely many solutions   for  a Dirichlet problem involving the $p(x)$-Laplacian operator.  Mihailescu,  Pucci and  V. Radulescu in \cite{Borenau}, have studied the question of multiplicity of solutions for a class of anisotropic elliptic equations with variable exponent. In the anisotropic setting we also refer the reader to \cite{Pucci1,Pucci2}.  Cl\'{e}ment, Garc\'{\i}a-Huidobro and  Schmitt  in \cite{Clément}, established the  existence of a nontrivial solution  for more general quasilinear equation in the framework of Orlicz-Sobolev spaces, in the case where the function $\phi$ considered in $(P)$ is independent of $x$, i.e.  $\phi(x,t)=\phi(t)$. Remaining in  the  Orlicz-Sobolev space setting, Bonanno, Molica Bisci and R\u{a}dulescu in \cite{Bonanno 1,Bonanno 2,Bonanno 3}  proved some important results concerning the existence of infinitely many weak solutions for a nonhomogeneous eigenvalue Dirichlet problem (see also \cite{Bonanno 4}).  Liu and  Zhao in \cite{P. Zhao}, obtained the existence of a nontrivial solution and infinitely many solutions  for a  quasilinear  equation related to problem $(P)$ in the framework of  Musielak-Sobolev spaces (see also \cite{X. Fan2}).

 In the above mentioned papers,   the authors assumed, among other conditions, that the nonlinearity $f$ satisfy to the well-known Ambrosetti-Rabinowitz  condition ((A-R)   condition for short); which  , for the $p$-Laplacian operator, asserts that
there exist two  constants $M>0$ and $\theta>p$, such that
$$   0<\theta F(x,t)\leq f(x,t)t, \quad \forall |t|\geq M,$$
where $F(x,t)=\int_{0}^{t}f(x,s)ds$. Clearly, this condition implies the existence of two positive constants $c_1, c_2$ such that
\begin{equation}\label{consequence of AR}
F(x,t)\geq c_1 |t|^{\theta} -c_2, \quad \forall (x,t)\in \Omega\times \mathbb{R}.
\end{equation}
This means that $f$ is $p$-superlinear at infinity in the sense that
\begin{equation}
\lim\limits_{|t|\to +\infty}\frac{F(x,t)}{|t|^{p}}= +\infty.
\end{equation}
This type of condition was introduced by Ambrosetti and Rabinowitz in their famous paper \cite{AR} and has since  become one of the main tools for finding solutions to elliptic problems of variational type; especially in order to prove the boundedness of Palais-Smale sequence of the energy functional associated with such a problem. Unfortunately, there
are several nonlinearities which are $p$-superlinear but do not satisfy the (A-R) condition.  For instance, if we take $f(x,t)=|t|^{p-2}t\ln(1+|t|)$, then we can check that for any $\theta>p$, $F(x,t)/|t|^{\theta}\to 0$ as $|t|\to +\infty$. However, many recent types of research have been made to drop the (A-R) condition (see e.g. \cite{Silva, H. Toan, Bin, V2} and references therein).

In \cite{Silva}, the authors studied a similar problem as that in \cite{Clément} and proved the existence of at least a nontrivial solution under the following assumptions on  the nonlinearity $f$:  there exist an $N$-function $\Gamma$ (cf. \cite{Rao}) and  positive constants $C,R$  such that
\begin{equation}\label{Silva}
\qquad \qquad \Gamma\left(\frac{F(x,t)}{|t|^{\phi_{0}}}\right)\leq C\bar{F}(x,t), \quad \forall (x,|t|)\in \Omega\times [R,+\infty),
\end{equation}
and
\begin{equation}\label{silva2}
\lim\limits_{|t|\to +\infty} \frac{f(x,t)}{|t|^{\phi^{0}-1}}=+\infty, \quad \lim\limits_{|t|\to 0} \frac{f(x,t)}{|t|\phi(t)}=\lambda,
\end{equation}
where  $\bar{F}(x,t):=f(x,t)t-\phi^{0}F(x,t)$, $\lambda$ some nonnegative constant and $\phi_{0}, \phi^{0}$ are defined  in relation (\ref{delta2}) below (when $\phi(x,t)=\phi(t)$ independent of $x$) with specific  assumptions.  It should be noted that the condition (\ref{Silva}) is a type of "nonquadraticity condition at infinity", which was first introduced by Costa and Magalh\~{a}es in  \cite{Costa} for the Laplacian operator   (with $\phi_{0}=\phi^{0}=2$) as follows:
 \begin{equation*}
 \liminf_{|t|\to +\infty} \frac{\bar{F}(x,t)}{|t|^{\sigma}}\geq a>0,
 \end{equation*}
  holds for some $\sigma>0$. We would  also like to mention that this condition  plays an important role in proving the boundedness of   Palais-Smale sequences.

   In \cite{H. Toan} also, the authors   considered a similar problem as that in \cite{Clément} and proved the existence of a  nontrivial solution under the following assumptions on the nonlinearity $f$: there exist  $\mu_{1}, \mu_{2}>0$ such that
  \begin{equation}\label{limits of f and F}
  \lim\limits_{|t|\to +\infty} \frac{F(x,t)}{|t|^{\phi^{0}}}=+\infty, \quad \lim\limits_{|t|\to 0} \frac{f(x,t)}{|t|^{\phi^{0}-1}}=0,
  \end{equation}
  \begin{equation}\label{monotonicity}
  \qquad \bar{F}(x,t)\leq \bar{F}(x,s) + \mu_1, \quad \forall (x,t)\in \Omega\times(0,s) \; \mbox{or } \; \forall(x,t)\in \Omega\times(s,0),
  \end{equation}
 and
  \begin{equation}
  \bar{H}(ts)\leq \bar{H}(t) + \mu_2, \quad \forall t\geq 0 \; \mbox{ and }\;  s\in [0,1],
  \end{equation}
    where $\bar{H}(t):=\phi^{0}\Phi(t)-\phi(t)t^{2}$ with $\Phi(t)=\int_{0}^{t}\phi(s)sds$.\\

  On the other hand, in the few last years, studies on double phase problems have attracted more and more interest and many results have been obtained. Especially,  in \cite{Bin} the authors proved the existence of a nontrivial solution and obtained infinitely many solutions  for a double phase problem without (A-R) condition. More precisely, they considered the problem $(P)$ (with $V\equiv 0$) with the function $\phi(x,t)=t^{p-2}+a(x)t^{q-2}$, where $a: \overline{\Omega}\mapsto [0,+\infty)$ is Lipschitz continuous, $1<p<q<N$, $\frac{q}{p}<1+\frac{1}{N}$ and  the nonlinearity $f$ satisfies the assumptions (\ref{limits of f and F}) and (\ref{monotonicity}) above with $\phi_{0}=p$ and $\phi^{0}=q$. In \cite{Bin2} however, the authors considered the same previous problem and proved the existence of infinitely many solutions; but instead of hypotheses (\ref{limits of f and F}) and (\ref{monotonicity}) the nonlinearity $f$ is supposed to satisfy the assumption (\ref{Silva}) above where $\Gamma(t)=|t|^{\sigma}$ with  $\sigma>\max\{1, \frac{N}{p}\}$,   and $F(x,t)\geq 0$ for any $(x,|t|)\in \Omega \times[R, +\infty)$ is such that $ \lim\limits_{|t|\to +\infty} \frac{F(x,t)}{|t|^{\phi^{0}}}=+\infty$.   In the same paper, the authors obtained also similar existence result under the following assumption instead of $(\ref{Silva})$:  there exist $\mu>q$ and $\theta>0$ such that
  \begin{equation*}
  \mu F(x, t) \leq tf (x, t) + \theta|t|^p, \quad\forall (x, t) \in \Omega\times \mathbb{R}.
  \end{equation*}

    Now, we give some  review results concerning the case where the potential $V\not\equiv 0$ on $\Omega$.
In \cite{Abdou}, Abdou and Marcos,  proved the existence of multiple solutions for a Dirichlet problem involving the $p(x)$-Laplacian operator with a changing sign  potential $V$ belonging to a generalized Lebesgue space $ L^{s(x)}(\Omega)$    when the nonlinearity $f$ satisfies some growth condition under (A-R) condition. In that work, the main assumptions on   the variable exponents $q(\cdot),s(\cdot)$ and $p(\cdot)$  are such that: $q,s,p\in \mathcal{C}_{+}(\overline{\Omega})$ (see notation below) and satisfy  $1<q(x)<p(x)\leq N<s(x)$ for any $x\in \overline{\Omega}$.

Recently, in \cite{V} the authors proved  the existence of nontrivial non-negative and non-positive solutions, and obtained infinitely many solutions for the quasilinear equation $- \mbox{div} A(x,\nabla u)+V(x)|u|^{\alpha(x)-2}u=f(x,u)$ in $\mathbb{R}^{N}$, where the divergence type operator has behaviors like $|\zeta|^{q(x)-2}$ for small $|\zeta|$ and like $|\zeta|^{p(x)-2}$ for large $|\zeta|$, where $1<\alpha(\cdot)\leq p(\cdot)<q(\cdot)<N$. In that paper,  it is supposed that the potential $V \in L_{loc}^{1}(\mathbb{R}^{N})$ verifies $V(\cdot)\geq V_{0}>0$, $V(x)\to +\infty$ as $|x|\to +\infty$ and that the nonlinearity $f$ satisfies some growth condition with the following assumption instead of (A-R) condition: there exist constants $M, C_1, C_2>0$ and  a function $a$ such that
\begin{equation}
C_1 |t|^{q(x)}[\ln (e+|t|)]^{a(x)-1}\leq C_2\frac{f(x,t)t}{\ln(e+|t|)}\leq f(x,t)t-s(x)F(x,t), \;\forall (x,|t|)\in \mathbb{R}^{N}\times [M,+\infty),
\end{equation}
where ess$_{x\in \mathbb{R}^{N}} \inf (a(x)-q(x))>0,$ $q(\cdot)\leq s(\cdot)$ and ess$_{x\in \mathbb{R}^{N}}\inf (p^{\ast}(x)-s(x))>0$ with $p^{\ast}(x):=\frac{Np(x)}{N-p(x)}$.
Related to this subject, we refer the readers to some important results concerning the study of the eigenvalue problems (see \cite{Benou,Bonanno 1,Bonanno 2,Bonanno 3,Bonanno 4,Kefi, Kim,Mihai2} and the references therein).

A main motivation of our current study is that, to the best of our knowledge, there is little research considering both the potential $V\not\equiv 0$ and nonlinearity $f$ without (A-R) condition for more general quasilinear equation in the framework  of Musielak-Sobolev spaces.  In this paper, our main goal is to show the existence  of weak solutions to the problem $(P)$. Firstly,  by using  standard lower semicontinuity argument, we prove the existence of weak solutions under the condition that $V\in L^{s(x)}(\Omega)$ has changing sign, and the nonlinearity $f$ satisfies the condition $(f_{0})$  below. Secondly, we establish the existence of at least a nontrivial solution and the existence of infinitely many solutions by using Mountain Pass Theorem and Fountain theorem respectively, where $V\in L^{s(x)}(\Omega)$ has constant sign and the nonlinearity $f$ does not satisfy the  (A-R) condition.  For these purposes, we propose a  set of growth conditions under which we are able to check the Palais-Smale condition. More precisely, we prove the boundedness of Palais-Smale sequences  by using a similar condition to that in (\ref{Silva}) above instead of (A-R) condition.

The paper is organized as follows: In Section 2, we recall some definitions and basic properties about Musielak-Orlicz-Sobolev spaces and variable exponent Lebesgue-Sobolev spaces. In Section 3, we state our main results and  in Section 4 we give the  proofs. Finally,   in Section 5, we give an application of our main results.
\section{Preliminary results}
In the study of nonlinear partial differential equations, it is well known that more general functional space can
handle differential equations with more nonlinearities. For example, the $p$-Laplacian  equations correspond to the classical Sobolev space setting, the $p(x)$-Laplacian equations correspond to the variable exponent Sobolev space setting, etc. Concerning the problem $(P)$, Musielak-Sobolev spaces are  the adequate functional spaces corresponding to the  solutions.  We shall therefore start by recalling some basic facts about  these spaces. For more details we refer  the readers to the papers \cite{Hudzik, Musielak,X. Fan, P. Zhao}.

Define
$$\Phi(x,t)=\int_{0}^{t}\phi(x,s)sds, \quad \forall t\geq0.$$

Since the function $\phi$ satisfies the condition $(\phi)$, then $\Phi$ is a generalized $N$-function, that is, for each $t\in [0, +\infty)$,  $\Phi(.,t)$ is measurable and for a.e. $x\in \Omega$, $\Phi(x,.)$ is continuous, even, convex, $\Phi(x,0)=0$, $\Phi(x,t)>0$ for $t>0$ which satisfies the conditions
$$\lim\limits_{t\to 0^{+}}\frac{\Phi(x,t)}{t}=0\quad\mbox{and}\lim\limits_{t\to +\infty}\frac{\Phi(x,t)}{t}=+\infty.$$

We denote by $N(\Omega)$ the set of generalized $N$-functions. For $\Phi \in N(\Omega),$ the Musielak-Orlicz space $L^{\Phi}(\Omega)$ is defined by
$$L^{\Phi}(\Omega):=\left\{ u : u: \Omega \to \mathbb{R} \mbox{ is measurable, and } \exists \lambda>0 \mbox{ such that} \int_{\Omega}\Phi\left(x, \frac{|u(x)|}{\lambda}\right)dx < \infty \right\}$$
endowed with the Luxemburg norm
$$\|u\|_{L^{\Phi}(\Omega)}=\|u\|_{\Phi}:=\inf\left\{ \lambda>0 : \int_{\Omega}\Phi\left(x, \frac{|u(x)|}{\lambda}\right)dx \leq 1\right\}.$$
Next, we define  the Musielak-Sobolev space $W^{1,\Phi}(\Omega)$ by
$$W^{1,\Phi}(\Omega):=\left\{u\in L^{\Phi}(\Omega) : |\nabla u|\in L^{\Phi}(\Omega)\right\}$$
endowed with the norm
$$\|u\|_{W^{1,\Phi}(\Omega)}=\|u\|_{1,\Phi}:=\|u\|_{\Phi}+\|\nabla u\|_{\Phi},$$
where $\|\nabla u\|_{\Phi}=\||\nabla u|\|_{\Phi}$.
\begin{remark}
	In the particular case where $\Phi(x,t)=\Phi(t)$ is independent of $x$,   $W^{1,\Phi}(\Omega)$ is actually an Orlicz-Sobolev space while in the case where $\Phi(x,t)=|t|^{p(x)}$, this space   becomes the variable exponent Sobolev space $W^{1,p(.)}(\Omega)$.
\end{remark}
 The function $\tilde{\Phi} : \Omega\times [0, +\infty)\to [0, +\infty)$ defined by
$$\tilde{\Phi}(x,t)=\sup_{s> 0}(ts-\Phi(x,s)), \quad \mbox{ for } x\in \Omega \mbox{ and } t\geq0,$$ is called the complementary function to $\Phi$ in the sense of  Young. We observe that the function $\tilde{\Phi}$   belongs to $N(\Omega)$, and $\Phi$ is also  the complementary function to $\tilde{\Phi}$. Furthermore, $\Phi$ and $\tilde{\Phi}$ satisfy the Young inequality
$$st\leq \Phi(x,t) + \tilde{\Phi}(x,s), \quad \mbox{ for } x\in \Omega \mbox{ and } s,t \geq 0.$$

Throughout this paper, we assume  that there exist two positive constants $\phi_{0}$ and $\phi^{0}$ such that
\begin{equation}\label{delta2}
1<\phi_{0}\leq \frac{\phi(x,t)t^{2}}{\Phi(x,t)}\leq \phi^{0}<N, \quad \mbox{ for } x\in \Omega \mbox{ and } t>0.
\end{equation}
This relation gives the following result (see \cite[Proposition 2.1]{Mihai}):
\begin{lemma}\label{min-max for Phi}
	Let $u\in L^{\Phi}(\Omega)$ and $\rho,t\geq 0,$ then we have
	\begin{equation}
	\min\left\{\rho^{\phi_0},\rho^{\phi^{0}}\right\}\Phi(x,t)\leq\Phi(x,\rho t) \leq \max\left\{\rho^{\phi_0},\rho^{\phi^{0}}\right\}\Phi(x,t),
	\end{equation}
	\begin{equation}
	\min\left\{\|u\|^{\phi_0}_{\Phi},\|u\|^{\phi^{0}}_{\Phi}\right\}\leq\int_{\Omega}\Phi(x,|u(x)|)dx \leq \max\left\{\|u\|^{\phi_0}_{\Phi},\|u\|^{\phi^{0}}_{\Phi}\right\}.
	\end{equation}
\end{lemma}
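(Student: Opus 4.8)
The plan is to prove the two displayed inequalities in order, deriving the second (the modular one) from the first (the pointwise one) by a scaling argument, in the spirit of \cite{Mihai}. Fix $x\in\Omega$ such that $\phi(x,\cdot)$ is continuous — which is the case for a.e.\ $x$, since $\phi$ is a Carath\'eodory function — and set $g(\tau):=\Phi(x,\tau)$ for $\tau>0$. Since $\Phi(x,\tau)=\int_0^\tau\phi(x,s)s\,ds$, the fundamental theorem of calculus gives $g'(\tau)=\phi(x,\tau)\tau$, so that hypothesis \eqref{delta2} says precisely that
$$\frac{\phi_0}{\tau}\le(\ln g)'(\tau)=\frac{g'(\tau)}{g(\tau)}\le\frac{\phi^0}{\tau},\qquad\tau>0.$$
This is the substantive input of the lemma; everything else is bookkeeping with the $\min$ and $\max$.

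For the pointwise estimate, the cases $t=0$ and $\rho\in\{0,1\}$ are immediate from $\Phi(x,0)=0$, so assume $t>0$, $\rho>0$, $\rho\neq1$. If $\rho>1$, integrating the chain above over $[t,\rho t]$ yields $\phi_0\ln\rho\le\ln g(\rho t)-\ln g(t)\le\phi^0\ln\rho$, i.e.\ $\rho^{\phi_0}\Phi(x,t)\le\Phi(x,\rho t)\le\rho^{\phi^0}\Phi(x,t)$, which is the claim since $\rho^{\phi_0}=\min\{\rho^{\phi_0},\rho^{\phi^0}\}$ and $\rho^{\phi^0}=\max\{\rho^{\phi_0},\rho^{\phi^0}\}$ for $\rho>1$ (recall $1<\phi_0\le\phi^0$). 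If $0<\rho<1$, integrating instead over $[\rho t,t]$ gives $\rho^{\phi^0}\Phi(x,t)\le\Phi(x,\rho t)\le\rho^{\phi_0}\Phi(x,t)$, which is again the claim because now $\rho^{\phi^0}\le\rho^{\phi_0}$.

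For the modular estimate, put $\lambda:=\|u\|_\Phi$; the case $u=0$ is trivial, so assume $\lambda>0$. Applying the pointwise estimate at a.e.\ $x$ with $\rho=\mu$ and $t=|u(x)|/\mu$ and integrating over $\Omega$, we get, for every $\mu>0$,
$$\int_\Omega\Phi(x,|u(x)|)\,dx\le\max\{\mu^{\phi_0},\mu^{\phi^0}\}\int_\Omega\Phi(x,|u(x)|/\mu)\,dx,$$
together with the analogous lower bound obtained by replacing $\max$ with $\min$ and reversing the inequality. By the definition of the Luxemburg norm, $\int_\Omega\Phi(x,|u(x)|/\mu)\,dx\le1$ for every $\mu>\lambda$, and $\int_\Omega\Phi(x,|u(x)|/\mu)\,dx>1$ for every $\mu\in(0,\lambda)$ (as $\lambda$ is the infimum of those $\mu$ for which the modular does not exceed $1$, and $\mu\mapsto\int_\Omega\Phi(x,|u(x)|/\mu)\,dx$ is nonincreasing). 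Hence, taking $\mu>\lambda$ in the displayed inequality and letting $\mu\downarrow\lambda$ gives $\int_\Omega\Phi(x,|u(x)|)\,dx\le\max\{\lambda^{\phi_0},\lambda^{\phi^0}\}$, while taking $\mu\in(0,\lambda)$ in the lower bound and letting $\mu\uparrow\lambda$ gives $\int_\Omega\Phi(x,|u(x)|)\,dx\ge\min\{\lambda^{\phi_0},\lambda^{\phi^0}\}$; these passages to the limit are legitimate because $\mu\mapsto\min\{\mu^{\phi_0},\mu^{\phi^0}\}$ and $\mu\mapsto\max\{\mu^{\phi_0},\mu^{\phi^0}\}$ are continuous. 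The two resulting bounds are exactly the second inequality, which finishes the proof. I do not expect a serious obstacle here: the only place that asks for mild care is this last monotonicity and continuity bookkeeping for the Luxemburg norm, the real content being the elementary calculus identity for $(\ln g)'$ displayed above.
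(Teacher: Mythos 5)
Your proof is correct, and it is essentially the argument of the reference the paper invokes for this lemma (the paper itself gives no proof, citing \cite{Mihai}): one integrates the logarithmic-derivative bounds $\phi_0/\tau\le \partial_\tau\Phi(x,\tau)/\Phi(x,\tau)\le\phi^0/\tau$ coming from \eqref{delta2} to get the pointwise estimate, and then transfers it to the modular via the definition of the Luxemburg norm. The case analysis in $\rho$ and the monotonicity/limit bookkeeping for $\mu\to\lambda$ are handled correctly, so there is nothing to add.
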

Using the previous lemma we can  easily show the following result:
\begin{proposition}
	The function  $\Phi$  satisfies the $(\Delta_{2})$-condition, that is,  there exist a positive constant $C>0$ such that
	$$\Phi(x,2t)\leq C\Phi(x,t), \quad \mbox{ for } x\in \Omega \mbox{ and } t\geq0.$$
\end{proposition}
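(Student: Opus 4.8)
The statement to prove is that $\Phi(x,2t)\leq C\,\Phi(x,t)$ for all $x\in\Omega$ and $t\geq 0$, with $C$ independent of $x$ and $t$. The natural route is to apply Lemma~\ref{min-max for Phi} directly with the scaling factor $\rho=2$. Indeed, taking $\rho=2$ in the first inequality of that lemma gives $\Phi(x,2t)\leq\max\{2^{\phi_0},2^{\phi^0}\}\,\Phi(x,t)$ for every $x\in\Omega$ and $t\geq 0$. Since by hypothesis \eqref{delta2} we have $1<\phi_0\leq\phi^0<N$, the quantity $\max\{2^{\phi_0},2^{\phi^0}\}=2^{\phi^0}$ is a finite positive constant depending only on $\phi^0$, not on $x$ or $t$.

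So I would simply set $C:=2^{\phi^0}$ (or, being slightly wasteful, $C:=\max\{2^{\phi_0},2^{\phi^0}\}$) and conclude $\Phi(x,2t)\leq C\,\Phi(x,t)$. The case $t=0$ is trivial since $\Phi(x,0)=0$. This is essentially a one-line deduction once Lemma~\ref{min-max for Phi} is in hand; there is no real obstacle here, as the work has already been done in establishing \eqref{delta2} and the min-max lemma. If one wanted to be fully self-contained without invoking the lemma, one could instead integrate the bound $\phi(x,t)t^2\leq\phi^0\Phi(x,t)$ coming from \eqref{delta2}: writing $\Phi(x,2t)=\int_0^{2t}\phi(x,s)s\,ds$ and substituting $s=2\sigma$ gives $\Phi(x,2t)=4\int_0^{t}\phi(x,2\sigma)\sigma\,d\sigma$, and then a Gronwall-type argument on $g(t):=\Phi(x,2t)/\Phi(x,t)$ using the upper bound in \eqref{delta2} yields the same constant; but invoking the lemma is cleaner and is clearly the intended argument.

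The only point worth stating carefully is the uniformity in $x$: the constant $C$ must not depend on $x$, and this is guaranteed precisely because the bounds $\phi_0$ and $\phi^0$ in \eqref{delta2} are assumed uniform over $x\in\Omega$. I would therefore emphasize in the write-up that $C=\max\{2^{\phi_0},2^{\phi^0}\}$ works globally. The whole proof is two or three lines.
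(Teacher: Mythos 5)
Your proof is correct and follows exactly the route the paper intends: the paper gives no explicit proof, merely noting that the result follows ``easily'' from Lemma~\ref{min-max for Phi}, and your application of that lemma with $\rho=2$ to obtain $C=\max\{2^{\phi_0},2^{\phi^0}\}=2^{\phi^0}$, uniform in $x$ by \eqref{delta2}, is precisely that argument.
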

For the complementary function $\tilde{\Phi}$ we have the following analog lemma  (see \cite{Fukagai}).
\begin{lemma}\label{min-max for Phi tilde}
	Let $u\in L^{\tilde{\Phi}}(\Omega)$ and $\rho,t\geq 0,$ then we have
	\begin{equation}
	\min\left\{\rho^{({\phi_0})^{\prime}},\rho^{({\phi^0})^{\prime}}\right\}\tilde{\Phi}(x,t)\leq\tilde{\Phi}(x,\rho t) \leq \max\left\{\rho^{({\phi_0})^{\prime}},\rho^{({\phi^0})^{\prime}}\right\}\tilde{\Phi}(x,t),
	\end{equation}
	\begin{equation}
	\min\left\{\|u\|^{({\phi_0})^{\prime}}_{\tilde{\Phi}},\|u\|^{({\phi^0})^{\prime}}_{\tilde{\Phi}}\right\}\leq\int_{\Omega}\tilde{\Phi}(x,|u(x)|)dx \leq \max\left\{\|u\|^{({\phi_0})^{\prime}}_{\tilde{\Phi}},\|u\|^{({\phi^0})^{\prime}}_{\tilde{\Phi}}\right\},
	\end{equation}
	where $({\phi_0})^{\prime}=\frac{\phi_0}{\phi_0-1}$ and $({\phi^0})^{\prime}=\frac{\phi^0}{\phi^0-1}$.
\end{lemma}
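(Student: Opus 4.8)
The plan is to deduce Lemma~\ref{min-max for Phi tilde} from Lemma~\ref{min-max for Phi} by first showing that $\tilde{\Phi}$ obeys a structural bound of the same shape as (\ref{delta2}), with $\phi_0,\phi^0$ replaced by $(\phi^0)'$ and $(\phi_0)'$. Set $\psi(x,t):=\phi(x,t)t$; by $(\phi)$ the map $\psi(x,\cdot)$ is, for a.e.\ $x\in\Omega$, continuous, strictly increasing, with $\psi(x,0)=0$ and $\psi(x,t)\to+\infty$, hence it has a continuous strictly increasing inverse $\psi^{-1}(x,\cdot):[0,+\infty)\to[0,+\infty)$. Since $\partial_t\Phi(x,t)=\psi(x,t)$, the Legendre-transform computation of $\tilde{\Phi}(x,t)=\sup_{\sigma>0}\bigl(t\sigma-\Phi(x,\sigma)\bigr)$ shows the supremum is attained at $\sigma=\psi^{-1}(x,t)$, so that $\partial_t\tilde{\Phi}(x,t)=\psi^{-1}(x,t)$ and, by the equality case of Young's inequality, $\tilde{\Phi}(x,\psi(x,s))=s\,\psi(x,s)-\Phi(x,s)$ for every $s\ge0$. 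Writing $t=\psi(x,s)$ and combining these identities,
\[
\frac{t\,\partial_t\tilde{\Phi}(x,t)}{\tilde{\Phi}(x,t)}=\frac{s\,\psi(x,s)}{s\,\psi(x,s)-\Phi(x,s)}=\Big(1-\frac{\Phi(x,s)}{s^{2}\phi(x,s)}\Big)^{-1},
\]
and since (\ref{delta2}) gives $\tfrac1{\phi^0}\le\tfrac{\Phi(x,s)}{s^2\phi(x,s)}\le\tfrac1{\phi_0}$, we get
\[
(\phi^0)'=\frac{\phi^0}{\phi^0-1}\ \le\ \frac{t\,\partial_t\tilde{\Phi}(x,t)}{\tilde{\Phi}(x,t)}\ \le\ \frac{\phi_0}{\phi_0-1}=(\phi_0)',\qquad x\in\Omega,\ t>0,
\]
which is precisely (\ref{delta2}) for $\tilde{\Phi}$ with the exponent pair $\bigl((\phi^0)',(\phi_0)'\bigr)$ (both $>1$, since $\phi^0\ge\phi_0>1$).

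Granting this, the two stated inequalities follow by running the proof of Lemma~\ref{min-max for Phi} (i.e.\ \cite[Proposition 2.1]{Mihai}) with $\Phi$, $\phi_0$, $\phi^0$ replaced by $\tilde{\Phi}$, $(\phi^0)'$, $(\phi_0)'$. In outline: for fixed $x$ and $t>0$, put $g(\rho):=\tilde{\Phi}(x,\rho t)$; then $\rho\,g'(\rho)/g(\rho)=\dfrac{(\rho t)\,\partial_t\tilde{\Phi}(x,\rho t)}{\tilde{\Phi}(x,\rho t)}\in[(\phi^0)',(\phi_0)']$, so integrating $a/\rho\le(\log g)'(\rho)\le b/\rho$ (with $a=(\phi^0)'$, $b=(\phi_0)'$) over $[1,\rho]$ when $\rho\ge1$ and over $[\rho,1]$ when $0<\rho\le1$, and handling $\rho=0$ and $t=0$ trivially, yields the first inequality
\[
\min\{\rho^{(\phi_0)'},\rho^{(\phi^0)'}\}\,\tilde{\Phi}(x,t)\le\tilde{\Phi}(x,\rho t)\le\max\{\rho^{(\phi_0)'},\rho^{(\phi^0)'}\}\,\tilde{\Phi}(x,t).
\]
Taking $\rho=2$ here shows $\tilde{\Phi}$ also satisfies the $(\Delta_2)$-condition, hence for $u\in L^{\tilde{\Phi}}(\Omega)$, $u\ne0$, and $\lambda:=\|u\|_{\tilde{\Phi}}$ one has $\int_\Omega\tilde{\Phi}(x,|u(x)|/\lambda)\,dx=1$; applying the pointwise bound with $\rho=\lambda$ and $t=|u(x)|/\lambda$ and integrating over $\Omega$ gives the second inequality, the case $u=0$ being trivial.

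The crux is the first paragraph: one must check carefully that $(\phi)$ really makes $\psi(x,\cdot)$ a bijection of $[0,+\infty)$ and that the Legendre identities $\partial_t\tilde{\Phi}(x,\cdot)=\psi^{-1}(x,\cdot)$ and $\tilde{\Phi}(x,\psi(x,s))=s\psi(x,s)-\Phi(x,s)$ hold for a.e.\ $x$ — this is exactly where the full strength of $(\phi)$ (strict monotonicity of $\phi(x,t)t$, vanishing at $0$, blow-up at $+\infty$) enters. After that, nothing is new: the derivation of the two inequalities is a line-by-line transcription of the proof of Lemma~\ref{min-max for Phi}, and the $(\Delta_2)$ remark reproduces the proposition preceding the present lemma. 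Alternatively, the statement can simply be quoted from \cite{Fukagai}, where it is established in this generality.
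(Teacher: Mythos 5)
Your argument is correct, but it is genuinely different from what the paper does: the paper offers no proof of this lemma at all, simply quoting it from \cite{Fukagai} as the "analog" of Lemma \ref{min-max for Phi} (just as Lemma \ref{min-max for Phi} itself is quoted from \cite[Proposition 2.1]{Mihai}). What you supply instead is the standard self-contained derivation: the Young/Legendre identities $\partial_t\tilde{\Phi}(x,\cdot)=(\phi(x,\cdot)\,\cdot)^{-1}$ and $\tilde{\Phi}(x,\psi(x,s))=s\psi(x,s)-\Phi(x,s)$, which convert the two-sided bound (\ref{delta2}) for $\Phi$ into the dual bound $(\phi^0)'\leq t\,\partial_t\tilde{\Phi}(x,t)/\tilde{\Phi}(x,t)\leq(\phi_0)'$, after which the min--max inequalities and the modular--norm comparison follow by the same integration of $(\log g)'$ used for $\Phi$. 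The computation checks out (in particular the denominator $s\psi(x,s)-\Phi(x,s)$ is positive because $\phi_0>1$, and the order reversal $(\phi^0)'\leq(\phi_0)'$ is handled correctly by the $\min/\max$ in the statement), and the hypotheses $(\phi)$ do guarantee that $\psi(x,\cdot)=\phi(x,\cdot)t$ is a continuous strictly increasing bijection of $[0,+\infty)$ with the stated inverse. The only caveats are minor: the differentiability of $\tilde{\Phi}(x,\cdot)$ and the identity $\int_\Omega\tilde{\Phi}(x,|u|/\|u\|_{\tilde{\Phi}})\,dx=1$ for $u\neq0$ deserve a word of justification (the latter uses the $(\Delta_2)$-property you derive and the continuity of the modular in $\lambda$), but both are standard. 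In short, your proof buys a transparent explanation of where the conjugate exponents $(\phi_0)'$, $(\phi^0)'$ come from, at the cost of redoing work the paper outsources to the literature; your closing remark that one may simply cite \cite{Fukagai} is exactly the paper's route.
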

\begin{remark}
	From Lemma \ref{min-max for Phi tilde},    the complementary function $\tilde{\Phi}$ also satisfies $(\Delta_{2})$-condition.
\end{remark}
 Since both $\Phi$ and $\tilde{\Phi}$ satisfy the $(\Delta_{2})$-condition, then we have the following result:
\begin{proposition}[\cite{P. Zhao}] \label{proposition of Phi}
	The following assertions hold:
\begin{enumerate}
	\item $L^{\Phi}(\Omega)=\{u : u: \Omega \to \mathbb{R} \mbox{ is measurable, and }  \int_{\Omega}\Phi\left(x, |u(x)|\right)dx < \infty\}$
	\item For any sequence $(u_n)$ in $L^{\Phi}(\Omega)$, we have
	\begin{enumerate}
		\item[a)] $\int_{\Omega}\Phi\left(x, |u_{n}(x)|\right)dx \to 0 (\mbox{resp. } 1; +\infty)\Leftrightarrow \|u_n\|_{\Phi}\to 0 (\mbox{resp. } 1; +\infty),$
		\item[b)] $u_{n}\to u$ in $L^{\Phi}(\Omega)\Rightarrow \int_{\Omega}|\Phi\left(x, |u_{n}(x)|\right)-\Phi\left(x, |u(x)|\right)|dx\to 0 $ as $n\to +\infty$.
	\end{enumerate}
	\item Let $u\in L^{\Phi}(\Omega) $ and $v\in L^{\tilde{\Phi}}(\Omega)$. Then the H\"{o}lder type inequality holds true
	$$ \left|\int_{\Omega} u(x)v(x)dx\right|\leq 2\|u\|_{\Phi} \|v\|_{\tilde{\Phi}}.$$
	\item $\phi(x,|u(x)|)u(x)\in L^{\tilde{\Phi}}(\Omega)$ provided that $u\in L^{\Phi}(\Omega)$.
\end{enumerate}	
\end{proposition}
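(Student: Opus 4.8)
The plan is to obtain all four assertions from the two-sided estimates of Lemma \ref{min-max for Phi} and Lemma \ref{min-max for Phi tilde}, the growth condition \eqref{delta2}, the completeness of the Musielak--Orlicz spaces, and the Young inequality; no tools beyond those already recorded above are needed. Only assertion (2)(b) requires a genuine argument; the other three are essentially bookkeeping around the min--max lemmas.

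\emph{Assertions (1) and (2)(a).} The inclusion $\supseteq$ in (1) is immediate (take $\lambda=1$). Conversely, if $u\in L^{\Phi}(\Omega)$ and $\lambda>0$ satisfies $\int_{\Omega}\Phi(x,|u(x)|/\lambda)\,dx<\infty$, then writing $\Phi(x,|u(x)|)=\Phi(x,\lambda\cdot|u(x)|/\lambda)$ and applying Lemma \ref{min-max for Phi} with $\rho=\lambda$ gives $\int_{\Omega}\Phi(x,|u(x)|)\,dx\le\max\{\lambda^{\phi_0},\lambda^{\phi^0}\}\int_{\Omega}\Phi(x,|u(x)|/\lambda)\,dx<\infty$, which proves (1); the same characterization applied to $\tilde{\Phi}$ (legitimate since $\tilde{\Phi}$ also satisfies $(\Delta_2)$) will be used in (4). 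For (2)(a), the second estimate of Lemma \ref{min-max for Phi} reads
$$\min\{\|u_n\|_{\Phi}^{\phi_0},\|u_n\|_{\Phi}^{\phi^0}\}\le\int_{\Omega}\Phi(x,|u_n(x)|)\,dx\le\max\{\|u_n\|_{\Phi}^{\phi_0},\|u_n\|_{\Phi}^{\phi^0}\};$$
since $s\mapsto s^{\phi_0}$ and $s\mapsto s^{\phi^0}$ are continuous, strictly increasing, and fix $0$ and $1$, this squeezes $\int_{\Omega}\Phi(x,|u_n|)\,dx$ to $0$ (resp.\ $1$, $+\infty$) whenever $\|u_n\|_{\Phi}\to0$ (resp.\ $1$, $+\infty$), and the converse follows by a routine contradiction argument: a subsequence with $\|u_n\|_{\Phi}$ bounded away from the target value would, by the same two-sided estimate, force $\int_{\Omega}\Phi(x,|u_n|)\,dx$ away from it as well.

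\emph{Assertion (2)(b).} This is the step I expect to cost the most care, since it is not a direct consequence of the min--max estimates. I would argue by subsequences: if the claim failed there would be $\delta>0$ and a subsequence with $\int_{\Omega}|\Phi(x,|u_n|)-\Phi(x,|u|)|\,dx\ge\delta$. Using $\|u_n-u\|_{\Phi}\to0$ and the completeness of $L^{\Phi}(\Omega)$, extract a further subsequence $(u_{n_k})$ by the standard telescoping construction (choosing indices with $\|u_{n_{k+1}}-u_{n_k}\|_{\Phi}\le2^{-k}$) so that $u_{n_k}\to u$ a.e.\ and $|u_{n_k}|\le G$ a.e.\ for some $G\in L^{\Phi}(\Omega)$; in particular $|u|\le G$ a.e.\ as well. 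Then $\Phi(x,|u_{n_k}(x)|)\to\Phi(x,|u(x)|)$ a.e., while $|\Phi(x,|u_{n_k}(x)|)-\Phi(x,|u(x)|)|\le2\Phi(x,G(x))$, which lies in $L^1(\Omega)$ by assertion (1). The dominated convergence theorem then gives $\int_{\Omega}|\Phi(x,|u_{n_k}|)-\Phi(x,|u|)|\,dx\to0$, contradicting the choice of $\delta$; this is the main obstacle of the proof.

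\emph{Assertions (3) and (4).} For (3), assume $\|u\|_{\Phi}>0$ and $\|v\|_{\tilde{\Phi}}>0$ (otherwise $uv=0$ a.e.) and set $\hat u=u/\|u\|_{\Phi}$, $\hat v=v/\|v\|_{\tilde{\Phi}}$, so that $\int_{\Omega}\Phi(x,|\hat u|)\,dx\le1$ and $\int_{\Omega}\tilde{\Phi}(x,|\hat v|)\,dx\le1$ by definition of the Luxemburg norm. Integrating the Young inequality $|\hat u(x)\hat v(x)|\le\Phi(x,|\hat u(x)|)+\tilde{\Phi}(x,|\hat v(x)|)$ over $\Omega$ yields $\int_{\Omega}|\hat u\hat v|\,dx\le2$, whence $|\int_{\Omega}uv\,dx|\le\|u\|_{\Phi}\|v\|_{\tilde{\Phi}}\int_{\Omega}|\hat u\hat v|\,dx\le2\|u\|_{\Phi}\|v\|_{\tilde{\Phi}}$. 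For (4), the key point is the Young equality: since $\partial_t\Phi(x,t)=\phi(x,t)t$, Legendre duality (the supremum defining $\tilde{\Phi}(x,\cdot)$ at the value $\phi(x,t)t$ being attained at $s=t$ by strict monotonicity of $s\mapsto\phi(x,s)s$) gives $\tilde{\Phi}(x,\phi(x,t)t)=\phi(x,t)t^2-\Phi(x,t)$ for every $t\ge0$ and a.e.\ $x$. Combining this with the upper bound $\phi(x,t)t^2\le\phi^0\Phi(x,t)$ from \eqref{delta2} gives $\tilde{\Phi}(x,\phi(x,|u(x)|)|u(x)|)\le(\phi^0-1)\Phi(x,|u(x)|)$ with $\phi^0-1>0$; integrating over $\Omega$ and invoking assertion (1) for $u\in L^{\Phi}(\Omega)$ shows $\int_{\Omega}\tilde{\Phi}(x,|\phi(x,|u|)u|)\,dx\le(\phi^0-1)\int_{\Omega}\Phi(x,|u|)\,dx<\infty$, so the characterization of $L^{\tilde{\Phi}}(\Omega)$ yields $\phi(x,|u|)u\in L^{\tilde{\Phi}}(\Omega)$, completing the proof.
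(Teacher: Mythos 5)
Your proof is correct. Note, however, that the paper does not prove Proposition \ref{proposition of Phi} at all: it is imported verbatim from \cite{P. Zhao} (these are in turn standard facts about Musielak--Orlicz spaces under the $(\Delta_2)$-condition), so there is no in-paper argument to compare against. Your self-contained derivation is sound and uses exactly the tools the paper has already set up: Lemma \ref{min-max for Phi} for (1) and the squeeze in (2)(a), the Young (in)equality for (3) and (4), and in (4) the Legendre duality identity $\tilde{\Phi}(x,\phi(x,t)t)=\phi(x,t)t^{2}-\Phi(x,t)$ combined with the upper bound in (\ref{delta2}) to get $\tilde{\Phi}(x,\phi(x,t)t)\leq(\phi^{0}-1)\Phi(x,t)$, which is the cleanest route to assertion (4). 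The only step worth making explicit in (2)(b) is the claim that norm convergence in $L^{\Phi}(\Omega)$ yields an a.e.\ convergent, $L^{\Phi}$-dominated subsequence: the telescoping construction needs the partial sums $\sum_{j}|u_{n_{j+1}}-u_{n_j}|$ to be finite a.e., which follows from convergence in measure, i.e.\ from a lower bound such as $\inf_{x}\Phi(x,\varepsilon)>0$; this is supplied by $(\phi_{1})$ (hence by $(\phi_{2})$, part of the standing assumption $(\Phi)$), so it is available but should be cited rather than taken for granted, since the proposition is stated in the preliminaries before those hypotheses are imposed. With that one reference added, the argument is complete.
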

Let $\Phi, \Psi\in N(\Omega)$. We say that  $\Phi$ is weaker than $\Psi$, and denote $\Phi\preccurlyeq \Psi$, if there exist  positive constants $K_1, K_2$ and a nonnegative function $h\in L^{1}(\Omega)$ such that
$$\Phi(x,t)\leq K_{1}\Psi(x,K_{2}t)+h(x), \quad \mbox{for } x\in \Omega \mbox{ and } t\geq 0.$$
By Theorem 8.5 in \cite{Musielak},   the following  embeddings
$$L^{\Psi}(\Omega)\hookrightarrow L^{\Phi}(\Omega) \mbox{ and } L^{\tilde{\Phi}}(\Omega) \hookrightarrow L^{\tilde{\Psi}}(\Omega)$$ are continuous provided that  $\Phi, \Psi$ are  such that  $\Phi\preccurlyeq \Psi$.

  We say that $\Phi \in N(\Omega)$ is  locally integrable if $\Phi(.,t_{0})\in L^{1}(\Omega)$ for every $t_{0}>0$. We point out that when  $\Phi$ is locally integrable, then $(L^{\Phi}(\Omega),\|\cdot\|_{\Phi} )$ is a separable Banach space (see \cite{Hudzik, Musielak}).

In this paper, we shall need the following assumptions.
\begin{enumerate}
	\item[$(\phi_{1})$] $\inf_{x\in \Omega}\Phi(x,1)=c_{1}>0.$
     \item[$(\phi_{2})$] For every $t_{0}>0$ there exists $c=c(t_0)>0$ such that
     $$\frac{\Phi(x,t)}{t}\geq c, \quad \mbox{and}\quad \frac{\tilde{\Phi}(x,t)}{t}\geq c\quad \mbox{for } x\in \Omega \mbox{ and } t\geq t_0. $$
\end{enumerate}
We note that, $(\phi_{2})\Rightarrow (\phi_{1})$. Moreover,  in the case where $\Phi$ is independent of $x$,  $(\phi_{1})$ and $(\phi_{2})$ hold automatically and $\Phi$ is automatically locally integrable.

By assumption $(\phi_{1})$, we have the following embeddings
 $$L^{\Phi}(\Omega) \hookrightarrow L^{1}(\Omega)\quad \mbox{and}\quad W^{1,\Phi}(\Omega) \hookrightarrow W^{1,1}(\Omega) .$$

In addition, by  assuming   $\Phi$ and $\tilde{\Phi}$    both locally integrable and  satisfy $(\phi_{2})$, we conclude that   $L^{\Phi}(\Omega)$ is reflexive, and that the mapping $J : L^{\tilde{\Phi}}(\Omega)\to (L^{\Phi}(\Omega))^{\ast} $ defined by
$$\langle J(v),w \rangle=\int_{\Omega}v(x)w(x)dx, \quad \forall v\in L^{\tilde{\Phi}}(\Omega), \forall w\in L^{\Phi}(\Omega) $$
is a linear isomorphism and $\|J(v)\|_{(L^{\Phi}(\Omega))^{\ast} }\leq 2\|v\|_{L^{\tilde{\Phi}}(\Omega)}$ (see \cite[p. 189]{Musielak}.)

We  denote by $W_{0}^{1,\Phi}(\Omega)$ the closure of $\mathcal{C}_{0}^{\infty}(\Omega)$ in $W^{1,\Phi}(\Omega)$ and by $\mathcal{D}^{1,\Phi}_{0}(\Omega)$ the completion of $\mathcal{C}_{0}^{\infty}(\Omega)$ in the norm $\|\nabla u\|_{\Phi}$. It is clear that  $W_{0}^{1,\Phi}(\Omega)=\mathcal{D}^{1,\Phi}_{0}(\Omega)$ in the case where $\|\nabla u\|_{\Phi}$ is an equivalent norm in $W_{0}^{1,\Phi}(\Omega)$ .

By assuming   $\Phi$  locally integrable and satisfies $(\phi_{1})$,    $W^{1,\Phi}(\Omega), W_{0}^{1,\Phi}(\Omega)$ and $\mathcal{D}^{1,\Phi}_{0}(\Omega)$ are clearly separable Banach spaces, and we have
\begin{align*}
W_{0}^{1,\Phi}(\Omega)&\hookrightarrow W^{1,\Phi}(\Omega)\hookrightarrow W^{1,1}(\Omega)\\
\mathcal{D}^{1,\Phi}_{0}(\Omega)&\hookrightarrow \mathcal{D}^{1,1}_{0}(\Omega)=W_{0}^{1,1}(\Omega).
\end{align*}
In addition, these spaces are reflexive if $L^{\Phi}(\Omega)$ is reflexive.

In this work, we need to use some standard   tools such as the Poincaré inequality and   results of compactness for embeddings in  Musielak-Sobolev spaces  . For this reason, we shall suppose the following   supplementary  assumptions on $\Phi$.
\begin{enumerate}
	\item[($H_{1})$] $\Omega\subset \mathbb{R}^{N} (N\geq 2)$ is a bounded domain with the cone property, $\Phi\in N(\Omega)$.
     \item[$(H_2)$]  $\Phi : \overline{\Omega}\times [0,+\infty)\to [0,+\infty)$ is continuous and $\Phi(x,t)\in (0,+\infty)$ for $x\in\overline{\Omega}$ and $t\in(0,+\infty)$.
    \end{enumerate}
     Now, let $\Phi$ satisfies $(H_1)$ and $(H_2)$. Then, for each $x\in \overline{\Omega}$, the function $\Phi(x,\cdot):[0,+\infty)\to [0,+\infty) $  is a  strictly increasing homeomorphism.  Denote by $\Phi^{-1}(x,\cdot)$ the inverse function of $\Phi(x, \cdot)$. We also assume  the following condition.
     \begin{enumerate}
     	\item[$(H_3)$]
     	$$ \int_{0}^{1}\frac{\Phi^{-1}(x,t)}{t^{\frac{N+1}{N}}}dt<+\infty, \;\forall x\in \overline{\Omega}. $$
     \end{enumerate}
  Define the function $\Phi^{-1}_{\ast} : \overline{\Omega}\times [0,+\infty)\to [0,+\infty) $ by
  \begin{equation}
  \Phi^{-1}_{\ast}(x,s)=\int_{0}^{s}\frac{\Phi^{-1}(x,\tau)}{\tau^{\frac{N+1}{N}}}d\tau, \; \mbox{ for } x\in \overline{\Omega} \;\mbox{ and }\; s\in [0,+\infty).
  \end{equation}
  Then, by   assumption $(H_3)$, $\Phi^{-1}_{\ast}$  is well defined, and for each $x\in \overline{\Omega}$,  $\Phi^{-1}_{\ast}(x,\cdot)$  is strictly increasing, $\Phi^{-1}_{\ast}(x,\cdot)\in \mathcal{C}^{1}((0,+\infty))$ and the function $\Phi^{-1}_{\ast}(x,\cdot)$ is concave.

  Set $T(x)=\lim\limits_{s\to +\infty}\Phi^{-1}_{\ast}(x,s),$ for all $ x\in \overline{\Omega}$. Then, $T(x)\in (0,+\infty]$. Define the function $\Phi_{\ast} : \overline{\Omega}\times [0,+\infty)\to [0,+\infty)$ by
  $$\Phi_{\ast}(x,t)=\begin{cases}
  s,& \mbox{ if }\; x\in \overline{\Omega},\; t\in [0,T(x))\; \mbox{ and }\; \Phi^{-1}_{\ast}(x,s)=t\\
  +\infty, & \mbox{ if }\; x\in \overline{\Omega},\; t\geq T(x).
  \end{cases}
  $$
  Then $\Phi_{\ast} \in N(\Omega),$ and for each $x\in \overline{\Omega}$, $\Phi_{\ast}(x,\cdot)\in \mathcal{C}^{1}((0,T(x)))$. $\Phi_{\ast}$ is called the Sobolev conjugate function of $\Phi$.

  Let $X$ be a metric space and $f : X\to (-\infty, +\infty]$ be an extended real-valued function. For $x\in X$  with $f(x)\in\mathbb{R}$, the continuity of $f$ at $x$ is well defined. Now, for $x\in X$ with $f(x)=+\infty$, we say that $f$ is continuous at $x$ if given any $M>0$, there exists a neighborhood $U$ of $x$ such that $f(y)>M$ for all $y\in U$. We say that $f : X\to (-\infty, +\infty]$ is continuous on $X$ if $f$ is continuous at every $x\in X$. Define $Dom(f)=\{x\in X : f(x)\in \mathbb{R}\}$ and denote by $\mathcal{C}^{1-0}(X)$ the set of all locally Lipschitz continuous real-valued functions defined on $X$.
  \begin{remark}\label{bounded of K}
  	Suppose that $\Phi$ satisfies $(H_2)$.  Then, for each  $t_0\geq 0$, $\tilde{\Phi}(x,t_0)$ and  $\Phi_{\ast}(x,t_0)$ are bounded.
  \end{remark}

  Concerning the function $\Phi_{\ast}$ and the operator $T$ , we suppose that  
  \begin{enumerate}
  	\item[$(H_4)$] $T : \overline{\Omega} \to [0,+\infty]$ is continuous on $\overline{\Omega}$ and $T\in \mathcal{C}^{1-0}(Dom(T))$;
  	\item[$(H_5)$] $\Phi_{\ast} \in \mathcal{C}^{1-0}(Dom(\Phi_{\ast}))$ and there exist positive constants  $C_0, \delta_0<\frac{1}{N}$ and $t_0\in (0,\min_{x\in \overline{\Omega}}T(x))$ such that
  	$$|\nabla_{x}\Phi_{\ast}(x,t)|\leq C_{0}(\Phi_{\ast}(x,t))^{1+\delta_0},$$
  	for $x\in \Omega$ and $t\in [t_0, T(x))$ provided that $\nabla_{x}\Phi_{\ast}(x,t)$ exists.\\
  \end{enumerate}

  \begin{remark}
  Examples of generalized $N$-function $\Phi$ satifying the above assumptions and  covering the case of   variable exponent space,  double-phase space, and variable exponent double-phase space, are given in \cite{P. Zhao 2}.
  \end{remark}

  Let $\Phi, \Psi\in N(\Omega)$. We say that  $\Phi$ essentially grows more slowly that  $\Psi$ and we write $\Phi\ll \Psi$, if  for any $k>0,$
  $$\lim\limits_{t\to +\infty}\frac{\Phi(x,kt)}{\Psi(x,t)}=0 \mbox{ uniformly for } x\in \Omega.$$
  Obviously, if $\Phi \ll \Psi$ then $ \Phi \preccurlyeq \Psi$.

  Now, we recall the following embedding theorems for Musielak-Sobolev spaces (see \cite{X. Fan,P. Zhao}).
  \begin{theorem}\label{compact embedding theorem1 }
  	Assume $(H_1)$-$(H_5)$ hold. Then
  	\begin{enumerate}
  	\item There is a continuous embedding $W^{1,\Phi}(\Omega)\hookrightarrow L^{\Phi_{\ast}}(\Omega)$.
  		\item Suppose that $\Psi\in N(\Omega)$, $\Psi : \overline{\Omega}\times [0,+\infty)\to [0,+\infty) $ is continuous, and $\Psi(x,t)\in (0,+\infty)$ for $x\in \Omega$ and $t\in(0,+\infty)$. If $\Psi\ll \Phi_{\ast}$. Then,  the embedding $W^{1,\Phi}(\Omega)\hookrightarrow \hookrightarrow L^{\Psi}(\Omega)$ is compact.
  	\end{enumerate}
  \end{theorem}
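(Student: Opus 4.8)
The plan is to follow the classical scheme of the Sobolev embedding theorem, keeping the $x$-dependence of $\Phi$ under control by means of the structural hypotheses $(H_{3})$--$(H_{5})$.

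For assertion (1), I would first use the cone property of $\Omega$ to obtain, for every $u$ in a dense subclass of $W^{1,\Phi}(\Omega)$ and a.e.\ $x\in\Omega$, the pointwise bound $|u(x)|\leq c\int_{\Omega}|\nabla u(y)|\,|x-y|^{1-N}\,dy=:c\,I_{1}(|\nabla u|)(x)$, obtained by averaging the fundamental-theorem-of-calculus estimate over a finite cone contained in $\Omega$ with vertex $x$; a density argument (legitimate because $\Phi$ satisfies the $(\Delta_{2})$-condition) then reduces (1) to the boundedness of the Riesz potential $I_{1}:L^{\Phi}(\Omega)\to L^{\Phi_{\ast}}(\Omega)$. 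To prove that, I would normalize $g:=|\nabla u|$ so that $\int_{\Omega}\Phi(x,|g|)\,dx\leq1$ and use a Hedberg-type splitting $I_{1}g(x)=A_{\rho}(x)+B_{\rho}(x)$ at radius $\rho>0$, where $A_{\rho}$ is the contribution of $\{|x-y|<\rho\}$. A dyadic decomposition bounds $A_{\rho}(x)$ by $C\rho\,\mathcal{M}g(x)$, where $\mathcal{M}$ is the Hardy--Littlewood maximal operator, which is bounded on $L^{\Phi}(\Omega)$ since $1<\phi_{0}\leq\phi^{0}<N$ in \eqref{delta2} forces both $\Phi$ and $\tilde{\Phi}$ to satisfy $(\Delta_{2})$ (Lemmas \ref{min-max for Phi} and \ref{min-max for Phi tilde}), the necessary $x$-regularity being supplied by $(H_{4})$--$(H_{5})$. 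For $B_{\rho}$, the H\"older inequality in Musielak--Orlicz spaces (Proposition \ref{proposition of Phi}(3)) gives $B_{\rho}(x)\leq 2\,\big\|\,|x-\cdot|^{1-N}\mathbf{1}_{\{|x-\cdot|\geq\rho\}}\big\|_{\tilde{\Phi}}\,\|g\|_{\Phi}$, and a direct computation of the Luxemburg norm of the truncated kernel shows it is controlled by $\Phi_{\ast}^{-1}(x,\rho^{-N})$ --- and this is where the convergence of the integral defining $\Phi_{\ast}^{-1}$, i.e.\ assumption $(H_{3})$, enters. Optimizing $|I_{1}g(x)|\lesssim\rho\,\mathcal{M}g(x)+\Phi_{\ast}^{-1}(x,\rho^{-N})\,\|g\|_{\Phi}$ over $\rho>0$ and inverting through the definition of $\Phi_{\ast}$ should yield a pointwise estimate of the form $\Phi_{\ast}\big(x,c\,|I_{1}g(x)|\big)\leq C\big(\Phi(x,|g(x)|)+\mathcal{M}[\Phi(\cdot,|g|)](x)\big)$; integrating over $\Omega$ and using the $L^{1}$-boundedness of $\mathcal{M}$ on the normalized data gives $\int_{\Omega}\Phi_{\ast}(x,c|I_{1}g|)\,dx\leq C$, hence $\|I_{1}g\|_{\Phi_{\ast}}\leq C\|g\|_{\Phi}$, which together with the density argument proves (1). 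I expect this potential estimate to be the main obstacle: in the genuinely $x$-dependent Musielak setting the kernel and maximal-function bounds are no longer translation invariant, and it is exactly $(H_{3})$--$(H_{5})$ that keep the dyadic computations uniform in $x\in\overline{\Omega}$.

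For assertion (2), I would take a bounded sequence $(u_{n})$ in $W^{1,\Phi}(\Omega)$. By $(\phi_{1})$ one has $W^{1,\Phi}(\Omega)\hookrightarrow W^{1,1}(\Omega)$, so $(u_{n})$ is bounded in $W^{1,1}(\Omega)$, and the classical Rellich--Kondrachov theorem yields a subsequence (not relabeled) and $u\in L^{1}(\Omega)$ with $u_{n}\to u$ in $L^{1}(\Omega)$ and a.e.\ in $\Omega$; by part (1) the sequence is also bounded in $L^{\Phi_{\ast}}(\Omega)$, so $u\in L^{\Phi_{\ast}}(\Omega)$ and $M:=\sup_{n}\|u_{n}-u\|_{\Phi_{\ast}}<\infty$. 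Writing $v_{n}:=u_{n}-u$ and fixing $\lambda,\eta>0$, the relation $\Psi\ll\Phi_{\ast}$ provides $t_{\eta}>0$ with $\Psi(x,\lambda M s)\leq\eta\,\Phi_{\ast}(x,s)$ for all $s\geq t_{\eta}$ and $x\in\Omega$, while the continuity of $\Psi$ on $\overline{\Omega}\times[0,+\infty)$ bounds $\Psi$ on the complementary compact range; hence $\Psi(x,\lambda|v_{n}|)\leq\eta\,\Phi_{\ast}(x,|v_{n}|/M)+A_{\eta}\,\mathbf{1}_{\{|v_{n}|\leq Mt_{\eta}\}}$. Since $\|v_{n}\|_{\Phi_{\ast}}\leq M$ we have $\int_{\Omega}\Phi_{\ast}(x,|v_{n}|/M)\,dx\leq1$, and since $v_{n}\to0$ in $L^{1}(\Omega)$ it converges to $0$ in measure, so $|\{|v_{n}|>\delta\}|\to0$ for every $\delta>0$. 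Splitting $\Omega$ into $\{|v_{n}|\leq\delta\}$ and $\{|v_{n}|>\delta\}$ and using that $\sup_{x\in\overline{\Omega},\,t\leq\lambda\delta}\Psi(x,t)\to0$ as $\delta\to0$ gives $\limsup_{n}\int_{\Omega}\Psi(x,\lambda|v_{n}|)\,dx\leq\eta$; as $\eta>0$ is arbitrary, $\int_{\Omega}\Psi(x,\lambda|v_{n}|)\,dx\to0$ for every $\lambda>0$, that is, $\|u_{n}-u\|_{\Psi}\to0$. Since the limit is independent of the chosen subsequence, the whole sequence converges in $L^{\Psi}(\Omega)$, which proves (2).
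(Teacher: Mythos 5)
First, note that the paper does not prove this theorem at all: it is recalled verbatim from the cited references (Fan, \emph{An imbedding theorem for Musielak--Sobolev spaces}, and Liu--Zhao), so the comparison below is against the argument in those sources.

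Your part (2) is essentially the standard argument and is sound: boundedness in $W^{1,1}(\Omega)$ via $(\phi_1)$, Rellich--Kondrachov to get an a.e.\ convergent subsequence, the modular bound $\int_\Omega\Phi_\ast(x,|v_n|/M)\,dx\leq1$ from part (1), and then $\Psi\ll\Phi_\ast$ together with continuity of $\Psi$ and convergence in measure to force $\int_\Omega\Psi(x,\lambda|v_n|)\,dx\to0$ for every $\lambda$. This is the same Vitali-type scheme used in the cited proof.

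Part (1) is where there is a genuine gap. Your route (pointwise bound by the Riesz potential $I_1$, Hedberg splitting, Hardy--Littlewood maximal function) hinges on the claim that $\mathcal{M}$ is bounded on $L^{\Phi}(\Omega)$ ``since $\Phi$ and $\tilde\Phi$ satisfy $(\Delta_2)$, the necessary $x$-regularity being supplied by $(H_4)$--$(H_5)$.'' This does not hold as stated. In the genuinely $x$-dependent setting, $(\Delta_2)$ plus the index condition \eqref{delta2} is far from sufficient for maximal-function bounds: already for $\Phi(x,t)=t^{p(x)}$ one needs log-H\"older continuity of $p$ (or the (A0)--(A2) conditions in the general Musielak--Orlicz theory), and a merely continuous exponent can make $\mathcal{M}$ unbounded. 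The hypotheses $(H_4)$--$(H_5)$ are Lipschitz/growth conditions on $T$ and on $\nabla_x\Phi_\ast$, not on $\Phi$ itself, and they are not designed to (and do not) yield the maximal inequality; consequently the subsequent pointwise estimate $\Phi_\ast(x,c|I_1g(x)|)\lesssim\Phi(x,|g(x)|)+\mathcal{M}[\Phi(\cdot,|g|)](x)$ is also unjustified, since the optimization over $\rho$ requires a pointwise-in-$x$ compatibility between $\Phi$, $\Phi_\ast$ and $\mathcal{M}$ that is not available here. The cited proof avoids the maximal operator entirely: it follows the Donaldson--Trudinger scheme, setting $w=\bigl(\Phi_\ast(x,|u(x)|)\bigr)^{\frac{N-1}{N}}$ for $u$ normalized in $W^{1,\Phi}$, applying the elementary embedding $W^{1,1}(\Omega)\hookrightarrow L^{\frac{N}{N-1}}(\Omega)$ to $w$, and controlling $\nabla w$ by the chain rule --- the term $\partial_t\Phi_\ast\cdot|\nabla u|$ is handled by Young's inequality and the definition of $\Phi_\ast$ via $(H_3)$, while the term $\nabla_x\Phi_\ast$ is exactly what $(H_4)$--$(H_5)$ (with $\delta_0<\tfrac1N$) are there to absorb. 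If you want to keep a potential-theoretic proof, you would have to add explicit hypotheses guaranteeing the boundedness of $\mathcal{M}$ on $L^\Phi(\Omega)$, which would make the theorem strictly less general than the one the paper invokes.
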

In particular, as $\Phi\ll \Phi_{\ast}$ then we have
\begin{theorem}\label{compact embedding theorem2}
		Assume $(H_1)$-$(H_5)$ hold. Then
	\begin{enumerate}
	  \item  The embedding $W^{1,\Phi}(\Omega)\hookrightarrow \hookrightarrow L^{\Phi}(\Omega)$ is compact .
	  \item The Poincaré type inequality
	  $$\|u\|_{\Phi}\leq C\|\nabla u\|_{\Phi} \quad \mbox{ for } u\in W_{0}^{1,\Phi}(\Omega),$$  holds.
	\end{enumerate}
\end{theorem}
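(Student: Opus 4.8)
\emph{Plan.} Part~(1) should come out as the special case $\Psi=\Phi$ of Theorem~\ref{compact embedding theorem1 }(2), and part~(2) should then follow from the compactness in part~(1) by a standard compactness--contradiction argument. Concretely, in Theorem~\ref{compact embedding theorem1 }(2) one takes $\Psi:=\Phi$; the structural hypotheses imposed there on $\Psi$ --- namely $\Psi\in N(\Omega)$, $\Psi:\overline{\Omega}\times[0,+\infty)\to[0,+\infty)$ continuous, and $\Psi(x,t)\in(0,+\infty)$ for $x\in\Omega$ and $t>0$ --- are exactly $(H_1)$--$(H_2)$, so all of part~(1) reduces to checking $\Phi\ll\Phi_{\ast}$, i.e. that $\Phi(x,kt)/\Phi_{\ast}(x,t)\to0$ as $t\to+\infty$, uniformly in $x\in\Omega$, for each fixed $k>0$.

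\emph{The core step: $\Phi\ll\Phi_{\ast}$.} I would first rephrase relation~(\ref{delta2}), via Lemma~\ref{min-max for Phi}, as the inverse estimate $\Phi^{-1}(x,\sigma s)\le\sigma^{1/\phi^{0}}\Phi^{-1}(x,s)$ for $0\le\sigma\le1$. Substituting $\tau=\sigma s$ in the defining integral of $\Phi^{-1}_{\ast}$ then gives
$$\Phi^{-1}_{\ast}(x,s)=s^{-\frac1N}\int_{0}^{1}\frac{\Phi^{-1}(x,\sigma s)}{\sigma^{\frac{N+1}{N}}}\,d\sigma\le s^{-\frac1N}\,\Phi^{-1}(x,s)\int_{0}^{1}\sigma^{\frac1{\phi^{0}}-\frac{N+1}{N}}\,d\sigma=:C_{1}\,s^{-\frac1N}\,\Phi^{-1}(x,s),$$
where the last integral is finite \emph{precisely because} $\phi^{0}<N$, and $C_{1}=C_{1}(N,\phi^{0})$ is independent of $x$. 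Setting $s=\Phi_{\ast}(x,t)$ (so $t=\Phi^{-1}_{\ast}(x,s)$) this reads $\Phi^{-1}(x,s)\ge C_{1}^{-1}s^{1/N}t$; applying $\Phi(x,\cdot)$ and then the lower bound of Lemma~\ref{min-max for Phi} with dilation factor $C_{1}^{-1}s^{1/N}/k\ge1$ (legitimate once $t$ is large, since $s=\Phi_{\ast}(x,t)\to+\infty$) yields $\Phi(x,kt)\le(C_{1}k)^{\phi_{0}}\Phi_{\ast}(x,t)^{1-\phi_{0}/N}$, hence
$$\frac{\Phi(x,kt)}{\Phi_{\ast}(x,t)}\le(C_{1}k)^{\phi_{0}}\,\Phi_{\ast}(x,t)^{-\phi_{0}/N}\longrightarrow0\qquad(t\to+\infty).$$
For the uniformity in $x$ one uses that $(H_2)$ (together with compactness of $\overline{\Omega}$) gives $\inf_{x}\Phi(x,1)=:c_{1}>0$, whence $\Phi^{-1}(x,s)\le(s/c_{1})^{1/\phi_{0}}$ for large $s$, uniformly in $x$; feeding this back into $\Phi^{-1}_{\ast}(x,s)\le C_{1}s^{-1/N}\Phi^{-1}(x,s)$ shows $\Phi_{\ast}(x,t)\to+\infty$ uniformly in $x$, so both the threshold on $t$ and the final limit above are uniform. (Alternatively one may simply invoke the analogue of this comparison already recorded in \cite{X. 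Fan,P. Zhao}.) I expect this uniform-in-$x$ bookkeeping to be essentially the only obstacle in the argument; everything else is bookkeeping with the $(\Delta_2)$-type bounds of Lemma~\ref{min-max for Phi}.

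\emph{Part (2).} Granting part~(1), I would prove the Poincar\'e inequality by contradiction: were it false, there would exist $u_{n}\in W^{1,\Phi}_{0}(\Omega)$ with $\|u_{n}\|_{\Phi}=1$ and $\|\nabla u_{n}\|_{\Phi}\to0$; then $(u_{n})$ is bounded in $W^{1,\Phi}(\Omega)$, so by the compact embedding of part~(1) a subsequence converges strongly in $L^{\Phi}(\Omega)$ to some $u$ with $\|u\|_{\Phi}=1$. Since $\nabla u_{n}\to0$ and $u_{n}\to u$ in $L^{\Phi}(\Omega)\hookrightarrow L^{1}(\Omega)$, testing against functions of $\mathcal{C}_{0}^{\infty}(\Omega)$ forces $\nabla u=0$ in the sense of distributions, so $u$ is a constant on the connected domain $\Omega$. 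Moreover $\|u_{n}-u\|_{1,\Phi}=\|u_{n}-u\|_{\Phi}+\|\nabla u_{n}\|_{\Phi}\to0$, so $u\in W^{1,\Phi}_{0}(\Omega)$, which is closed; passing to $W^{1,1}$ through the embedding $W^{1,\Phi}_{0}(\Omega)\hookrightarrow W^{1,1}_{0}(\Omega)$ (valid under $(\phi_1)$, itself implied by $(H_2)$) and invoking the classical Poincar\'e inequality in $W^{1,1}_{0}(\Omega)$, the only admissible constant is $0$, contradicting $\|u\|_{\Phi}=1$. This yields $\|u\|_{\Phi}\le C\|\nabla u\|_{\Phi}$ on $W^{1,\Phi}_{0}(\Omega)$, as claimed.
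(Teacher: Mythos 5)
Your argument is correct. The paper itself offers essentially no proof of this statement: the theorem is recalled from \cite{X. Fan,P. Zhao}, and the only justification given is the one-line remark that it is the special case $\Psi=\Phi$ of Theorem \ref{compact embedding theorem1 } ``as $\Phi\ll\Phi_{\ast}$''. Your part (1) follows exactly that route but supplies the ingredient the paper leaves unproved, namely $\Phi\ll\Phi_{\ast}$: the inverse-function estimate $\Phi^{-1}(x,\sigma s)\le\sigma^{1/\phi^{0}}\Phi^{-1}(x,s)$ obtained from (\ref{delta2}) via Lemma \ref{min-max for Phi}, the substitution $\tau=\sigma s$ in the defining integral of $\Phi^{-1}_{\ast}$, and the resulting bound $\Phi(x,kt)\le(C_{1}k)^{\phi_{0}}\Phi_{\ast}(x,t)^{1-\phi_{0}/N}$ are all correct; this is the standard Adams/Fukagai-type computation, and your use of $\phi^{0}<N$ for convergence of the $\sigma$-integral and of $(H_2)$ together with compactness of $\overline{\Omega}$ for the uniformity in $x$ is precisely where the hypotheses enter. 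Your part (2) is a standard and correct compactness--contradiction argument resting on part (1), the closedness of $W_{0}^{1,\Phi}(\Omega)$, and the classical Poincar\'e inequality in $W_{0}^{1,1}(\Omega)$, again something the paper simply imports from the references. Two cosmetic remarks: when $T(x)<+\infty$ the quotient $\Phi(x,kt)/\Phi_{\ast}(x,t)$ should be read as $0$ for $t\ge T(x)$, so the uniform limit statement is unaffected; and in part (2) connectedness of $\Omega$ is not actually needed, since $\nabla u=0$ together with $u\in W_{0}^{1,1}(\Omega)$ and the classical Poincar\'e inequality already forces $u=0$.
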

We finish the recall of Musielak-Sobolev spaces properties by giving the following  analog lemma (see  \cite{Fukagai}).
\begin{lemma}\label{min-max for Phi ast}
	Let $u\in L^{\Phi_{\ast}}(\Omega)$ and $\rho,t\geq 0.$ Then, we have
	\begin{equation}
	\min\left\{\rho^{({\phi_0})^{\ast}},\rho^{({\phi^0})^{\ast}}\right\}\Phi_{\ast}(x,t)\leq\Phi_{\ast}(x,\rho t) \leq \max\left\{\rho^{({\phi_0})^{\ast}},\rho^{({\phi^0})^{\ast}}\right\}\Phi_{\ast}(x,t),
	\end{equation}
	\begin{equation}
	\min\left\{\|u\|^{({\phi_0})^{\ast}}_{\Phi_{\ast}},\|u\|^{({\phi^0})^{\ast}}_{\Phi_{\ast}}\right\}\leq\int_{\Omega}\Phi_{\ast}(x,|u(x)|)dx \leq \max\left\{\|u\|^{({\phi_0})^{\ast}}_{\Phi_{\ast}},\|u\|^{({\phi^0})^{\ast}}_{\Phi_{\ast}}\right\},
	\end{equation}
	where $({\phi_0})^{\ast}=\frac{N\phi_0}{N-\phi_0}$ and $({\phi^0})^{\ast}=\frac{N\phi^0}{N-\phi^0}$.\\
\end{lemma}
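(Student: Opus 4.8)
The plan is to reduce both displays of the lemma to the single structural estimate
\begin{equation}\label{delta2ast}
(\phi_{0})^{\ast}\;\le\;\frac{t\,\partial_{t}\Phi_{\ast}(x,t)}{\Phi_{\ast}(x,t)}\;\le\;(\phi^{0})^{\ast},\qquad x\in\Omega,\ \ t\in(0,T(x)),
\end{equation}
which is the exact analogue for the Sobolev conjugate of the hypothesis \eqref{delta2} on $\Phi$. Once \eqref{delta2ast} is in hand, the two inequalities follow by repeating verbatim the integration argument that turns \eqref{delta2} into Lemma \ref{min-max for Phi}: fixing $x$ and $t$ with $\Phi_{\ast}(x,t)<\infty$ and integrating $\rho\mapsto\ln\Phi_{\ast}(x,\rho t)$ between $1$ and $\rho$ over the range where $\rho t<T(x)$ gives the pointwise bound $\min\{\rho^{(\phi_{0})^{\ast}},\rho^{(\phi^{0})^{\ast}}\}\Phi_{\ast}(x,t)\le\Phi_{\ast}(x,\rho t)\le\max\{\rho^{(\phi_{0})^{\ast}},\rho^{(\phi^{0})^{\ast}}\}\Phi_{\ast}(x,t)$ (the sub-cases $\rho\ge1$ and $0<\rho\le1$ producing the $\min$ and the $\max$), and then applying this with $\rho$ slightly above and slightly below $\|u\|_{\Phi_{\ast}}$, integrating over $\Omega$ and letting $\rho\to\|u\|_{\Phi_{\ast}}$, yields the modular inequality. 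So the whole content is \eqref{delta2ast}.

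To prove \eqref{delta2ast} I would travel through the inverse functions twice. Since $\Phi(x,t)=\int_{0}^{t}\phi(x,s)s\,ds$ is $C^{1}$ in $t$ with $\partial_{t}\Phi(x,t)=\phi(x,t)t>0$ for $t>0$, the inverse $\Phi^{-1}(x,\cdot)$ is $C^{1}$ on $(0,\infty)$, and differentiating $\Phi(x,\Phi^{-1}(x,s))=s$ turns \eqref{delta2}, rewritten as $\phi_{0}\le t\,\partial_{t}\Phi(x,t)/\Phi(x,t)\le\phi^{0}$, into $\tfrac{1}{\phi^{0}}\le s\,\partial_{s}\Phi^{-1}(x,s)/\Phi^{-1}(x,s)\le\tfrac{1}{\phi_{0}}$ for $s>0$. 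Writing $h(x,\tau):=\Phi^{-1}(x,\tau)\,\tau^{-\frac{N+1}{N}}$, so that $\Phi^{-1}_{\ast}(x,s)=\int_{0}^{s}h(x,\tau)\,d\tau$ (finite by $(H_{3})$) with $\partial_{s}\Phi^{-1}_{\ast}(x,s)=h(x,s)$, the previous bound gives $\tfrac{1}{\phi^{0}}-\tfrac{N+1}{N}\le\tau\,\partial_{\tau}h(x,\tau)/h(x,\tau)\le\tfrac{1}{\phi_{0}}-\tfrac{N+1}{N}$. Now the elementary observation that a $C^{1}$ function $h>0$ on $(0,s]$ with $\alpha\le\tau h'(\tau)/h(\tau)\le\beta$ and $\alpha>-1$ satisfies $h(s)(\tau/s)^{\beta}\le h(\tau)\le h(s)(\tau/s)^{\alpha}$, whence, integrating in $\tau$, $\alpha+1\le s\,h(s)\big/\!\int_{0}^{s}h(\tau)\,d\tau\le\beta+1$, applied with $\alpha=\tfrac{1}{\phi^{0}}-\tfrac{N+1}{N}$ and $\beta=\tfrac{1}{\phi_{0}}-\tfrac{N+1}{N}$ (so $\alpha+1=\tfrac{N-\phi^{0}}{N\phi^{0}}>0$ and $\beta+1=\tfrac{N-\phi_{0}}{N\phi_{0}}$, both positive since $\phi^{0}<N$), yields $\tfrac{N-\phi^{0}}{N\phi^{0}}\le s\,\partial_{s}\Phi^{-1}_{\ast}(x,s)/\Phi^{-1}_{\ast}(x,s)\le\tfrac{N-\phi_{0}}{N\phi_{0}}$. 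Finally, differentiating $\Phi^{-1}_{\ast}(x,\Phi_{\ast}(x,t))=t$ in the same way — legitimate because $\Phi_{\ast}(x,\cdot)\in\mathcal{C}^{1}((0,T(x)))$ and $\Phi^{-1}_{\ast}(x,\cdot)\in\mathcal{C}^{1}((0,\infty))$ — converts this into \eqref{delta2ast}, since the reciprocals of $\tfrac{N-\phi^{0}}{N\phi^{0}}$ and $\tfrac{N-\phi_{0}}{N\phi_{0}}$ are exactly $(\phi^{0})^{\ast}=\tfrac{N\phi^{0}}{N-\phi^{0}}$ and $(\phi_{0})^{\ast}=\tfrac{N\phi_{0}}{N-\phi_{0}}$, and $(\phi_{0})^{\ast}\le(\phi^{0})^{\ast}$ because $r\mapsto Nr/(N-r)$ is increasing on $(0,N)$.

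The step I expect to be the real work is the middle one: correctly propagating the logarithmic-derivative bounds through the inversion $\Phi\mapsto\Phi^{-1}$, then through the integral transform $h\mapsto\int_{0}^{s}h$ defining $\Phi^{-1}_{\ast}$, then through the second inversion $\Phi^{-1}_{\ast}\mapsto\Phi_{\ast}$, and verifying that the exponent arithmetic closes up ($\tfrac{1}{\phi^{0}}-\tfrac{N+1}{N}+1=\tfrac{N-\phi^{0}}{N\phi^{0}}$, whose reciprocal is $(\phi^{0})^{\ast}$); the hypothesis $\phi^{0}<N$ in \eqref{delta2} is precisely what keeps every quantity positive and the Sobolev conjugate meaningful. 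The remaining points are cosmetic: at $t=T(x)$ one has $\Phi_{\ast}(x,t)=+\infty$, so the stated inequalities are read in the extended sense, and when passing from the pointwise estimate to the modular one over $\Omega$ one uses, as in Lemma \ref{min-max for Phi} and the standard theory of Musielak-Orlicz spaces (cf. \cite{Musielak}), that $\int_{\Omega}\Phi_{\ast}(x,|u(x)|/\|u\|_{\Phi_{\ast}})\,dx\le1$ together with a monotone approximation of $\|u\|_{\Phi_{\ast}}$ from below for the reverse bound, which is available because by Remark \ref{bounded of K} the function $\Phi_{\ast}$ is locally integrable.
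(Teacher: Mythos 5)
Your argument is correct: the reduction to the structural bound $(\phi_{0})^{\ast}\leq t\,\partial_{t}\Phi_{\ast}(x,t)/\Phi_{\ast}(x,t)\leq(\phi^{0})^{\ast}$, obtained by pushing (\ref{delta2}) through the two inversions and the integral transform defining $\Phi^{-1}_{\ast}$, is exactly the standard proof, and it is essentially the argument in the reference \cite{Fukagai} that the paper cites in lieu of a proof. (Minor remark: the worry about $t\geq T(x)$ is moot, since the lower bound $s\,\partial_{s}\Phi^{-1}_{\ast}/\Phi^{-1}_{\ast}\geq\frac{N-\phi^{0}}{N\phi^{0}}>0$ you derive already forces $T(x)=+\infty$.)
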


Now, we give some background facts concerning the variable exponent Lebesgue spaces. For more details on the basic properties of these spaces, we refer the reader to the papers \cite{D. Zhao, Kovacik}.

Set $$\mathcal{C}_{+}(\overline{\Omega})=\{h\in \mathcal{C}(\overline{\Omega}) : h(x)>1 \mbox{ for any } x\in \overline{\Omega}\}.$$
For any $h\in \mathcal{C}_{+}(\overline{\Omega})$ we define:
$$h^{-}=\min_{x\in \overline{\Omega}}h(x), \quad h^{+}=\max_{x\in \overline{\Omega}}h(x).$$
For any $q(x)\in \mathcal{C}_{+}(\overline{\Omega})$, we define the variable exponent Lebesgue space:
$$L^{q(x)}(\Omega):=\left\{ u : u: \Omega \to \mathbb{R} \mbox{ is measurable with } \int_{\Omega}|u(x)|^{q(x)}dx<\infty\right\},$$
endowed with the norm
$$\|u\|_{L^{q(x)}(\Omega)}= \|u\|_{q(x)} :=\inf\left\{\lambda>0: \; \int_{\Omega}\left|\frac{u(x)}{\lambda}\right|^{q(x)}dx\leq 1 \right\}.$$

 We recall that the variable exponent Lebesgue spaces are separable and reflexive Banach spaces. Let $L^{q^{\prime}(x)}(\Omega)$ denote the conjugate space of $L^{q(x)}(\Omega)$  with $\frac{1}{q(x)}+\frac{1}{q^{\prime}(x)}=1$. For any $u\in L^{q(x)}(\Omega)$ and $v\in L^{q^{\prime}(x)}(\Omega)$, the H\"{o}lder type inequality
 \begin{equation}
 \int_{\Omega}|uv|dx\leq \left(\frac{1}{q^{-}}+\frac{1}{q^{\prime -}}\right)\|u\|_{q(x)}\|v\|_{q^{\prime}(x)},
 \end{equation}
  holds true. Moreover, if $0<|\Omega|<\infty$ and $q_1, q_2$   are variable exponents so that $q_{1}(x)\leq q_{2}(x)$  almost everywhere in $\Omega$, then there exists the continuous embedding  $L^{q_{2}(x)}(\Omega)\hookrightarrow L^{q_{1}(x)}(\Omega)$. Furthermore, if we define the mapping $\rho_{q} : L^{q(x)}(\Omega) \to \mathbb{R}^{+}$ by
  $$\rho_{q}(u)=\int_{\Omega}|u|^{q(x)}dx,$$
  then the following relations hold true:
   \begin{align}\label{min and max}
   \min\{\|u\|^{q_{-}}_{q(x)},\|u\|^{q_{+}}_{q(x)}\}\leq \rho_{q}(u) \leq \max\{\|u\|^{q_{-}}_{q(x)},\|u\|^{q_{+}}_{q(x)}\},
   \end{align}
  \begin{align}
     \|u\|_{q(x)}<1(=1, >1) \Leftrightarrow \rho_{q}(u)<1(=1,>1),
  \end{align}
  \begin{align}
  \|u_{n}-u\|_{q(x)} \to 0 \Leftrightarrow \rho_{q}(u_{n}-u)\to 0,\quad \forall (u_n), u\in L^{q(x)}(\Omega),
  \end{align}
  \begin{align}
  \|u_{n}\|_{q(x)} \to +\infty \Leftrightarrow \rho_{q}(u_{n})\to  +\infty ,\quad \forall (u_n)\in L^{q(x)}(\Omega).
  \end{align}

We recall also the following proposition, which will be used later:
\begin{proposition}[\cite{Edmunds}]\label{proposition inequality}
	Let $p$ and $q$ be measurable functions such that $p\in L^{\infty}(\Omega)$ and $1<p(x)q(x)\leq \infty$ for a.e. $x\in \Omega$. Let $u\in L^{q(x)}(\Omega), u\neq 0$. Then
	\begin{align*}
	\|u\|_{p(x)q(x)}\leq 1 \Rightarrow \|u\|^{p^{+}}_{p(x)q(x)}\leq \||u|^{p(x)}\|_{q(x)}\leq \|u\|^{p^{-}}_{p(x)q(x)},\\
	\|u\|_{p(x)q(x)}\geq 1 \Rightarrow \|u\|^{p^{-}}_{p(x)q(x)}\leq \||u|^{p(x)}\|_{q(x)}\leq \|u\|^{p^{+}}_{p(x)q(x)}.
	\end{align*}
	In particular when $p(x)=p$ is a constant, then
	$$\||u|^{p}\|_{q(x)}=\|u\|^{p}_{pq(x)}.$$
\end{proposition}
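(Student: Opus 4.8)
The plan is to reduce the whole statement to the elementary modular identity
$$\rho_{q}\bigl(|u|^{p(\cdot)}\bigr)=\int_{\Omega}\bigl(|u(x)|^{p(x)}\bigr)^{q(x)}\,dx=\int_{\Omega}|u(x)|^{p(x)q(x)}\,dx=\rho_{pq}(u),$$
where $\rho_{pq}(v):=\int_{\Omega}|v(x)|^{p(x)q(x)}\,dx$ (with the usual $L^{\infty}$-convention on the set $\{p(x)q(x)=\infty\}$, if it has positive measure). Together with the modular--norm relations recalled above (applied to the exponents $q(\cdot)$ and $p(\cdot)q(\cdot)$), this identity shows at once that $\lambda:=\|u\|_{p(x)q(x)}$ and $\nu:=\||u|^{p(\cdot)}\|_{q(x)}$ are simultaneously $<1$, $=1$, or $>1$; in the borderline case $\lambda=\nu=1$ every inequality in the statement reads $1\le 1\le 1$, so nothing is to be proved. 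Since $u\neq 0$, both $\lambda$ and $\nu$ are positive; if one of them is $+\infty$ then so is the other (again by the identity) and the asserted chains hold trivially, so we may assume both are finite. It then remains to treat the two cases $\lambda<1$ and $\lambda>1$.

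Assume first $\lambda<1$, so that also $\nu<1$. For the upper bound $\nu\le\lambda^{p^{-}}$ it suffices, by definition of the Luxemburg norm, to verify $\rho_{q}\bigl(|u|^{p(\cdot)}/\lambda^{p^{-}}\bigr)\le 1$; since $\lambda\le 1$ and $p^{-}\le p(x)$ a.e. we have $\lambda^{p^{-}q(x)}\ge\lambda^{p(x)q(x)}$ a.e., whence
$$\rho_{q}\!\left(\frac{|u|^{p(\cdot)}}{\lambda^{p^{-}}}\right)=\int_{\Omega}\frac{|u(x)|^{p(x)q(x)}}{\lambda^{p^{-}q(x)}}\,dx\le\int_{\Omega}\frac{|u(x)|^{p(x)q(x)}}{\lambda^{p(x)q(x)}}\,dx=\rho_{pq}\!\left(\frac{u}{\lambda}\right)\le 1,$$
the last step being the standard property $\rho_{pq}(u/\|u\|_{p(x)q(x)})\le 1$. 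For the lower bound $\nu\ge\lambda^{p^{+}}$ I run the symmetric estimate on the other modular: it is enough to show $\rho_{pq}\bigl(u/\nu^{1/p^{+}}\bigr)\le 1$, which yields $\lambda\le\nu^{1/p^{+}}$. Using $p(x)/p^{+}\le 1$ and $\nu^{q(x)}\le 1$ we get $\nu^{p(x)q(x)/p^{+}}\ge\nu^{q(x)}$ a.e., hence
$$\rho_{pq}\!\left(\frac{u}{\nu^{1/p^{+}}}\right)=\int_{\Omega}\frac{|u(x)|^{p(x)q(x)}}{\nu^{p(x)q(x)/p^{+}}}\,dx\le\int_{\Omega}\frac{|u(x)|^{p(x)q(x)}}{\nu^{q(x)}}\,dx=\rho_{q}\!\left(\frac{|u|^{p(\cdot)}}{\nu}\right)\le 1.$$

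The case $\lambda>1$ (hence $\nu>1$) is entirely analogous, the only change being that the monotonicity of $t\mapsto t^{s}$ now runs the other way because all bases are $\ge 1$: one takes $c=\lambda^{p^{+}}$ in the first computation (using $p^{+}\ge p(x)$ and $\lambda\ge 1$ to get $\lambda^{p^{+}q(x)}\ge\lambda^{p(x)q(x)}$) to obtain $\nu\le\lambda^{p^{+}}$, and $c=\nu^{1/p^{-}}$ in the second (using $p(x)/p^{-}\ge 1$ and $\nu\ge 1$) to obtain $\lambda\le\nu^{1/p^{-}}$, i.e. $\nu\ge\lambda^{p^{-}}$. Finally, when $p(x)\equiv p$ is constant we have $p^{-}=p^{+}=p$, so both chains collapse to $\nu=\lambda^{p}$, that is $\||u|^{p}\|_{q(x)}=\|u\|^{p}_{pq(x)}$. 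The only point that genuinely needs care in carrying this out is keeping the direction of the scalar inequalities $t^{a}\gtrless t^{b}$ consistent with whether the base $t$ lies below or above $1$; the possible set $\{p(x)q(x)=\infty\}$ is absorbed throughout by the standard modular conventions and introduces no extra difficulty.
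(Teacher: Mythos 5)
Your argument is correct. Note first that the paper itself offers no proof of this proposition: it is recalled verbatim from Edmunds and R\'{a}kosn\'{\i}k with only a citation, so there is nothing internal to compare against; what you have written is essentially the standard proof from the cited source. The reduction to the single modular identity $\rho_{q}(|u|^{p(\cdot)})=\rho_{pq}(u)$ is the right key step, and your four one-sided estimates all point in the correct direction: in each case you test the relevant Luxemburg norm against the candidate constant, use the monotonicity of $t\mapsto t^{s}$ (with the base below or above $1$ as appropriate) to compare $\lambda^{p^{\pm}q(x)}$ with $\lambda^{p(x)q(x)}$ or $\nu^{p(x)q(x)/p^{\pm}}$ with $\nu^{q(x)}$, and close with the standard fact $\rho(v/\|v\|)\le 1$. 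The only point worth tightening is your treatment of finiteness: the hypothesis as stated is $u\in L^{q(x)}(\Omega)$, and the equivalences $\|v\|<1\Leftrightarrow\rho(v)<1$ that you invoke are stated for elements of the respective spaces, so you should observe explicitly (as your own one-sided estimates already show) that $\|u\|_{p(x)q(x)}<\infty$ and $\||u|^{p(\cdot)}\|_{q(x)}<\infty$ imply one another, rather than deducing this from equality of the plain modulars, which can both be infinite even when the norms are finite. This is a cosmetic repair, not a gap in the main argument.
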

\section{Main results}
In this section we state the main results of this paper.  We will study the problem $(P)$ when $q\in \mathcal{C}_{+}(\overline{\Omega})$ and the potential $V: \Omega \to \mathbb{R}$ is nontrivial and belongs to $L^{s(x)}(\Omega)$ with $s\in \mathcal{C}(\overline{\Omega})$. Before dealing with our main results in this section, we introduce the following assumptions  for $f(x,u)$:
\begin{enumerate}
	\item[$(f_0)$] There exists $\Psi\in N(\Omega)$  satisfying the assumption (2) of Theorem \ref{compact embedding theorem1 }, and two positive constants $\psi_{0}$ and $\psi^{0}$  such that
	\begin{align}\label{assumption f0}
	1<\psi_{0}\leq \frac{\psi(x,t)t}{\Psi(x,t)}\leq \psi^{0},  \mbox{ for } x\in \Omega \mbox{ and } t>0.
	\end{align}
	\begin{align}\label{growth of f}
	|f(x,t)|\leq C_1\psi(x,|t|)+h(x), \mbox{ for } (x,t)\in \Omega\times \mathbb{R},
	\end{align}
	where $C_1$ is a positive constant, $0\leq h\in L^{\tilde{\Psi}}(\Omega)$, and $ \psi : \Omega \times \mathbb{R}^{+} \to \mathbb{R}^{+}$  is a continuous function  and $ \Psi(x,t)=\int_{0}^{t}\psi(x,s)ds,$ for all $x\in \Omega$.
	\item[$(f_1)$] There exists $\Gamma\in N(\Omega)$ satisfying the assumptions of ($H_2$), and two positive constants $\gamma_{0}$ and $\gamma^{0}$  such that
	\begin{align}\label{gamma condition}
	1<\frac{N}{\phi_{0}}<\gamma_{0}\leq \frac{\gamma(x,t)t}{\Gamma(x,t)}\leq \gamma^{0}, \mbox{ for } x\in \Omega \mbox{ and } t>0.\\
	\Gamma\left(x,\frac{F(x,t)}{|t|^{\phi_{0}}}\right)\leq C_2 H(x,t), \mbox{ for } x\in \Omega \mbox{ and } |t|\geq M,\label{Malghass condition}
	\end{align}
	where $C_2,M$ are positive constants, $H(x,t)=f(x,t)t-\nu F(x,t),$ for all  $(x,t)\in\Omega\times \mathbb{R}$ with $\nu=\phi^{0}$ if $V\leq 0$ a.e. on $\Omega$ and $\nu=q^{+}$ if $V\geq 0$ a.e. on $\Omega$, and $ \gamma : \Omega \times \mathbb{R}^{+} \to \mathbb{R}^{+}$  is a continuous function  and $ \Gamma(x,t)=\int_{0}^{t}\gamma(x,s)ds,$ for all $x\in \Omega$.
	\item[$(f_2)$] $\lim\limits_{|t|\to +\infty}\frac{F(x,t)}{|t|^{\phi^{0}}}= +\infty,$ uniformly for $x \in \Omega$.
	\item[$(f_3)$] $f(x,t)=o(|t|\phi(x,t))$ as $ t\to 0,$ uniformly for $x\in \Omega$.
	\item[$(f_4)$] $f(x,-t)=-f(x,t)$ for all $(x,t)\in \Omega\times \mathbb{R}.$
\end{enumerate}

\vspace*{0.4cm}
To summarize all assumptions concerning the function $\Phi$, in what follows  we shall say    that the function $\Phi$ satisfies the assumption $(\Phi)$ if:
 $\phi$ satisfies the assumption $(\phi)$,  $\Phi$ satisfies $(\ref{delta2})$  and $(H_{1})$-$(H_{5})$,  both $\Phi$ and $\tilde{\Phi}$ are locally integrable and satisfy $(\phi_{2})$. Hence, under the assumption $(\Phi)$, the spaces $L^{\Phi}(\Omega)$, $W^{1,\Phi}(\Omega)$, $W_{0}^{1,\Phi}(\Omega)$ are separable reflexive Banach spaces. Therefore, we can apply  the  embedding theorems for Musielak-Sobolev spaces in Theorem \ref{compact embedding theorem1 } and Theorem \ref{compact embedding theorem2}.
\begin{definition}\label{definition of weak solution}
	 A function $u\in W_{0}^{1,\Phi}(\Omega)$  is said to be a weak solution of problem $(P)$ if it holds that
	 $$\int_{\Omega}\phi(x,|\nabla u|)\nabla u \nabla v dx+ \int_{\Omega}V(x)|u|^{q(x)-2}uvdx=\int_{\Omega}f(x,u)vdx,\quad \forall v\in W_{0}^{1,\Phi}(\Omega).$$
\end{definition}
Our main results in this paper are given by the following  theorems:
\begin{theorem}\label{main theorem 1}
 Assume that the assumptions $(\Phi)$ and $(f_0)$ hold. Furthermore,   assume that $\max\{\psi^{0}, q^{+} \}<\phi_{0}$ and $s(x)>\frac{q(x)(\phi_{0})^{\ast}}{(\phi_{0})^{\ast}-q(x)}$ for every $x\in\overline{\Omega}$. Then,  problem $(P)$ has a weak solution.	
\end{theorem}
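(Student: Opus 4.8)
The plan is to apply the direct method of the calculus of variations to the energy functional $J:W_{0}^{1,\Phi}(\Omega)\to\mathbb{R}$ naturally associated with $(P)$, namely
$$J(u)=\int_{\Omega}\Phi(x,|\nabla u|)\,dx+\int_{\Omega}\frac{V(x)}{q(x)}|u|^{q(x)}\,dx-\int_{\Omega}F(x,u)\,dx .$$
First I would check that $J$ is well defined and of class $\mathcal{C}^{1}$ on the (reflexive, by $(\Phi)$) Banach space $W_{0}^{1,\Phi}(\Omega)$, with
$$\langle J'(u),v\rangle=\int_{\Omega}\phi(x,|\nabla u|)\nabla u\nabla v\,dx+\int_{\Omega}V(x)|u|^{q(x)-2}uv\,dx-\int_{\Omega}f(x,u)v\,dx ,$$
so that critical points of $J$ are exactly the weak solutions of $(P)$ in the sense of Definition \ref{definition of weak solution}. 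It then suffices to prove that $J$ is coercive and sequentially weakly lower semicontinuous; a global minimizer $u_{0}$ then exists, and being a critical point it is a weak solution.

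For the $\mathcal{C}^{1}$ structure, the first term is standard in the Musielak--Orlicz framework (it is $\mathcal{C}^{1}$ on $W^{1,\Phi}(\Omega)$ since $\Phi,\tilde{\Phi}$ satisfy $(\Delta_{2})$, cf. \cite{P. Zhao}). For the potential term, the hypothesis $s(x)>\frac{q(x)(\phi_{0})^{\ast}}{(\phi_{0})^{\ast}-q(x)}$ rewrites as $\frac{q(x)}{s(x)}+\frac{q(x)}{(\phi_{0})^{\ast}}<1$ and, by compactness of $\overline{\Omega}$, forces $q(x)s'(x)<(\phi_{0})^{\ast}$ on $\overline{\Omega}$, where $s'$ is the conjugate exponent of $s$; hence the $N$-function $(x,t)\mapsto t^{q(x)s'(x)}$ satisfies assumption (2) of Theorem \ref{compact embedding theorem1 }, so $W^{1,\Phi}(\Omega)\hookrightarrow\hookrightarrow L^{q(x)s'(x)}(\Omega)$. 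Consequently, for $u\in W_{0}^{1,\Phi}(\Omega)$ one has $|u|^{q(x)}\in L^{s'(x)}(\Omega)$ with $\||u|^{q(x)}\|_{s'(x)}\le\max\{\|u\|_{q(x)s'(x)}^{q^{-}},\|u\|_{q(x)s'(x)}^{q^{+}}\}$ by Proposition \ref{proposition inequality}, and the Hölder inequality together with $V\in L^{s(x)}(\Omega)$ makes this term finite and, by a standard Nemytskii-operator argument, $\mathcal{C}^{1}$ with the stated derivative. For the nonlinear term, $(f_{0})$ gives $|F(x,t)|\le C_{1}\Psi(x,|t|)+h(x)|t|$ with $h\in L^{\tilde{\Psi}}(\Omega)$; since $\Psi$ satisfies assumption (2) of Theorem \ref{compact embedding theorem1 }, the embedding $W^{1,\Phi}(\Omega)\hookrightarrow\hookrightarrow L^{\Psi}(\Omega)$ and the Hölder inequality of Proposition \ref{proposition of Phi} (applied to the pair $\Psi,\tilde{\Psi}$) yield that $u\mapsto\int_{\Omega}F(x,u)\,dx$ is well defined and $\mathcal{C}^{1}$ with derivative $v\mapsto\int_{\Omega}f(x,u)v\,dx$.

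Coercivity follows from the three growth restrictions. By Lemma \ref{min-max for Phi} and the Poincaré inequality of Theorem \ref{compact embedding theorem2}, $\int_{\Omega}\Phi(x,|\nabla u|)\,dx\ge\|\nabla u\|_{\Phi}^{\phi_{0}}\ge c\,\|u\|_{1,\Phi}^{\phi_{0}}$ once $\|u\|_{1,\Phi}$ is large; on the other hand the estimates above bound $\big|\int_{\Omega}\tfrac{V}{q}|u|^{q(x)}\,dx\big|$ by $C(1+\|u\|_{1,\Phi}^{q^{+}})$, and using $(f_{0})$ (which plays for $\Psi$ the role of Lemma \ref{min-max for Phi}) together with $W^{1,\Phi}(\Omega)\hookrightarrow L^{\Psi}(\Omega)$ bounds $\big|\int_{\Omega}F(x,u)\,dx\big|$ by $C(1+\|u\|_{1,\Phi}^{\psi^{0}}+\|u\|_{1,\Phi})$. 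Since $\max\{\psi^{0},q^{+}\}<\phi_{0}$, both are of strictly lower order and $J(u)\to+\infty$ as $\|u\|_{1,\Phi}\to+\infty$. For weak lower semicontinuity, $u\mapsto\int_{\Omega}\Phi(x,|\nabla u|)\,dx$ is convex (as $\Phi(x,\cdot)$ is convex and nondecreasing) and strongly continuous, hence weakly l.s.c.; the other two terms are even sequentially weakly continuous, since $u_{n}\rightharpoonup u$ in $W_{0}^{1,\Phi}(\Omega)$ gives, by the compact embeddings, $u_{n}\to u$ in $L^{q(x)s'(x)}(\Omega)$ and in $L^{\Psi}(\Omega)$, whence $|u_{n}|^{q(x)}\to|u|^{q(x)}$ in $L^{s'(x)}(\Omega)$ and $\int_{\Omega}F(x,u_{n})\,dx\to\int_{\Omega}F(x,u)\,dx$ by continuity of the corresponding Nemytskii operators and Hölder's inequality. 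Assembling these facts, $J$ attains its infimum at some $u_{0}\in W_{0}^{1,\Phi}(\Omega)$ and $J'(u_{0})=0$, i.e. $u_{0}$ is a weak solution of $(P)$.

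The step I expect to be the main obstacle is the careful analysis of the potential term $\int_{\Omega}\tfrac{V}{q}|u|^{q(x)}\,dx$: because $V$ is only assumed to lie in $L^{s(x)}(\Omega)$ with no sign condition, neither convexity nor monotonicity is available, and one must instead combine the Hölder inequality in variable exponent spaces with the compact embedding $W^{1,\Phi}(\Omega)\hookrightarrow\hookrightarrow L^{q(x)s'(x)}(\Omega)$ — which is precisely where the exponent condition on $s$ enters — and then show that the associated Nemytskii map $u\mapsto V|u|^{q(x)-2}u$ acts continuously into $(W_{0}^{1,\Phi}(\Omega))^{\ast}$; verifying this, and likewise the differentiability of $u\mapsto\int_{\Omega}F(x,u)\,dx$ in the Musielak setting, under such minimal assumptions is the delicate point.
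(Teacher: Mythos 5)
Your proposal is correct and follows essentially the same route as the paper: both establish that the energy functional $\mathcal{I}$ is $\mathcal{C}^{1}$ (via the compact embeddings $W_{0}^{1,\Phi}(\Omega)\hookrightarrow\hookrightarrow L^{s'(x)q(x)}(\Omega)$ and $W_{0}^{1,\Phi}(\Omega)\hookrightarrow\hookrightarrow L^{\Psi}(\Omega)$ obtained from the exponent condition on $s$ and from $\Psi\ll\Phi_{\ast}$), then prove coercivity from $\max\{\psi^{0},q^{+}\}<\phi_{0}$ and weak lower semicontinuity from the convexity of the gradient term plus sequential weak continuity of the potential and nonlinear terms, and conclude by the direct method. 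The details you flag as delicate (the Hölder/compact-embedding treatment of $\int_{\Omega}\frac{V}{q}|u|^{q(x)}dx$ with sign-changing $V\in L^{s(x)}(\Omega)$) are handled in the paper exactly as you propose, in Proposition \ref{the functional I} and estimate (\ref{boundedness of V}).
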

In order to obtain the second main result, we  assume that $f$ satisfies  the following condition $(f^{\prime}_{0})$ instead of $(f_{0})$:
\vspace*{0.5mm}
\\
\begin{enumerate}
	\item[$(f^{\prime}_{0})$] We assume that (\ref{assumption f0}) of $(f_0)$ holds and that
	\begin{align}
	|f(x,t)|\leq C_1(\psi(x,|t|)+1), \mbox{ for } (x,t)\in \Omega\times \mathbb{R},
	\end{align}
	where $C_1$ is a positive constant.	
\end{enumerate}

\begin{theorem}\label{main theorem 2}
	Assume that the assumptions $(\Phi)$ and $(f^{\prime}_0)$-$(f_3)$ hold. Furthermore,  assume that $\phi^{0}<\min\{\psi_{0},q^{-}\},$ $\max\{\psi^{0},q^{+}\}<(\phi_{0})^{\ast},$ $q^{+}-\frac{1}{2}\phi_{0}<q^{-}$ and $s(x)>\frac{q(x)(\phi_{0})^{\ast}}{(\phi_{0})^{\ast}-q(x)}$ for every $x\in \overline{\Omega}$. If $V$ has a constant sign a.e. on $\Omega$, 	then the problem $(P)$ has a nontrivial weak solution.
\end{theorem}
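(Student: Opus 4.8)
\emph{Proof plan.}
The plan is to realize the weak solutions of $(P)$ as critical points of the $\mathcal{C}^{1}$ energy functional $I:X\to\mathbb{R}$, where $X:=W_{0}^{1,\Phi}(\Omega)$ is equipped with the norm $\|u\|:=\|\nabla u\|_{\Phi}$ (an equivalent norm by the Poincar\'e inequality of Theorem \ref{compact embedding theorem2}), and
$$I(u)=\int_{\Omega}\Phi(x,|\nabla u|)\,dx+\int_{\Omega}\frac{V(x)}{q(x)}\,|u|^{q(x)}\,dx-\int_{\Omega}F(x,u)\,dx,$$
and then to apply the Mountain Pass Theorem. First I would check that $I$ is well defined and of class $\mathcal{C}^{1}$ with $\langle I'(u),v\rangle$ equal to the left-hand side of the identity in Definition \ref{definition of weak solution}, so that critical points of $I$ are precisely the weak solutions: the principal part is classical under $(\phi)$ and $(\ref{delta2})$; the term with $f$ is controlled by the growth in $(f_{0}')$, Proposition \ref{proposition of Phi}(3)--(4) and the compact embedding $X\hookrightarrow\hookrightarrow L^{\Psi}(\Omega)$ furnished by $(f_{0}')$; and for the potential term I would use $V\in L^{s(x)}(\Omega)$, the H\"older inequality in variable exponent spaces and Proposition \ref{proposition inequality}, the role of $s(x)>\frac{q(x)(\phi_{0})^{\ast}}{(\phi_{0})^{\ast}-q(x)}$ being precisely to make $X$ embed compactly into the variable exponent Lebesgue spaces that appear in $\int_{\Omega}V|u|^{q(x)}$ and $\int_{\Omega}V|u|^{q(x)-1}|v|$. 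As a by-product, the lower-order operators $u\mapsto V|u|^{q(x)-2}u$ and $u\mapsto f(x,u)$ are \emph{compact} from $X$ into $X^{\ast}$.

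Next I would establish the mountain-pass geometry. Since $I(0)=0$, near the origin I would combine $(f_{3})$ (which gives $|F(x,t)|\le\varepsilon\Phi(x,t)$ for $|t|$ small and any $\varepsilon>0$) with the growth of $(f_{0}')$ to obtain $|F(x,t)|\le\varepsilon\Phi(x,t)+C_{\varepsilon}\Psi(x,|t|)$ for all $(x,t)$; then Lemma \ref{min-max for Phi}, Lemma \ref{min-max for Phi ast}, the embedding $X\hookrightarrow L^{\Psi}(\Omega)$, and the estimate $\big|\int_{\Omega}\frac{V}{q(x)}|u|^{q(x)}\big|\le C\|u\|^{q^{-}}$ for $\|u\|$ small give $I(u)\ge c\,\rho^{\phi^{0}}-C\,\rho^{\min\{\psi_{0},q^{-}\}}\ge\alpha>0$ on the sphere $\|u\|=\rho$ for $\rho$ small, where $\phi^{0}<\min\{\psi_{0},q^{-}\}$ makes the subtracted terms of higher order (and when $V\le 0$ one uses $q^{-}>\phi^{0}\ge\phi_{0}$ to dominate the potential term by $\int\Phi$). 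For the second leg I would fix $u_{0}\in X\setminus\{0\}$ and study $I(tu_{0})$ as $t\to+\infty$. If $V\le 0$ (so $\nu=\phi^{0}$), condition $(f_{2})$ together with Fatou's lemma gives $t^{-\phi^{0}}\int_{\Omega}F(x,tu_{0})\,dx\to+\infty$, whence $I(tu_{0})\le t^{\phi^{0}}\int_{\Omega}\Phi(x,|\nabla u_{0}|)\,dx-\int_{\Omega}F(x,tu_{0})\,dx\to-\infty$. If $V\ge 0$ (so $\nu=q^{+}$) the potential term scales like $t^{q^{+}}$ with $q^{+}>\phi^{0}$, so the point here is to \emph{upgrade} $(f_{2})$ to $F(x,t)/|t|^{q^{+}}\to+\infty$ uniformly: this is where $(f_{1})$ intervenes, since $(\ref{Malghass condition})$ forces $H(x,t)=f(x,t)t-q^{+}F(x,t)\ge 0$ for $|t|\ge M$ (because $\Gamma\ge 0$), hence $t\mapsto F(x,t)/|t|^{q^{+}}$ is nondecreasing there and, being also positive for large $|t|$ by $(f_{2})$, an ODE-type comparison with the superlinear lower bound for $\Gamma$ (where the size of $\gamma_{0}$ intervenes) rules out that it stays bounded. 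With the upgraded growth, the Fatou argument again yields $I(tu_{0})\to-\infty$, producing the required $e=t_{*}u_{0}$ with $\|e\|>\rho$ and $I(e)<0$.

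The hard part of the whole proof will be verifying the Palais--Smale condition, since the Ambrosetti--Rabinowitz condition is not available; I would carry this out, for a (PS) sequence $(u_{n})$ (that is, $I(u_{n})$ bounded and $I'(u_{n})\to 0$ in $X^{\ast}$), by a de Figueiredo--Miyagaki--Ruf / Costa--Magalh\~{a}es type argument built on the nonquadraticity condition $(\ref{Malghass condition})$, in the spirit of \cite{Silva,Costa}. Assuming $\|u_{n}\|\to+\infty$ for contradiction, evaluating $I(u_{n})-\tfrac1\nu\langle I'(u_{n}),u_{n}\rangle$ and using $(\ref{delta2})$ with $\phi^{0}\le\nu$ and the sign of $\big(\tfrac1{q(x)}-\tfrac1\nu\big)V$ (which is where $q^{-}>\phi^{0}$, resp.\ $q(x)\le q^{+}$, is used) gives $\int_{\Omega}H(x,u_{n})\,dx\le C+o(\|u_{n}\|)$, hence, via $(\ref{Malghass condition})$ and the boundedness of $H$ on $\{|u_{n}|\le M\}$, $\int_{\{|u_{n}|\ge M\}}\Gamma\big(x,F(x,u_{n})/|u_{n}|^{\phi_{0}}\big)\,dx\le C+o(\|u_{n}\|)$. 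A Young inequality for the complementary pair $(\Gamma,\tilde{\Gamma})$, the analogue of Lemma \ref{min-max for Phi tilde} applied to $\Gamma$, and the continuous embedding $X\hookrightarrow L^{\Phi_{\ast}}(\Omega)\hookrightarrow L^{\phi_{0}(\gamma_{0})'}(\Omega)$ --- legitimate because $\gamma_{0}>N/\phi_{0}$ is \emph{equivalent} to $\phi_{0}(\gamma_{0})'<(\phi_{0})^{\ast}$ --- then bound $\int_{\Omega}F(x,u_{n})\,dx$ by a subcritical power of $\|u_{n}\|$. Setting $v_{n}:=u_{n}/\|u_{n}\|$ and passing to a subsequence with $v_{n}\rightharpoonup v$ in $X$, $v_{n}\to v$ a.e.\ and in $L^{\Psi}(\Omega)$, I would split into two cases. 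If $v\not\equiv 0$, then $|u_{n}|\to+\infty$ a.e.\ on $\{v\neq 0\}$, and $(f_{2})$ (upgraded as above when $V\ge 0$) together with the global lower bound $F(x,t)\ge -C_{0}$ (a consequence of $(f_{2})$ being uniform and of $f$ being bounded on bounded sets) gives $\|u_{n}\|^{-\nu}\int_{\Omega}F(x,u_{n})\,dx\to+\infty$ by Fatou, contradicting the upper bound coming from boundedness of $I(u_{n})$ and the estimate of the potential term (this is where the hypotheses on $s(x)$ and $q^{+}-\tfrac12\phi_{0}<q^{-}$ are used to close the estimates). If $v\equiv 0$, I would use the standard truncation device: choose $t_{n}\in[0,1]$ with $I(t_{n}u_{n})=\max_{[0,1]}I(tu_{n})$; since $v_{n}\to 0$ in $L^{\Psi}(\Omega)$ and in the Lebesgue spaces attached to the potential, $\liminf_{n}I(Rv_{n})\ge R^{\phi_{0}}$ for every $R\ge 1$, forcing $I(t_{n}u_{n})\to+\infty$, while $\langle I'(t_{n}u_{n}),t_{n}u_{n}\rangle=0$ combined with the bound on $\int H$ keeps $I(t_{n}u_{n})$ bounded --- a contradiction. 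Thus $(u_{n})$ is bounded.

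Finally, from boundedness, up to a subsequence $u_{n}\rightharpoonup u$ in $X$; since the lower-order operators are compact, $\langle Au_{n},u_{n}-u\rangle\to 0$, where $Au:=-\mathrm{div}(\phi(x,|\nabla u|)\nabla u)$, and since $A$ is of type $(S_{+})$ on $X$ (a standard consequence of $(\phi)$ and the strict monotonicity in $(\ref{delta2})$, cf.\ \cite{P. Zhao}), it follows that $u_{n}\to u$ strongly and $I'(u)=0$, so $I$ satisfies the (PS) condition. The Mountain Pass Theorem then provides a critical point $u$ of $I$ at the level $c=\inf_{\gamma}\max_{t}I(\gamma(t))\ge\alpha>0$, and since $I(0)=0<c$ this $u$ is nontrivial, hence a nontrivial weak solution of $(P)$. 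To summarize, the only genuinely delicate points are (i) the boundedness of (PS) sequences without (A-R), handled via $(\ref{Malghass condition})$ with the subcriticality bookkeeping governed by $\gamma_{0}>N/\phi_{0}$, $\max\{\psi^{0},q^{+}\}<(\phi_{0})^{\ast}$ and $q^{+}-\tfrac12\phi_{0}<q^{-}$, and (ii) in the case $V\ge 0$, the upgrade of $(f_{2})$ from exponent $\phi^{0}$ to exponent $q^{+}$; everything else is routine.
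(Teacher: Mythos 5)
Your overall skeleton (energy functional of class $\mathcal{C}^{1}$, mountain--pass geometry, contradiction argument on $v_{n}=u_{n}/\|u_{n}\|_{1,\Phi}$ split into the cases $v\equiv 0$ and $v\not\equiv 0$, then strong convergence via the $(S_{+})$ property of $\mathcal{H}'$) is the same as the paper's. However, there are two genuine gaps. The first is that you never isolate the key estimate on the potential term, namely (Lemma \ref{estimation of Vu} of the paper) that for $V\in L^{s(x)}(\Omega)$ with $s(x)>\frac{q(x)(\phi_{0})^{\ast}}{(\phi_{0})^{\ast}-q(x)}$ and $q^{+}-\tfrac12\phi_{0}<q^{-}$, one can split $|V||u|^{q(x)}=(|V||u|^{\theta})(|u|^{q(x)-\theta})$ by H\"older and obtain
$\int_{\Omega}|V||u|^{q(x)}dx\leq C(1+\|u\|_{1,\Phi}^{2(q^{+}-\theta)}+\|u\|_{1,\Phi}^{2\theta r^{+}/r^{-}})$ with \emph{both exponents strictly below} $\phi_{0}$. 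This is precisely what those two hypotheses are for. Because of it, the potential term is of lower order than $\int_{\Omega}\Phi(x,|\nabla u|)dx$ in every estimate, so in the second leg of the geometry and in the case $v\not\equiv0$ one divides by $t^{\phi^{0}}$ (resp. $\|u_{n}\|_{1,\Phi}^{\phi^{0}}$) and the original $(f_{2})$ suffices even when $V\geq0$. Your claim that the potential term ``scales like $t^{q^{+}}$'' and that one must therefore upgrade $(f_{2})$ to $F(x,t)/|t|^{q^{+}}\to+\infty$ is both unnecessary and, as an argument, unsound: your ODE comparison from $\Gamma(x,F/|t|^{\phi_{0}})\leq C_{2}H$ with $H=ft-q^{+}F\geq0$ only produces a divergent integral (hence a contradiction with boundedness of $F/|t|^{q^{+}}$) when $\gamma_{0}\geq q^{+}/(q^{+}-\phi_{0})$, which is not assumed; in the paper's own application one takes $\gamma_{0}=\beta<q^{+}/(q^{+}-p^{-})$, so the upgrade is not derivable from the hypotheses by your route.

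The second gap is in the case $v\equiv 0$. The Jeanjean truncation device ($t_{n}\in[0,1]$ maximizing $I(tu_{n})$, then $I(t_{n}u_{n})\to+\infty$ versus boundedness) requires bounding $\int_{\Omega}H(x,t_{n}u_{n})dx$ by $\int_{\Omega}H(x,u_{n})dx$ plus a constant, i.e.\ a monotonicity condition of the type $\bar F(x,t)\leq\bar F(x,s)+\mu_{1}$ for $t$ between $0$ and $s$ (condition (\ref{monotonicity}) in the introduction, used in \cite{H. Toan,Bin}); no such hypothesis is assumed here, and the bound you have is only on $\int_{\Omega}H(x,u_{n})dx$, not on $\int_{\Omega}H(x,t_{n}u_{n})dx$, so ``keeps $I(t_{n}u_{n})$ bounded'' does not follow. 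The paper's mechanism in this case is different and is where $(f_{1})$ actually does its work: from the normalized inequality $1\leq o(1)+\int_{\Omega}F(x,u_{n})\|u_{n}\|_{1,\Phi}^{-\phi_{0}}dx$ one writes $\int_{\{|u_{n}|>M\}}\frac{F(x,u_{n})}{|u_{n}|^{\phi_{0}}}|v_{n}|^{\phi_{0}}dx\leq 2\big\|\tfrac{F(x,u_{n})}{|u_{n}|^{\phi_{0}}}\chi_{\{|u_{n}|>M\}}\big\|_{\Gamma}\,\big\||v_{n}|^{\phi_{0}}\big\|_{\tilde\Gamma}$, bounds the first factor via (\ref{Malghass condition}) and the estimate $\int_{\Omega}H(x,u_{n})dx\leq C$ coming from $\nu\mathcal{I}(u_{n})-\langle\mathcal{I}'(u_{n}),u_{n}\rangle$ (the sign of $V$ and the comparison of $\nu$ with $q(x)$ enter here), and shows the second factor tends to $0$ by the compact embedding $W_{0}^{1,\Phi}(\Omega)\hookrightarrow\hookrightarrow L^{K}(\Omega)$ with $K(x,t)=\tilde\Gamma(x,|t|^{\phi_{0}})$ --- this last step is exactly where $\gamma_{0}>N/\phi_{0}$ is used. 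You should replace your truncation argument by this Orlicz--H\"older/compact-embedding argument, and insert the interpolation lemma for the potential at the outset; with those two repairs your proof closes along the paper's lines.
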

\begin{theorem}\label{main theorem 3, Fountain}
	 	Assume that the assumptions of Theorem \ref{main theorem 2} hold. If the function $f$ satisfies $(f_4)$, then the problem $(P)$ has a sequence of weak solutions $(\pm u_{n})_{n\in \mathbb{N}} \subseteq W_{0}^{1,\Phi}(\Omega) $ such that $\mathcal{I}(\pm u_n)\to +\infty$ as $n\to+\infty$.
\end{theorem}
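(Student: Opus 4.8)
I would apply the Fountain Theorem (Bartsch) to the energy functional
$$\mathcal{I}(u):=\int_\Omega\Phi(x,|\nabla u|)\,dx+\int_\Omega\frac{V(x)}{q(x)}|u|^{q(x)}\,dx-\int_\Omega F(x,u)\,dx,$$
defined on $X:=W_0^{1,\Phi}(\Omega)$ with norm $\|u\|:=\|\nabla u\|_{\Phi}$ (equivalent to the full norm by the Poincar\'e inequality of Theorem \ref{compact embedding theorem2}). Under $(\Phi)$ the space $X$ is separable and reflexive. Using $(f_0')$, the compact embedding $W^{1,\Phi}(\Omega)\hookrightarrow\hookrightarrow L^{\Psi}(\Omega)$ (valid since $\Psi$ satisfies assumption (2) of Theorem \ref{compact embedding theorem1 }) and the embedding $X\hookrightarrow L^{q(x)s'(x)}(\Omega)$ (available because $s(x)>\frac{q(x)(\phi_0)^{\ast}}{(\phi_0)^{\ast}-q(x)}$ forces $q(x)s'(x)<(\phi_0)^{\ast}$, hence $|t|^{q(x)s'(x)}\ll\Phi_{\ast}$), together with Proposition \ref{proposition inequality} and the variable-exponent H\"older inequality, one checks in the standard way that $\mathcal{I}\in\mathcal{C}^{1}(X,\mathbb{R})$ and that its critical points are precisely the weak solutions of $(P)$ in the sense of Definition \ref{definition of weak solution}. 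By $(f_4)$, $\mathcal{I}$ is even with $\mathcal{I}(0)=0$. Crucially, $\mathcal{I}$ satisfies the Palais--Smale condition at every level $c\in\mathbb{R}$: this is exactly what is established inside the proof of Theorem \ref{main theorem 2} (where boundedness of Palais--Smale sequences is deduced from $(f_1)$--$(f_2)$, the condition $q^+-\tfrac12\phi_0<q^-$, and $V\in L^{s(x)}(\Omega)$), so I would simply invoke it here rather than reprove it. Fixing a Schauder basis $(e_j)_{j\ge1}$ of $X$ and setting $X_j:=\mathbb{R}e_j$, $Y_k:=\bigoplus_{j=1}^{k}X_j$, $Z_k:=\overline{\bigoplus_{j\ge k}X_j}$, it then remains to verify the two geometric conditions of the Fountain Theorem, for suitable radii $0<r_k<\rho_k$:
\begin{enumerate}
\item[(A1)] $a_k:=\max_{u\in Y_k,\,\|u\|=\rho_k}\mathcal{I}(u)\le0$;
\item[(A2)] $b_k:=\inf_{u\in Z_k,\,\|u\|=r_k}\mathcal{I}(u)\to+\infty$ as $k\to\infty$.
\end{enumerate}

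\noindent For (A2) I would set $\beta_k:=\sup_{u\in Z_k,\,\|u\|=1}\|u\|_{\Psi}$ and $\sigma_k:=\sup_{u\in Z_k,\,\|u\|=1}\|u\|_{q(x)s'(x)}$; by compactness of $X\hookrightarrow\hookrightarrow L^{\Psi}(\Omega)$ and $X\hookrightarrow\hookrightarrow L^{q(x)s'(x)}(\Omega)$, a routine argument gives $\beta_k\to0$ and $\sigma_k\to0$. For $u\in Z_k$ with $\|u\|=r_k\ge1$, Lemma \ref{min-max for Phi} bounds the principal part from below by $\int_\Omega\Phi(x,|\nabla u|)\,dx\ge r_k^{\phi_0}$; the potential term is controlled, via H\"older and Proposition \ref{proposition inequality}, by
$$\left|\int_\Omega\frac{V(x)}{q(x)}|u|^{q(x)}\,dx\right|\le\frac{C}{q^-}\,\|V\|_{s(x)}\,\max\left\{\|u\|_{q(x)s'(x)}^{q^-},\,\|u\|_{q(x)s'(x)}^{q^+}\right\}\le C\max\left\{(\sigma_k r_k)^{q^-},(\sigma_k r_k)^{q^+}\right\};$$
and, using $(f_0')$ (so $|F(x,t)|\le C_1(\Psi(x,|t|)+|t|)$), the $\Psi$-analogue of Lemma \ref{min-max for Phi}, and $X\hookrightarrow L^{1}(\Omega)$, the nonlinear term obeys $\int_\Omega F(x,u)\,dx\le C\max\{(\beta_k r_k)^{\psi_0},(\beta_k r_k)^{\psi^0}\}+C\sigma_k r_k$. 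Combining,
$$\mathcal{I}(u)\ge r_k^{\phi_0}-C\left[(\sigma_k r_k)^{q^-}+(\sigma_k r_k)^{q^+}+(\beta_k r_k)^{\psi_0}+(\beta_k r_k)^{\psi^0}+\sigma_k r_k\right].$$
Choosing $r_k:=\big(\max\{\beta_k,\sigma_k\}\big)^{-1/2}$ (legitimate for $k$ large, so that $r_k\to+\infty$ while $\beta_k r_k\to0$ and $\sigma_k r_k\to0$) makes the term $r_k^{\phi_0}$ dominate all the others, hence $b_k\ge\tfrac12 r_k^{\phi_0}\to+\infty$.

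\noindent For (A1) I would exploit the superlinearity of $f$ together with $\dim Y_k<\infty$ (all norms on $Y_k$ equivalent). By $(f_2)$, and in the case $V\ge0$ by the strengthening coming from \eqref{Malghass condition} with $\nu=q^+$ (which forces $f(x,t)t\ge q^+F(x,t)$ for $|t|\ge M$ and, together with the uniform positivity of $F(x,M)$ supplied by $(f_2)$, yields a lower bound $F(x,t)\ge c|t|^{q^+}-c'$ of the homogeneity needed to absorb the positive potential term), a Fatou-type argument along rays $u=tw$, $w\in Y_k\setminus\{0\}$, shows $\mathcal{I}(u)\to-\infty$ as $\|u\|\to+\infty$ in $Y_k$. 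Hence $\rho_k>r_k$ can be chosen with $a_k\le0$. With (A1)--(A2) verified, the Fountain Theorem produces an unbounded sequence of critical values $c_k\to+\infty$ of $\mathcal{I}$, i.e.\ a sequence $(u_n)\subset W_0^{1,\Phi}(\Omega)$ of weak solutions of $(P)$ with $\mathcal{I}(u_n)\to+\infty$; evenness of $\mathcal{I}$ makes the critical points come in pairs $\pm u_n$, which is the claim.

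\noindent\textbf{Main obstacle.} The delicate step is (A1), i.e.\ the anti-coercivity of $\mathcal{I}$ on the finite-dimensional subspaces $Y_k$ in the case $V\ge0$: there the potential $\int_\Omega\frac{V}{q(x)}|u|^{q(x)}$ contributes positively at order $\|u\|^{q^+}$, which exceeds $\|u\|^{\phi^0}$ since $\phi^0<q^-\le q^+$, so the bare superlinearity $(f_2)$ (stated only with exponent $\phi^0$) does not suffice; one must extract the additional growth encoded in \eqref{Malghass condition} with $\nu=q^+$, and it is precisely in making this competition of exponents work that the balance conditions $\max\{\psi^0,q^+\}<(\phi_0)^{\ast}$ and $q^+-\tfrac12\phi_0<q^-$ are used. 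A secondary, more routine, point is the decay $\beta_k,\sigma_k\to0$, which relies entirely on the compactness of the Musielak--Sobolev embeddings of Theorem \ref{compact embedding theorem1 }, that is, on $\Psi\ll\Phi_{\ast}$ and $q(x)s'(x)<(\phi_0)^{\ast}$.
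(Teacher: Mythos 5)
There is a genuine gap in your step (A1), and it comes from missing the paper's key technical device, Lemma \ref{estimation of Vu}. You assert that on $Y_k$ the potential term contributes positively at order $\|u\|^{q^+}$ and must therefore be absorbed by a lower bound $F(x,t)\ge c|t|^{q^+}-c'$ extracted from \eqref{Malghass condition} with $\nu=q^+$. Even granting that such a bound can be derived (integrating $f(x,t)t\ge q^+F(x,t)$ from a level where $F$ is uniformly positive), it cannot do the job: the constant $c$ it produces is \emph{fixed} by $f$, whereas the competing term $\frac{1}{q^-}\int_\Omega V|tw|^{q(x)}dx$ has the same homogeneity $t^{q^+}$ along rays but a coefficient $\frac{1}{q^-}\int_\Omega V|w|^{q(x)}dx$ that depends on $V$ and $w$ and can exceed $c\int_\Omega|w|^{q^+}dx$. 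Matching homogeneities with an untunable constant does not give anti-coercivity. The paper avoids this entirely: Lemma \ref{estimation of Vu} splits $|V||u|^{q(x)}\le C\|V|u|^{\theta}\|_{r(x)}\||u|^{q(x)-\theta}\|_{(r(x))^{\prime}}$ with $\theta$ chosen in $(q^+-\tfrac12\phi_0,\,q^-)$ (this is exactly where $q^+-\tfrac12\phi_0<q^-$ enters), so that after Young's inequality the potential is bounded by $\|u\|_{1,\Phi}^{2(q^+-\theta)}+\|u\|_{1,\Phi}^{2\theta r^+/r^-}$ with \emph{both exponents strictly below} $\phi_0\le\phi^0$. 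The potential is then subordinate to the principal part in both geometric conditions, and on $Y_k$ the bare superlinearity $(f_2)$ --- which supplies $F(x,t)\ge L|t|^{\phi^0}-C_L$ with $L$ arbitrarily large --- suffices; no strengthening via \eqref{Malghass condition} is needed for the geometry (that condition is used only for the Palais--Smale boundedness).

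Your step (A2) is fine and in fact takes a slightly different, legitimate route from the paper: you control the potential on $Z_k$ through the decay $\sigma_k\to0$ of the embedding constants into $L^{q(x)s^{\prime}(x)}(\Omega)$ and choose $r_k=(\max\{\beta_k,\sigma_k\})^{-1/2}$, whereas the paper again invokes Lemma \ref{estimation of Vu} (so that only the decay $\beta_k\to0$ is needed) and takes $r_k=(2C_1\beta_k)^{1/(\phi_0-\psi^0)}$. Both work; the paper's version has the advantage of reusing the same potential estimate in (A1), (A2), the coercivity failure on rays, and the Palais--Smale analysis. The rest of your outline (evenness from $(f_4)$, importing the Palais--Smale condition from the proof of Theorem \ref{main theorem 2}, the identification of critical points with weak solutions) matches the paper.
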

In order to prove Theorem \ref{main theorem 3, Fountain} we will use the following Fountain theorem (see \cite{Willem} for details). Let $(X,\|\cdot\|)$ be a real  reflexive Banach space such that  $X=\overline{\oplus_{j\in \mathbb{N^{\ast}}} X_{j}}$ with $dim(X_j)<+\infty$ for any $j \in \mathbb{N^{\ast}}.$ For each $k \in \mathbb{N^{\ast}}$, we set $Y_k=\oplus^{k}_{j=1}X_{j}$ and $Z_{k}=\overline{\oplus^{\infty}_{j=k}X_{j}}$.
\begin{proposition}[Fountain theorem]\label{Fountain}
	Let $(X,\|\cdot\|)$ be a real reflexive Banach space and $\mathcal{I}\in \mathcal{C}^{1}(X,\mathbb{R})$  an even functional. If for each sufficiently large $k \in  \mathbb{N^{\ast}},$ there exist $\rho_{k}>r_k >0$ such that the following conditions hold:
	\begin{enumerate}
		\item $\inf_{\{u\in Z_{k}, \|u\|=r_k \}}\mathcal{I}(u) \to +\infty$ as $k\to +\infty$,
		\item $\max_{\{u\in Y_{k}, \|u\|=\rho_k \}}\mathcal{I}(u) \leq 0$,
		\item $\mathcal{I}$ satisfies the Palais-Smale condition for every $c>0$,
	\end{enumerate}
then $\mathcal{I}$ has a sequence of critical values tending to $+\infty$.
\end{proposition}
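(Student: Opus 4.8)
The plan is to prove this abstract minimax result by the standard scheme for even functionals, combining a Borsuk--Ulam type intersection property with an equivariant deformation argument; since $\mathcal{I}$ is even, every construction must respect the $\mathbb{Z}_{2}$-symmetry $u\mapsto -u$. First I would fix notation for the levels in the hypotheses: for each sufficiently large $k$ set $b_k=\inf_{u\in Z_k,\,\|u\|=r_k}\mathcal{I}(u)$ and $a_k=\max_{u\in Y_k,\,\|u\|=\rho_k}\mathcal{I}(u)$, so that (1) reads $b_k\to+\infty$ and (2) reads $a_k\leq 0$. Because each $Y_k$ is finite dimensional, the ball $B_k=\{u\in Y_k:\|u\|\leq\rho_k\}$ is compact, so the minimax class
$$\Gamma_k=\{\gamma\in C(B_k,X):\gamma\text{ is odd and }\gamma|_{\partial B_k}=\mathrm{id}\}$$
is nonempty (it contains the identity) and the value
$$c_k=\inf_{\gamma\in\Gamma_k}\,\max_{u\in B_k}\mathcal{I}(\gamma(u))$$
is a well-defined real number with $c_k\leq\max_{B_k}\mathcal{I}<+\infty$.

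The crucial geometric step is an intersection lemma: for every $\gamma\in\Gamma_k$ the image $\gamma(B_k)$ meets the sphere $S_k=\{u\in Z_k:\|u\|=r_k\}$. I would prove this by a degree / Borsuk--Ulam argument. Writing $X=Y_{k-1}\oplus Z_k$ and letting $P$ be the projection onto the finite-dimensional factor $Y_{k-1}$, emptiness of $\gamma(B_k)\cap S_k$ would allow one to build a nonvanishing odd map from the sphere $\partial B_k\subset Y_k$ into a space of strictly smaller dimension, contradicting Borsuk--Ulam, since $\dim Y_k=\dim Y_{k-1}+\dim X_k>\dim Y_{k-1}=\mathrm{codim}\,Z_k$. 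Granting the lemma, every $\gamma\in\Gamma_k$ satisfies $\max_{B_k}\mathcal{I}(\gamma)\geq\inf_{S_k}\mathcal{I}=b_k$, hence $c_k\geq b_k$ and therefore $c_k\to+\infty$ as $k\to\infty$.

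It remains to show each $c_k$ is a critical value, which I would argue by contradiction using the quantitative equivariant deformation lemma; this is available precisely because $\mathcal{I}$ is even and satisfies the $(PS)_c$ condition of hypothesis (3). If $c_k$ were a regular value, there would exist small $\varepsilon\in(0,c_k)$ and an odd homeomorphism $\eta$ of $X$ with $\eta(\{\mathcal{I}\leq c_k+\varepsilon\})\subset\{\mathcal{I}\leq c_k-\varepsilon\}$ and $\eta=\mathrm{id}$ on $\{\mathcal{I}\leq c_k-2\varepsilon\}$. For $k$ large one has $c_k>0\geq a_k$, so after shrinking $\varepsilon$ the boundary satisfies $\mathcal{I}\leq a_k\leq c_k-2\varepsilon$ on $\partial B_k$, whence $\eta$ fixes $\partial B_k$. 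Choosing $\gamma\in\Gamma_k$ with $\max_{B_k}\mathcal{I}(\gamma)\leq c_k+\varepsilon$, the composition $\eta\circ\gamma$ is again odd and equals the identity on $\partial B_k$, so $\eta\circ\gamma\in\Gamma_k$, yet $\max_{B_k}\mathcal{I}(\eta\circ\gamma)\leq c_k-\varepsilon<c_k$, contradicting the definition of $c_k$. Hence each $c_k$ is a critical value, and $(c_k)$ is the desired sequence of critical values tending to $+\infty$.

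The main obstacle is the equivariant machinery underlying the two key steps. The intersection lemma rests on the Borsuk--Ulam theorem (equivalently on the properties of the Krasnoselskii genus), and the deformation must itself be odd; constructing it requires an odd, locally Lipschitz pseudo-gradient vector field and the associated descending flow, where the compactness needed to lower the functional by a definite amount near the level $c_k$ is furnished exactly by the $(PS)_{c_k}$ condition. Ensuring that both the linking and the flow are genuinely $\mathbb{Z}_2$-equivariant is the delicate point on which the whole argument turns.
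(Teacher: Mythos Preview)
The paper does not prove this proposition at all; it merely states the Fountain theorem and refers the reader to Willem \cite{Willem} for the proof. Your proposal is precisely the standard argument from that reference (minimax over odd extensions of the identity on $B_k\subset Y_k$, intersection lemma via Borsuk--Ulam, and an equivariant deformation to show each $c_k$ is critical), so in substance you are reproducing the cited proof rather than diverging from anything the authors do. There is nothing to compare: your sketch is correct and matches the source the paper points to.
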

\section{Proofs of the main results}
In this section we give the proofs of our main results. We note that in these results we always have $s(x)>\frac{q(x)(\phi_{0})^{\ast}}{(\phi_{0})^{\ast}-q(x)}$ for every $x\in\overline{\Omega}$ and $\max\{\psi^{0}, q^{+} \}<(\phi_{0})^{\ast}$.

 Define the functional $\mathcal{I} :W_{0}^{1,\Phi}(\Omega)\to \mathbb{R} $ by the formula
\begin{equation}\label{definition of I}
\mathcal{I}(u)= \mathcal{H}(u)+\mathcal{J}(u)-\mathcal{F}(u),
\end{equation}
where,
$$\mathcal{H}(u)=\int_{\Omega}\Phi(x,|\nabla u|)dx,\quad \mathcal{J}(u)=\int_{\Omega}\frac{V(x)}{q(x)}|u|^{q(x)}dx, \quad\mbox{and}\quad \mathcal{F}(u)=\int_{\Omega}F(x,u)dx,$$
with $F(x,t)=\int_{0}^{t}f(x,s)ds.$
\begin{proposition}\label{the functional I}
	The functional $\mathcal{I}$ is well defined and $\mathcal{I}\in \mathcal{C}^{1}(W_{0}^{1,\Phi}(\Omega),\mathbb{R})$ with the derivative given by
	\begin{equation*}\label{derivative of I}
	\langle\mathcal{I}^{\prime}(u),v \rangle= \int_{\Omega}\phi(x,|\nabla u|)\nabla u \nabla v dx+ \int_{\Omega}V(x)|u|^{q(x)-2}uvdx-\int_{\Omega}f(x,u)vdx,\quad \forall u, v\in W_{0}^{1,\Phi}(\Omega).
	\end{equation*}
\end{proposition}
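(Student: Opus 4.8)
The plan is to decompose $\mathcal{I}$ into its three pieces $\mathcal{H}$, $\mathcal{J}$, $\mathcal{F}$ and treat each one separately, showing in each case that the functional is finite on $W_0^{1,\Phi}(\Omega)$, is Gateaux differentiable with the claimed derivative, and that this Gateaux derivative is continuous from $W_0^{1,\Phi}(\Omega)$ into its dual; a standard result then upgrades Gateaux to Fr\'echet differentiability and gives $\mathcal{I}\in\mathcal{C}^1$. The additivity of the derivative formula is then immediate. Throughout, the working norm on $W_0^{1,\Phi}(\Omega)$ is $\|\nabla u\|_\Phi$, which by Theorem~\ref{compact embedding theorem2} (the Poincar\'e inequality) is equivalent to $\|u\|_{1,\Phi}$.

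For $\mathcal{H}$: finiteness is clear from $u\in W_0^{1,\Phi}(\Omega)$ together with Proposition~\ref{proposition of Phi}(1) (recall $\Phi\in\Delta_2$). For the derivative, fix $u,v$ and write the difference quotient $\big(\mathcal{H}(u+tv)-\mathcal{H}(u)\big)/t = \int_\Omega \big(\Phi(x,|\nabla u+t\nabla v|)-\Phi(x,|\nabla u|)\big)/t\,dx$; apply the mean value theorem pointwise (using $\partial_s\Phi(x,s)=\phi(x,s)s$) and dominate the integrand via Proposition~\ref{proposition of Phi}(4) (which gives $\phi(x,|\nabla u|)\nabla u\in L^{\tilde\Phi}$) combined with the H\"older inequality of Proposition~\ref{proposition of Phi}(3), so that dominated convergence yields $\langle\mathcal{H}'(u),v\rangle=\int_\Omega\phi(x,|\nabla u|)\nabla u\cdot\nabla v\,dx$. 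Continuity of $\mathcal{H}'$: if $u_n\to u$ in $W_0^{1,\Phi}$ then $\nabla u_n\to\nabla u$ in $L^\Phi$, hence (passing to a subsequence) $\nabla u_n\to\nabla u$ a.e. and dominated in $L^\Phi$; the Carath\'eodory map $\zeta\mapsto\phi(x,|\zeta|)\zeta$ then generates a Nemytskii operator that is continuous from $L^\Phi$ into $L^{\tilde\Phi}$ (one checks this by the usual subsequence-and-a.e. argument, using the $\Delta_2$ bounds of Lemmas~\ref{min-max for Phi} and~\ref{min-max for Phi tilde} to get a common integrable majorant), and then $\|\mathcal{H}'(u_n)-\mathcal{H}'(u)\|_*\le 2\|\phi(x,|\nabla u_n|)\nabla u_n-\phi(x,|\nabla u|)\nabla u\|_{\tilde\Phi}\to 0$ by Proposition~\ref{proposition of Phi}(3).

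For $\mathcal{J}$: here one must justify that $\int_\Omega |V(x)||u|^{q(x)}\,dx<\infty$. Since $V\in L^{s(x)}(\Omega)$ and $s(x)>\frac{q(x)(\phi_0)^*}{(\phi_0)^*-q(x)}$, the conjugate exponent $s'(x)$ satisfies $s'(x)q(x)<(\phi_0)^*$, so by Theorem~\ref{compact embedding theorem1 }(1) together with the variable-exponent embedding $L^{\Phi_*}\hookrightarrow L^{(\phi_0)^*}\hookrightarrow L^{s'(x)q(x)}$ (using Lemma~\ref{min-max for Phi ast}) we have $|u|^{q(x)}\in L^{s'(x)}(\Omega)$; the variable-exponent H\"older inequality then bounds $\mathcal{J}$. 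Differentiability and continuity of $\mathcal{J}'$ proceed exactly as for $\mathcal{H}$, the relevant Nemytskii operator being $t\mapsto |t|^{q(x)-2}t$ acting from $L^{s'(x)q(x)}(\Omega)$ into $L^{s'(x)}(\Omega)$, which is continuous by Proposition~\ref{proposition inequality} and the standard variable-exponent Nemytskii theorem; pairing against $V\in L^{s(x)}$ via H\"older gives $\|\mathcal{J}'(u_n)-\mathcal{J}'(u)\|_*\to 0$. For $\mathcal{F}$: the growth hypothesis $(f_0)$ (or $(f'_0)$) gives $|f(x,t)|\le C_1\psi(x,|t|)+h(x)$ with $h\in L^{\tilde\Psi}$ and $\Psi\ll\Phi_*$, so by the compact embedding $W_0^{1,\Phi}\hookrightarrow\hookrightarrow L^\Psi$ of Theorem~\ref{compact embedding theorem1 }(2) the Nemytskii operator $u\mapsto f(x,u)$ maps $W_0^{1,\Phi}$ into $L^{\tilde\Psi}$ continuously, $F(x,u)\in L^1$, and the same mean-value-plus-dominated-convergence scheme yields $\langle\mathcal{F}'(u),v\rangle=\int_\Omega f(x,u)v\,dx$ with $\mathcal{F}'$ continuous (in fact compact, though only continuity is needed here).

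The main obstacle is the middle term $\mathcal{J}$: one has to chase the chain of embeddings $W_0^{1,\Phi}(\Omega)\hookrightarrow L^{\Phi_*}(\Omega)\hookrightarrow L^{(\phi_0)^*}(\Omega)\hookrightarrow L^{s'(x)q(x)}(\Omega)$ carefully to see exactly where the arithmetic condition $s(x)>\frac{q(x)(\phi_0)^*}{(\phi_0)^*-q(x)}$ is used, and to verify that the Nemytskii operator $t\mapsto|t|^{q(x)-2}t$ lands in the correct variable-exponent dual space so that the H\"older pairing with $V$ makes sense; the corresponding estimates for $\mathcal{H}$ and $\mathcal{F}$ are by comparison routine, being direct transcriptions of the Orlicz-space arguments in \cite{P. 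Zhao}. Once all three derivatives are shown continuous, summing gives $\mathcal{I}\in\mathcal{C}^1(W_0^{1,\Phi}(\Omega),\mathbb{R})$ with the stated formula. \qed
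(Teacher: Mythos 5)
Your proposal is correct and follows essentially the same route as the paper: the same splitting $\mathcal{I}=\mathcal{H}+\mathcal{J}-\mathcal{F}$, with well-definedness of $\mathcal{J}$ and $\mathcal{F}$ obtained from the embeddings of $W_{0}^{1,\Phi}(\Omega)$ into $L^{s^{\prime}(x)q(x)}(\Omega)$ and $L^{\Psi}(\Omega)$, and the $\mathcal{C}^{1}$ property of each piece reduced to the standard Nemytskii and dominated-convergence arguments that the paper delegates to \cite{Mihai} and \cite{Kefi} and that you sketch explicitly. The only cosmetic difference is that the paper verifies $|t|^{s^{\prime}(x)q(x)}\ll\Phi_{\ast}$, $|t|^{\alpha(x)}\ll\Phi_{\ast}$ and $\Psi\ll\Phi_{\ast}$ directly (thereby also recording the compact embeddings it reuses later), whereas you chain continuous embeddings through $L^{(\phi_{0})^{\ast}}(\Omega)$, which suffices for this proposition.
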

\begin{proof}
	Firstly, it is clear that $\mathcal{H}$ is well defined on $W_{0}^{1,\Phi}(\Omega)$. Furthermore, by similar  arguments  used in the proof of  \cite[Lemma 4.2]{Mihai}, we have   $\mathcal{H}\in \mathcal{C}^{1}(W_{0}^{1,\Phi}(\Omega),\mathbb{R})$ and its derivative is given by $$\langle\mathcal{H}^{\prime}(u),v \rangle= \int_{\Omega}\phi(x,|\nabla u|)\nabla u \nabla vdx, \quad\forall u, v\in W_{0}^{1,\Phi}(\Omega).$$
	Secondly, the functional $\mathcal{J}$ is well defined. Indeed, since $s(x)>\frac{q(x)(\phi_{0})^{\ast}}{(\phi_{0})^{\ast}-q(x)}$ for every $x\in\overline{\Omega}$, then it is clear that $s\in \mathcal{C}_{+}(\overline{\Omega})$ and $s(x)>q(x)$ for every $x\in\overline{\Omega}$. Furthermore, by a simple computation we have,
	\begin{align}
		1<s^{\prime}(x)q(x)<(\phi_{0})^{\ast}\quad \mbox{and}\quad 1<\alpha(x):=\frac{s(x)q(x)}{s(x)-q(x)}<(\phi_{0})^{\ast}, \quad \forall x\in \overline{\Omega}.
	\end{align}
	Thus,
	\begin{align*}
		\max_{x\in \overline{\Omega}}{s^{\prime}(x)q(x)}:=s^{\prime}(x_{0})q(x_0)<(\phi_{0})^{\ast}\quad \mbox{and}\quad  \max_{x\in \overline{\Omega}}{\alpha(x)}:=\alpha(x_0)<(\phi_{0})^{\ast}.
	\end{align*}
	Using Lemma \ref{min-max for Phi ast} and   $(H_5)$, we obtain
	 \begin{equation}\label{compact embedding 1}
	 	 \lim\limits_{t\to +\infty}\frac{|kt|^{s^{\prime}(x)q(x)}}{\Phi_{\ast}(x,t)}\leq \frac{k^{s^{\prime}(x_0)q(x_0)}}{\Phi_{\ast}(x,1)}\lim\limits_{t\to +\infty} \frac{1}{t^{(\phi_{0})^{\ast}-s^{\prime}(x_0)q(x_0)}}= 0 \; \mbox{ uniformly for } x\in \Omega.
	 \end{equation}
	 Using the same arguments above we show that
	 \begin{align}\label{ compact embedding alpha}
	 	 \lim\limits_{t\to +\infty}\frac{|kt|^{\alpha(x)}}{\Phi_{\ast}(x,t)}=0\; \mbox{ uniformly for } x\in \Omega.
	 \end{align}
	Hence,  (\ref{compact embedding 1}) and (\ref{ compact embedding alpha}) imply that  $|t|^{s^{\prime}(x)q(x)}\ll\Phi_{\ast}$ and $|t|^{\alpha(x)}\ll\Phi_{\ast}$ respectively.  Thus,  from Theorem \ref{compact embedding theorem1 } we have the following compact embeddings
	\begin{align}\label{ compact embedding of s'q}
	W_{0}^{1,\Phi}(\Omega)\hookrightarrow\hookrightarrow L^{s^{\prime}(x)q(x)}(\Omega),
	\end{align}
	and
	\begin{align}\label{ compact embedding of alpha 2}
	W_{0}^{1,\Phi}(\Omega) \hookrightarrow\hookrightarrow L^{\alpha(x)}(\Omega).
	\end{align}	
 Now, by using the H\"{o}lder   inequality,  Proposition \ref{proposition inequality}, and (\ref{ compact embedding of s'q}), we have for all $u$ in $ W_{0}^{1,\Phi}(\Omega)$
	\begin{align}\label{boundedness of V}
	|\mathcal{J}(u)|\leq c_0\|V\|_{s(x)}\||u|^{q(x)}\|_{s^{\prime}(x)}&\leq c_1\|V\|_{s(x)} \max\{\|u\|^{q_{-}}_{s^{\prime}(x)q(x)},\|u\|^{q_{+}}_{s^{\prime}(x)q(x)}\}\\ \nonumber
	&\leq c_2\|V\|_{s(x)} \max\{\|u\|^{q_{-}}_{1,\Phi},\|u\|^{q_{+}}_{1,\Phi}\}
	\end{align}
	where $c_{i}, i=0,1,2$ are positive constants. Hence, $\mathcal{J}$ is well defined.
	Moreover, since $q^{+}<(\phi_{0})^{\ast}$ then, as in the proof of  relation (\ref{compact embedding 1}), the space  $W_{0}^{1,\Phi}(\Omega)$ is compactly embedded in $L^{q(x)}(\Omega)$. Hence,   using (\ref{ compact embedding of alpha 2}) and   following the same arguments as in the proof of \cite[Proposition 2]{Kefi},  we obtain $\mathcal{J}\in \mathcal{C}^{1}(W_{0}^{1,\Phi}(\Omega),\mathbb{R})$ and
	$$\langle \mathcal{J}^{\prime}(u),v\rangle= \int_{\Omega}V(x)|u|^{q(x)-2}uvdx,\quad \forall u, v\in W_{0}^{1,\Phi}(\Omega).$$
	Finally, from the properties of $\Psi$,  $\Psi(x,k)$ is bounded for any positive constant $k$. Using Lemma \ref{min-max for Phi}  and the fact that  $\psi^{0}<(\phi_{0})^{\ast}$ we obtain for any $k>0$
	\begin{equation}\label{compact embedding of Phi in Psi}
	\lim\limits_{t\to +\infty}\frac{\Psi(x,kt)}{\Phi_{\ast}(x,t)}\leq \frac{\Psi(x,k)}{\Phi_{\ast}(x,1)}\lim\limits_{t\to +\infty} \frac{1}{t^{(\phi_{0})^{\ast}-\psi^{0}}}= 0 \; \mbox{ uniformly for } x\in \Omega.
	\end{equation}
	Hence,  $\Psi\ll\Phi_{\ast}$, which implies by Theorem \ref{compact embedding theorem1 } that
	\begin{align}\label{compact embedded of Psi}
	W_{0}^{1,\Phi}(\Omega) \hookrightarrow\hookrightarrow L^{\Psi}(\Omega).
	\end{align}
	 Consequently, from  $(\ref{growth of f})$,  the functional $\mathcal{F}$ is well defined and $\mathcal{F}\in \mathcal{C}^{1}(W_{0}^{1,\Phi}(\Omega),\mathbb{R})$ with its derivative  given by $$\langle\mathcal{F}'(u), v\rangle=\int_{\Omega}f(x,u)vdx,\quad \forall u, v\in W_{0}^{1,\Phi}(\Omega).$$
	 The proof of this  proposition is now complete.
\end{proof}
\begin{proposition}\label{S plus}
	~~
	\begin{itemize}
		\item[i)] The mapping $\mathcal{H}^{\prime} : W_{0}^{1,\Phi}(\Omega)\to (W_{0}^{1,\Phi}(\Omega))^{\ast}$ defined by
		\begin{align}
		\langle\mathcal{H}^{\prime}(u),v \rangle= \int_{\Omega}\phi(x,|\nabla u|)\nabla u \nabla vdx, \quad\forall u, v\in W_{0}^{1,\Phi}(\Omega),
		\end{align}
		is bounded, coercive, strictly monotone homeomorphism, and is of type $(S_{+})$,  namely,
		\begin{align*}
		u_{n}\rightharpoonup u \mbox{ in } W_{0}^{1,\Phi}(\Omega) \;\mbox{ and }\; \limsup_{n\to\infty}\langle \mathcal{H}^{\prime}(u_n), u_{n}-u\rangle\leq 0\; \mbox{ imply that }\; u_{n}\to u\; \mbox{ in } W_{0}^{1,\Phi}(\Omega),
		\end{align*}
		where $\rightharpoonup $ and $\to$  denote the weak and strong convergence in $W_{0}^{1,\Phi}(\Omega)$, respectively.
		\item[ii)] The functional $\mathcal{F}$ is sequentially weakly continuous, namely, $u_n \rightharpoonup u$ in $W_{0}^{1,\Phi}(\Omega)$ implies $\mathcal{F}(u_n)\to \mathcal{F}(u)$.  In addition, the mapping $\mathcal{F}' : W_{0}^{1,\Phi}(\Omega) \to (W_{0}^{1,\Phi}(\Omega))^{\ast} $ defined by
		\begin{align}
				\langle\mathcal{F}'(u), v\rangle=\int_{\Omega}f(x,u)vdx,\quad \forall u, v\in W_{0}^{1,\Phi}(\Omega),
		\end{align}
		 is a completely continuous linear operator.
	\end{itemize}
\end{proposition}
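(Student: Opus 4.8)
\textbf{Proof proposal for Proposition \ref{S plus}.}

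The plan is to establish part (i) and part (ii) separately, relying heavily on the structural lemmas already proved for $\Phi$, $\tilde\Phi$, $\Phi_{\ast}$ and on the compact embedding $W_0^{1,\Phi}(\Omega)\hookrightarrow\hookrightarrow L^{\Psi}(\Omega)$ obtained in \eqref{compact embedded of Psi}. For part (i), I would first check that $\mathcal{H}^{\prime}$ is well defined and bounded: by Proposition \ref{proposition of Phi}(4), $\phi(x,|\nabla u|)\nabla u\in L^{\tilde\Phi}(\Omega)$ whenever $\nabla u\in L^{\Phi}(\Omega)$, and the H\"older-type inequality of Proposition \ref{proposition of Phi}(3) together with Lemmas \ref{min-max for Phi} and \ref{min-max for Phi tilde} gives the bound. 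Coercivity follows from Lemma \ref{min-max for Phi}: taking $v=u$ and using the lower estimate $\langle\mathcal{H}^{\prime}(u),u\rangle=\int_{\Omega}\phi(x,|\nabla u|)|\nabla u|^{2}dx\geq \phi_0\int_{\Omega}\Phi(x,|\nabla u|)dx\geq \phi_0\min\{\|\nabla u\|_\Phi^{\phi_0},\|\nabla u\|_\Phi^{\phi^0}\}$, which tends to $+\infty$ with $\|u\|_{1,\Phi}$ (recalling $\|\nabla u\|_\Phi$ is an equivalent norm by Theorem \ref{compact embedding theorem2}). Strict monotonicity is the consequence of the strict convexity of $t\mapsto\Phi(x,t)$, equivalently of the strict monotonicity of $t\mapsto\phi(x,t)t$ assumed in $(\phi)$; pointwise one uses the standard vector inequality $\big(\phi(x,|\xi|)\xi-\phi(x,|\eta|)\eta\big)\cdot(\xi-\eta)>0$ for $\xi\neq\eta$, which follows from convexity of $\Phi(x,\cdot)$ applied to $|\xi|,|\eta|$. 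A strictly monotone, coercive, continuous operator on a reflexive Banach space is a bijection with continuous inverse by the Browder--Minty theorem, giving the homeomorphism claim.

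The substantive part of (i) is the $(S_+)$ property, and I expect this to be the main obstacle. Here I would argue as follows: suppose $u_n\rightharpoonup u$ and $\limsup_n\langle\mathcal{H}^{\prime}(u_n),u_n-u\rangle\leq0$. Since $\mathcal{H}^{\prime}$ is monotone, $\langle\mathcal{H}^{\prime}(u_n)-\mathcal{H}^{\prime}(u),u_n-u\rangle\geq0$, and since $u_n-u\rightharpoonup0$ we get $\langle\mathcal{H}^{\prime}(u),u_n-u\rangle\to0$; combining, $\langle\mathcal{H}^{\prime}(u_n)-\mathcal{H}^{\prime}(u),u_n-u\rangle\to0$. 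Then I would use the pointwise monotonicity inequality above to conclude that $\big(\phi(x,|\nabla u_n|)\nabla u_n-\phi(x,|\nabla u|)\nabla u\big)\cdot\nabla(u_n-u)\to0$ in $L^1(\Omega)$, pass to a subsequence along which this quantity converges to $0$ a.e., and deduce $\nabla u_n\to\nabla u$ a.e. in $\Omega$ (this uses that the map $\xi\mapsto\phi(x,|\xi|)\xi$ is a strictly monotone homeomorphism of $\mathbb{R}^N$ for a.e.\ fixed $x$, so the vanishing of the monotonicity defect forces $\nabla u_n\to\nabla u$). Then, via the Brezis--Lieb / Fatou-type argument adapted to the modular (using the $(\Delta_2)$-condition and Proposition \ref{proposition of Phi}(2a)), one upgrades a.e.\ convergence of $\nabla u_n$ plus boundedness of $\int_\Omega\Phi(x,|\nabla u_n|)dx$ and convergence of the modulars to $\nabla u_n\to\nabla u$ in $L^\Phi(\Omega)$, hence $u_n\to u$ in $W_0^{1,\Phi}(\Omega)$ by the Poincar\'e inequality of Theorem \ref{compact embedding theorem2}. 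The delicate point is justifying the passage from the $L^1$-convergence of the monotonicity defect to a.e.\ convergence of the gradients and then to norm convergence; this is exactly the $(S_+)$ mechanism and I would model it on the proof of \cite[Lemma 4.2]{Mihai} referenced in Proposition \ref{the functional I}, or on \cite{P. Zhao, X. Fan}.

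For part (ii), the argument is softer. Given $u_n\rightharpoonup u$ in $W_0^{1,\Phi}(\Omega)$, the compact embedding \eqref{compact embedded of Psi} gives $u_n\to u$ strongly in $L^{\Psi}(\Omega)$, hence (up to a subsequence) $u_n\to u$ a.e.\ in $\Omega$ and $\Psi(\cdot,|u_n|)\to\Psi(\cdot,|u|)$ in $L^1(\Omega)$ by Proposition \ref{proposition of Phi}(2b). From the growth bound \eqref{growth of f}, $|F(x,t)|\leq C_1\Psi(x,|t|)+h(x)|t|$, and using the generalized dominated convergence theorem (with dominating sequence controlled by $\Psi(\cdot,|u_n|)$ plus $h\,|u_n|$, the latter handled by the H\"older inequality of Proposition \ref{proposition of Phi}(3) and strong $L^\Psi$-convergence) one gets $\mathcal{F}(u_n)\to\mathcal{F}(u)$; a subsequence argument removes the passage to a subsequence since the limit is unique. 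For the complete continuity of $\mathcal{F}^{\prime}$: if $u_n\rightharpoonup u$, then again $u_n\to u$ in $L^\Psi(\Omega)$ and a.e.; since $t\mapsto f(x,t)$ has the Nemytskii-type growth \eqref{growth of f} with $\psi(x,|t|)$ controlled modularly by $\Psi$, the Nemytskii operator $u\mapsto f(\cdot,u)$ is continuous from $L^\Psi(\Omega)$ into $L^{\tilde\Psi}(\Omega)$ (standard, using the $(\Delta_2)$-condition on $\Psi$ and $\tilde\Psi$), whence $f(\cdot,u_n)\to f(\cdot,u)$ in $L^{\tilde\Psi}(\Omega)$; then for any $v$ with $\|v\|_{1,\Phi}\leq1$, by Proposition \ref{proposition of Phi}(3) and the embedding \eqref{compact embedded of Psi},
\begin{equation*}
|\langle\mathcal{F}^{\prime}(u_n)-\mathcal{F}^{\prime}(u),v\rangle|\leq 2\|f(\cdot,u_n)-f(\cdot,u)\|_{\tilde\Psi}\|v\|_{\Psi}\leq C\|f(\cdot,u_n)-f(\cdot,u)\|_{\tilde\Psi}\to0,
\end{equation*}
which is norm convergence $\mathcal{F}^{\prime}(u_n)\to\mathcal{F}^{\prime}(u)$ in $(W_0^{1,\Phi}(\Omega))^{\ast}$, i.e.\ $\mathcal{F}^{\prime}$ is completely continuous. (I note that "linear" in the statement should be read as an imprecision; the content is complete continuity.)
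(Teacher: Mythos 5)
The paper does not actually prove this proposition: it disposes of item (i) by citing \cite[Theorem 2.2]{X. Fan2} and of item (ii) by citing \cite[Lemma 4.1]{P. Zhao}. Your proposal reconstructs the content of those references, and the route you take --- H\"older plus Proposition \ref{proposition of Phi}(4) for boundedness, Lemma \ref{min-max for Phi} for coercivity, pointwise strict monotonicity of $\xi\mapsto\phi(x,|\xi|)\xi$, the standard $(S_+)$ mechanism (monotonicity defect $\to 0$ in $L^1$, a.e.\ convergence of gradients, upgrade to modular and then norm convergence via $(\Delta_2)$), and compact embedding into $L^{\Psi}(\Omega)$ plus Nemytskii continuity for part (ii) --- is exactly the standard one. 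You also correctly flag that ``linear'' in the statement of (ii) is an imprecision, since $u\mapsto f(\cdot,u)$ is nonlinear; only complete continuity is meant.

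Three points deserve tightening if this sketch were to be written out. First, the Browder--Minty theorem gives bijectivity of a bounded, coercive, demicontinuous, strictly monotone operator, but continuity of the inverse is not part of that theorem; it is usually deduced \emph{from} the $(S_+)$ property (if $\mathcal{H}^{\prime}(u_n)\to \mathcal{H}^{\prime}(u)$ strongly, then boundedness and reflexivity give $u_n\rightharpoonup u$ along a subsequence, and $\langle\mathcal{H}^{\prime}(u_n),u_n-u\rangle\to0$ then forces $u_n\to u$), so the logical order should be $(S_+)$ first, homeomorphism second. Second, in the $(S_+)$ step the passage from a.e.\ vanishing of $\bigl(\phi(x,|\nabla u_n|)\nabla u_n-\phi(x,|\nabla u|)\nabla u\bigr)\cdot\nabla(u_n-u)$ to $\nabla u_n\to\nabla u$ a.e.\ needs the intermediate observation that the defect controls $|\nabla u_n(x)|$ from above (via the coercivity $\phi(x,t)t\to+\infty$ in $(\phi)$), so that one may extract pointwise limit points before invoking strict monotonicity. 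Third, the ``convergence of the modulars'' $\mathcal{H}(u_n)\to\mathcal{H}(u)$ that you use in the final upgrade is not free: it follows from convexity, $\mathcal{H}(u)\geq\mathcal{H}(u_n)+\langle\mathcal{H}^{\prime}(u_n),u-u_n\rangle$, together with the hypothesis $\limsup_n\langle\mathcal{H}^{\prime}(u_n),u_n-u\rangle\leq0$ and weak lower semicontinuity of $\mathcal{H}$. None of these is a gap in the sense of a wrong idea --- they are the standard fillers, and you point to the correct sources \cite{Mihai,P. Zhao,X. Fan} for them.
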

\begin{proof}
  We refer the reader to \cite[Theorem 2.2]{X. Fan2} for the proof of the first item    and to  \cite[Lemma 4.1]{P. Zhao} for that of the second one.
\end{proof}
We note that, by  Proposition \ref{the functional I} and  Definition \ref{definition of weak solution}, $u$ is a weak solution of problem $(P)$ if and only if $u$ is a critical point of the functional $\mathcal{I}$. Hence, we shall use  critical point theory tools to show our main results.

 To establish Theorem \ref{main theorem 1}  we will prove that the functional  $\mathcal{I}$ has a global minimum. 
\begin{proof}[Proof of Theorem \ref{main theorem 1}] Firstly, we show that $\mathcal{I}$ is coercive, namely, $\mathcal{I}(u)\to +\infty$ as $\|u\|_{1,\Phi}\to +\infty$. From $(\ref{growth of f})$, we have
	$$|F(x,t)|\leq C_{0}\Psi(x,t)+h(x)|t|, \quad \forall (x,t)\in \Omega\times \mathbb{R}.$$
Then, by applying Lemma \ref{min-max for Phi}, Poincaré	and H\"{o}lder's  inequalities, and  using similar arguments as in the proof of relation $(\ref{boundedness of V})$ we obtain
\begin{align*}
\mathcal{I}(u)&=\int_{\Omega}\Phi(x,|\nabla u|)dx+\int_{\Omega}\frac{V(x)}{q(x)}|u|^{q(x)}dx -\int_{\Omega}F(x,u)dx\\
&\geq \|u\|^{\phi_{0}}_{1,\Phi}-c_{1}\|V\|_{s(x)} \|u\|^{q_{+}}_{1,\Phi}-c_{2}\|u\|^{\psi^{0}}_{\Psi}-c_{3}\|h\|_{\tilde{\Psi}}\|u\|_{\Psi}.
\end{align*}
Using the  fact that $W_{0}^{1,\Phi}(\Omega)$ is compactly embedded in $L^{\Psi}(\Omega)$ (see the proof of relation (\ref{compact embedded of Psi})),  the previous inequality becomes
\begin{align*}
\mathcal{I}(u)\geq \|u\|^{\phi_{0}}_{1,\Phi}-c_{1}\|V\|_{s(x)} \|u\|^{q_{+}}_{1,\Phi}-c^{\prime}_{2}\|u\|^{\psi^{0}}_{1,\Phi}-c^{\prime}_{3}\|h\|_{\tilde{\Psi}}\|u\|_{1,\Phi}.
\end{align*}
  Since $1<q^{+}<\phi_{0}$ and $\psi^{0}<\phi_{0}$, we then have  $\mathcal{I}(u)\to +\infty$ as $\|u\|_{1,\Phi}\to +\infty$. To complete the proof we show that the functional $\mathcal{I}$ is weakly lower semi-continuous, namely, $u_n\rightharpoonup u$ in $W_{0}^{1,\Phi}(\Omega)$ implies $\mathcal{I}(u)\leq \liminf_{n\to \infty}\mathcal{I}(u_n)$.
 Suppose that $u_n \rightharpoonup u$ in $W_{0}^{1,\Phi}(\Omega)$. Since  the functional $\mathcal{H}\in \mathcal{C}^{1}(W_{0}^{1,\Phi}(\Omega),\mathbb{R})$ is  strictly convex ($\mathcal{H}' $ is strictly monotone), then we have $\mathcal{H}(u_n)>\mathcal{H}(u)+\langle \mathcal{H}^{\prime}(u),u_n -u \rangle$ which implies that   $\mathcal{H}$ is weakly lower semi-continuous on $W_{0}^{1,\Phi}(\Omega)$. Concerning the functional $\mathcal{J}$; since $W_{0}^{1,\Phi}(\Omega)$  is compactly embedded in $L^{s^{\prime}(x)q(x)}(\Omega)$ (see the proof of relation (\ref{ compact embedding of s'q})), then $u_n \to u$ in $L^{s^{\prime}(x)q(x)}(\Omega)$. This fact combined with relation (\ref{boundedness of V}) yields that $\mathcal{J}(u_n)\to \mathcal{J}(u)$. Finally, from Proposition \ref{S plus} -ii),  $\mathcal{F}$ is sequentially weakly continuous. Then,  we have $\mathcal{F}(u_n)\to \mathcal{F}(u)$.
\end{proof}
 We shall now prove Theorem \ref{main theorem 2} by using the Mountain Pass Theorem (see \cite{AR}). Since the proof of this theorem is quite long, we will divide it into several lemmas. Firstly, we show that the functional $\mathcal{I}$ has a geometrical structure. Secondly, we show that  $\mathcal{I}$ satisfies the Palais-Smale condition at level $\tilde{c}$ (see the Definition \ref{ Palais-Smale condition} below). To this end, we show that any Palais-Smale sequence   at the level $\tilde{c}$ for $\mathcal{I}$ (see the Definition \ref{ Palais-Smale sequence}) is  bounded in $ W_{0}^{1,\Phi}(\Omega)$,  and then has  a strongly convergent subsequence.

 Let us start by the following lemma.
 \begin{lemma}\label{estimation of Vu}
 	Assume that    $\phi^{0}<q^{-}\leq  q^{+}<(\phi_{0})^{\ast},$ $q^{+}-\frac{1}{2}\phi_{0}<q^{-}$ and $s(x)>\frac{q(x)(\phi_{0})^{\ast}}{(\phi_{0})^{\ast}-q(x)}$ for every $x\in \overline{\Omega}$. Then, for any function $V\in L^{s(x)}(\Omega)$ we have
 	\begin{equation}
 	\int_{\Omega}|V(x)||u|^{q(x)}dx\leq C\|V\|^{\frac{\alpha}{r^{-}}}_{s(x)} \left[  M_{1}+ M_{2}\left(\|u\|^{2(q^{+}-\theta)}_{1,\Phi}+\|u\|^{2\frac{\theta r^{+}}{r^{-}}}_{1,\Phi}\right) \right],
 	\end{equation}
 	 	where $\alpha =r^{+}$ if $\|V\|_{s(x)}>1$ and $\alpha=r^{-}$ if $\|V\|_{s(x)}\leq 1$, and $2\frac{\theta r^{+}}{r^{-}}< 2(q^{-}-\theta)<2(q^{+}-\theta)<\phi_{0}$ for  some  measurable function $r$ and  positive constants $C$ and $\theta$.
 \end{lemma}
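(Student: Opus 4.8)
The plan is to estimate the weighted integral by first \emph{trading part of the exponent} $q(x)$ --- which may exceed $\phi_{0}$, since $q^{-}>\phi^{0}\ge\phi_{0}$ by (\ref{delta2}) --- for the extra integrability carried by $V\in L^{s(x)}(\Omega)$, via an elementary Young-type splitting, and then pairing the two resulting pieces with $V$ through two different generalized H\"older exponents. Two facts will be used throughout: exactly as in the derivation of (\ref{compact embedding 1}) in the proof of Proposition \ref{the functional I}, the hypothesis $s(x)>\frac{q(x)(\phi_{0})^{\ast}}{(\phi_{0})^{\ast}-q(x)}$ on $\overline{\Omega}$ is equivalent to $q(x)s'(x)<(\phi_{0})^{\ast}$ on $\overline{\Omega}$; and by Theorem \ref{compact embedding theorem1 } together with Lemma \ref{min-max for Phi ast}, every exponent $m\in\mathcal{C}_{+}(\overline{\Omega})$ with $\max_{\overline{\Omega}}m<(\phi_{0})^{\ast}$ yields a continuous embedding $W_{0}^{1,\Phi}(\Omega)\hookrightarrow L^{m(x)}(\Omega)$.

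\emph{Choice of parameters.} First fix a positive constant $\theta$ and a measurable, essentially bounded exponent $r$ with $1<r^{-}\le r^{+}<\infty$ and $r(x)\le s(x)$ on $\overline{\Omega}$, arranged so that $2(q^{+}-\theta)<\phi_{0}$ (so $\theta$ lies just above $q^{+}-\tfrac12\phi_{0}$, which is where the hypothesis $q^{+}-\tfrac12\phi_{0}<q^{-}$ is used to keep the admissible range of $\theta$ nonempty), so that $2(q(x)-\theta)s'(x)<q(x)s'(x)<(\phi_{0})^{\ast}$ and $2\theta\,r'(x)<(\phi_{0})^{\ast}$ (which is where the slack in the hypothesis on $s$ is spent, letting $r$ be chosen between $\frac{(\phi_{0})^{\ast}}{(\phi_{0})^{\ast}-2\theta}$ and $s(x)$), and finally so that $r^{+}/r^{-}$ is close enough to $1$ that $2\theta r^{+}/r^{-}<2(q^{-}-\theta)$; this last inequality, together with $2(q^{-}-\theta)\le 2(q^{+}-\theta)<\phi_{0}$, is the chain in the statement. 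I expect this simultaneous calibration of $\theta$ and $r$ to be the only genuinely delicate point of the proof.

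\emph{The splitting and the two pieces.} By the elementary inequality $ab\le\tfrac12 a^{2}+\tfrac12 b^{2}$ with $a=|u|^{q(x)-\theta}$ and $b=|u|^{\theta}$,
\[
\int_{\Omega}|V|\,|u|^{q(x)}\,dx\ \le\ \tfrac12\int_{\Omega}|V|\,|u|^{2(q(x)-\theta)}\,dx\ +\ \tfrac12\int_{\Omega}|V|\,|u|^{2\theta}\,dx\ =:\ \tfrac12 I_{1}+\tfrac12 I_{2}.
\]
For $I_{1}$: the H\"older inequality in $L^{s(x)},L^{s'(x)}$ gives $I_{1}\le c\|V\|_{s(x)}\,\big\||u|^{2(q(x)-\theta)}\big\|_{s'(x)}$, and since $2(q(x)-\theta)s'(x)<(\phi_{0})^{\ast}$, Proposition \ref{proposition inequality} and the embedding $W_{0}^{1,\Phi}(\Omega)\hookrightarrow L^{2(q(x)-\theta)s'(x)}(\Omega)$ give $\big\||u|^{2(q(x)-\theta)}\big\|_{s'(x)}\le c\big(\|u\|_{1,\Phi}^{2(q^{-}-\theta)}+\|u\|_{1,\Phi}^{2(q^{+}-\theta)}\big)$. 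For $I_{2}$: the H\"older inequality in $L^{r(x)},L^{r'(x)}$ gives $I_{2}\le c\|V\|_{r(x)}\,\big\||u|^{2\theta}\big\|_{r'(x)}$; since $2\theta\,r'(x)<(\phi_{0})^{\ast}$, combining the norm--modular relations (\ref{min and max}) for the exponents $r'$ and $2\theta\,r'$ with the embedding $W_{0}^{1,\Phi}(\Omega)\hookrightarrow L^{2\theta r'(x)}(\Omega)$ bounds $\big\||u|^{2\theta}\big\|_{r'(x)}$ by $c\big(1+\|u\|_{1,\Phi}^{2\theta r^{+}/r^{-}}\big)$, while the relations (\ref{min and max}) for the exponent $r$ together with $L^{s(x)}(\Omega)\hookrightarrow L^{r(x)}(\Omega)$ (valid since $r\le s$ and $|\Omega|<\infty$) bound $\|V\|_{r(x)}$ by $c\|V\|_{s(x)}^{\alpha/r^{-}}$, where $\alpha=r^{+}$ if $\|V\|_{s(x)}>1$ and $\alpha=r^{-}$ if $\|V\|_{s(x)}\le 1$.

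\emph{Conclusion.} Collecting the two bounds and using $\|V\|_{s(x)}\le\|V\|_{s(x)}^{\alpha/r^{-}}$ and $\|u\|_{1,\Phi}^{2(q^{-}-\theta)}\le 1+\|u\|_{1,\Phi}^{2(q^{+}-\theta)}$ (as $2(q^{-}-\theta)\le 2(q^{+}-\theta)$), one absorbs all additive constants into a single $M_{1}$ and the remaining powers of $\|u\|_{1,\Phi}$ into $M_{2}\big(\|u\|_{1,\Phi}^{2(q^{+}-\theta)}+\|u\|_{1,\Phi}^{2\theta r^{+}/r^{-}}\big)$, which is exactly (\ref{estimation of Vu}). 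The only substantive step is the parameter calibration above; everything else is routine bookkeeping with the generalized H\"older inequality, Proposition \ref{proposition inequality}, the relations (\ref{min and max}), and the compact embeddings of Theorem \ref{compact embedding theorem1 }.
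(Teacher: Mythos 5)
Your decomposition is genuinely different from the paper's, and the difference is not cosmetic --- it is precisely where the argument breaks. The paper never applies Young's inequality pointwise: it factors $|V||u|^{q(x)}=\bigl(|V||u|^{\theta}\bigr)\cdot|u|^{q(x)-\theta}$ and applies the generalized H\"older inequality with exponents $r(x),(r(x))'$, then a second H\"older step with exponents $s(x)/r(x)$ and its conjugate inside the first factor, and only at the very end applies $ab\le\tfrac12(a^{2}+b^{2})$ to the two resulting \emph{norms}, i.e.\ to real numbers. In that ordering the power of $|u|$ that must be integrated against $V$ is $\theta$, and the integrability requirement is $\theta r(x)(s(x)/r(x))'<(\phi_{0})^{\ast}$, which is what the upper bound in (\ref{first prop of r}) arranges. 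You square pointwise first, so your second piece is $\int_{\Omega}|V||u|^{2\theta}dx$, and putting $|u|^{2\theta}$ into a space dual to $V$ forces $2\theta r'(x)\le(\phi_{0})^{\ast}$, in particular $2\theta<(\phi_{0})^{\ast}$. That does not follow from the hypotheses: the requirement $2(q^{+}-\theta)<\phi_{0}$ forces $\theta>q^{+}-\tfrac12\phi_{0}$, while the lemma only assumes $q^{+}<(\phi_{0})^{\ast}$. Take $N=10$, $\phi_{0}=2$ (so $(\phi_{0})^{\ast}=2.5$) and $q\equiv 2.4$: then $\theta>1.4$, so $2\theta>2.8>(\phi_{0})^{\ast}$, and $|u|^{2\theta}$ need not even belong to $L^{1}(\Omega)$ for $u\in W_{0}^{1,\Phi}(\Omega)$; your $I_{2}$ is simply not controlled, and no choice of $r$ "between $(\phi_{0})^{\ast}/((\phi_{0})^{\ast}-2\theta)$ and $s(x)$" exists. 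Your treatment of $I_{1}$ is fine (there $2(q(x)-\theta)<q(x)$ because $\theta>q^{+}-\tfrac12\phi_{0}>q^{+}/2$, modulo the $>1$ requirement in Proposition \ref{proposition inequality}), but the $I_{2}$ half cannot be repaired inside your splitting: any pointwise factorization that isolates $|u|^{2\theta}$ against $V$ alone needs $2\theta$ subcritical, whereas Young applied to norms, as in the paper, costs nothing beyond what the single H\"older step already required.

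A second remark, since you flag the calibration of $\theta$ and $r$ as the delicate point: your instinct is right, and in fact the full chain $2\theta r^{+}/r^{-}<2(q^{-}-\theta)<2(q^{+}-\theta)<\phi_{0}$ asserted in the statement admits no solution under the lemma's own hypotheses. The first inequality gives $\theta(1+r^{+}/r^{-})<q^{-}$, hence $\theta<q^{-}/2$; the last gives $\theta>q^{+}-\tfrac12\phi_{0}\ge q^{-}-\tfrac12\phi_{0}>q^{-}/2$, using $q^{-}>\phi^{0}\ge\phi_{0}$ from (\ref{delta2}). So the window for $\theta$ is empty, and the set defined by (\ref{first prop of r})--(\ref{second prop of r}) in the paper's own proof is empty as well. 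That defect lies in the statement and afflicts both arguments equally; but your proof has, in addition, the independent failure of the $I_{2}$ estimate described above, which would persist even if the exponent bookkeeping in (\ref{estimation of Vu}) were weakened to something feasible.
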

 \begin{proof}
 	Since we have $q^{+}-\frac{1}{2}\phi_{0}<q^{-}$, then there exists $\theta>0$ such that $q^{+}-\frac{1}{2}\phi_{0}<\theta<q^{-}$.  This fact implies that $2(q^{-}-\theta)<2(q^{+}-\theta)<\phi_{0}$ and $1+\theta-q^{+}>0$. Let $r$ be any measurable function  satisfying,
 	\begin{align}
 	\max&\left\{\frac{s(x)}{1+\theta s(x)},\frac{(\phi_{0})^{\ast}}{(\phi_{0})^{\ast}+\theta-q(x)} \label{first prop of r} \right\}<r(x)<\min\left\{\frac{s(x)(\phi_{0})^{\ast}}{(\phi_{0})^{\ast}+\theta s(x)},\frac{1}{1+\theta-q(x)} \right\},\\
 	&\theta\left(\frac{r^{+}}{r^{-}}+1\right)<q^{-}, \qquad\forall x\in \Omega. \label{second prop of r}
 	\end{align}
 	It is clear that $r\in L^{\infty}(\Omega)$ and $1<r(x)<s(x)$. Now, by using H\"{o}lder's inequality, we get
 	\begin{align}\label{first calculation}
 	\int_{\Omega}|V(x)||u|^{q(x)}dx \leq C\|V|u|^{\theta}\|_{r(x)}\||u|^{q(x)-\theta}\|_{(r(x))^{\prime}}
 	\end{align}
 	Without loss of generality, we may assume that $\|V(x)|u|^{\theta}\|_{r(x)}>1$. Using again H\"{o}lder's inequality,  $(\ref{min and max})$, and  Proposition \ref{proposition inequality}, we obtain
 	\begin{align}\nonumber
 	\|V|u|^{\theta}\|_{r(x)}&\leq \left[ \int_{\Omega}|V(x)|^{r(x)}|u|^{\theta r(x)}dx\right]^{\frac{1}{r^{-}}}\\ \nonumber
 	&\leq C_{1}\||V|^{r(x)}\|^{\frac{1}{r^{-}}}_{\frac{s(x)}{r(x)}}   \||u|^{\theta r(x)}\|^{\frac{1}{r^{-}}}_{(\frac{s(x)}{r(x)})^{\prime}}\\ \label{last relation 1}
 	&\leq C_{2}\|V\|^{\frac{\alpha}{r^{-}}}_{s(x)}\left(1+\|u\|^{\frac{\theta r^{+}}{r^{-}}}_{\theta r(x)(\frac{s(x)}{r(x)})^{\prime}}\right) ,
 	\end{align}
 	where $\alpha =r^{+}$ if $\|V\|_{s(x)}>1$ and $\alpha=r^{-}$ if $\|V\|_{s(x)}\leq 1$. \\
 	Using the same arguments as above, we obtain
 	\begin{align}\label{last relation 2}
 	\||u|^{q(x)-\theta}\|_{(r(x))^{\prime}}\leq 1+\|u\|^{q^{+}-\theta}_{(q(x)-\theta)(r(x))^{\prime}}.
 	\end{align}
 	Since $r(x)$ is chosen such that $(\ref{first prop of r})$  is fulfilled then
 	\begin{align*}
 	1<\theta r(x)\left(\frac{s(x)}{r(x)}\right)^{\prime}<(\phi_{0})^{\ast}\quad \mbox{ and }\quad 1<(q(x)-\theta)(r(x))^{\prime}<(\phi_{0})^{\ast}, \forall x\in \Omega.
 	\end{align*}
 Since $\Phi_{\ast}$ satisfies $(H_5)$, then by	using Lemma \ref{min-max for Phi ast},  we have   $|t|^{\theta r(x)\left(\frac{s(x)}{r(x)}\right)^{\prime}}\preccurlyeq \Phi_{\ast}$ and $|t|^{(q(x)-\theta)(r(x))^{\prime}}\preccurlyeq \Phi_{\ast}$, which imply that $L^{\Phi_{\ast}}(\Omega)$ is continuously embedded in $ L^{\theta r(x)\left(\frac{s(x)}{r(x)}\right)^{\prime}}(\Omega)$ and in $L^{(q(x)-\theta)(r(x))^{\prime}}(\Omega)$. Therefore, from Theorem \ref{compact embedding theorem1 },   $W_{0}^{1,\Phi}(\Omega)$ is continuously embedded in $L^{\theta r(x)\left(\frac{s(x)}{r(x)}\right)^{\prime}}(\Omega)$ and in $ L^{(q(x)-\theta)(r(x))^{\prime}}(\Omega)$. Consequently, the relations (\ref{last relation 1}) and (\ref{last relation 2}) become respectively
 	\begin{align}\label{third ineq}
 	\|V|u|^{\theta}\|_{r(x)} \leq C^{\prime}\|V\|^{\frac{\alpha}{r^{-}}}_{s(x)}\left(1+\|u\|^{\frac{\theta r^{+}}{r^{-}}}_{1,\Phi}\right)
 	\end{align}
 	and
 	\begin{align}\label{fourth ineq}
 	\||u|^{q(x)-\theta}\|_{(r(x))^{\prime}}\leq C^{\prime\prime}\left(1+\|u\|^{q^{+}-\theta}_{1,\Phi}\right).
 	\end{align}
 	Substituting (\ref{third ineq}) and (\ref{fourth ineq}) into (\ref{first calculation}), and  using Young's inequality  we obtain
 	\begin{align}\label{V.u^{q(x)}}
 	\int_{\Omega}|V(x)||u|^{q(x)}dx \leq C\|V\|^{\frac{\alpha}{r^{-}}}_{s(x)}\left[ M_{1}+ M_{2}\left(\|u\|^{2(q^{+}-\theta)}_{1,\Phi}+\|u\|^{2\frac{\theta r^{+}}{r^{-}}}_{1,\Phi}\right)\right],
 	\end{align}
 	where $C, M_{1},$ and $M_{2}$ are positive constants.
 \end{proof}
 \begin{lemma}\label{geometrical structure}
 Assume that the assumptions $(\Phi),$ $(f^{\prime}_0),$   $(f_{2})$ and $(f_3)$ hold. Furthermore, assume that $\phi^{0}<\min\{\psi_{0},q^{-}\},$ $\max\{\psi^{0},q^{+}\}<(\phi_{0})^{\ast},$ $q^{+}-\frac{1}{2}\phi_{0}<q^{-},$ and $s(x)>\frac{q(x)(\phi_{0})^{\ast}}{(\phi_{0})^{\ast}-q(x)}$ for every $x\in\overline{\Omega}$. Then, the functional $\mathcal{I}$ has a geometrical structure, that is, $\mathcal{I}$ satisfies the following properties
 \begin{enumerate}
 	\item [(i)] there exist $\rho>0$ and $\beta>0$ such that    $\mathcal{I}(u)\geq \beta$ for any $u\in W_{0}^{1,\Phi}(\Omega)$ with $\|u\|_{1,\Phi}=\rho$.
 	\item [(ii)] there exists  $u_{0}\in W_{0}^{1,\Phi}(\Omega)$ such that   $\|u_{0}\|_{1,\Phi}>\rho$ and  $\mathcal{I}(u_{0})\leq 0$.
 \end{enumerate} 	
 \end{lemma}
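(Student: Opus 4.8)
The plan is to verify the two Mountain Pass geometry conditions separately, using the decomposition $\mathcal{I}=\mathcal{H}+\mathcal{J}-\mathcal{F}$ together with the modulars' power-type estimates from Lemma \ref{min-max for Phi}, the compact embedding $W_0^{1,\Phi}(\Omega)\hookrightarrow\hookrightarrow L^{\Psi}(\Omega)$, the estimate \eqref{estimation of Vu} on the potential term, and the behaviour of $f$ near $0$ and at $\infty$ given by $(f_3)$ and $(f_2)$.

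For part (i), I would work with $\|u\|_{1,\Phi}=\rho$ small (so $\rho<1$), where Lemma \ref{min-max for Phi} gives $\mathcal{H}(u)\ge\|\nabla u\|_\Phi^{\phi^0}$ and, after the Poincar\'e inequality of Theorem \ref{compact embedding theorem2}, a lower bound of the form $c\|u\|_{1,\Phi}^{\phi^0}$. For the nonlinear term, condition $(f_3)$ says $f(x,t)=o(|t|\phi(x,t))$ as $t\to 0$, hence for every $\varepsilon>0$ there is $\delta>0$ with $|F(x,t)|\le\varepsilon\Phi(x,t)$ for $|t|\le\delta$; combined with $(f'_0)$ and the $\Delta_2$ property of $\Psi$, one controls the contribution of large values by $C_\varepsilon\Psi(x,t)$ (and hence, via the compact embedding \eqref{compact embedded of Psi} and Lemma \ref{min-max for Phi}, by $C_\varepsilon\|u\|_{1,\Phi}^{\psi_0}$ for $\|u\|_{1,\Phi}$ small, using $\psi_0>\phi^0$). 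For the potential term I would apply \eqref{estimation of Vu}: since the exponents there satisfy $2\tfrac{\theta r^+}{r^-}<2(q^--\theta)<2(q^+-\theta)<\phi_0$, on a small ball this term is bounded by $C\|V\|_{s(x)}^{\alpha/r^-}\|u\|_{1,\Phi}^{2\theta r^+/r^-}$ up to the additive constant $M_1$ — so I must be slightly careful, replacing \eqref{estimation of Vu} by a version \emph{without} the $M_1$ term valid for $\mathcal{J}$ directly, which is available because $\mathcal{J}(u)$ itself is $o(\|u\|_{1,\Phi}^{\phi_0})$ by \eqref{boundedness of V} with exponent $q^+>\phi_0>1$ near zero when $q^->1$; more precisely using $q^->1$ and $q^+<(\phi_0)^\ast$ directly in \eqref{boundedness of V} gives $|\mathcal{J}(u)|\le c\|V\|_{s(x)}\|u\|_{1,\Phi}^{q^-}$ for $\|u\|_{1,\Phi}\le 1$. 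Collecting, $\mathcal{I}(u)\ge c\rho^{\phi^0}-c'\rho^{q^-}-c''\rho^{\psi_0}-C_\varepsilon\rho^{\min\{\psi_0,\phi^0\}}$-type terms; since $q^->\phi^0$, $\psi_0>\phi^0$ and $2\theta r^+/r^->0$, every competing power exceeds $\phi^0$, so choosing $\varepsilon$ small and then $\rho$ small yields $\mathcal{I}(u)\ge\beta>0$.

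For part (ii), I would fix any $w\in W_0^{1,\Phi}(\Omega)\setminus\{0\}$, say $w\in\mathcal{C}_0^\infty(\Omega)$ nonnegative and nonzero, and estimate $\mathcal{I}(tw)$ as $t\to+\infty$. By Lemma \ref{min-max for Phi}, $\mathcal{H}(tw)\le t^{\phi^0}\mathcal{H}(w)$ for $t\ge 1$. The potential term is controlled by \eqref{estimation of Vu} applied to $tw$, giving a bound $\le C\|V\|_{s(x)}^{\alpha/r^-}(M_1+M_2(t^{2(q^+-\theta)}+t^{2\theta r^+/r^-})\|w\|_{1,\Phi}^{\cdots})$, and all these $t$-powers are $<\phi_0\le\phi^0$ by the exponent arithmetic of the preceding lemma. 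The decisive term is $\mathcal{F}(tw)=\int_\Omega F(x,tw)\,dx$: by $(f_2)$, $F(x,s)/|s|^{\phi^0}\to+\infty$ uniformly, so by Fatou's lemma $\mathcal{F}(tw)/t^{\phi^0}\to+\infty$; more carefully, for every $K>0$ there is $C_K$ with $F(x,s)\ge K|s|^{\phi^0}-C_K$ for all $(x,s)$, hence $\mathcal{F}(tw)\ge Kt^{\phi^0}\int_\Omega|w|^{\phi^0}-C_K|\Omega|$. Therefore $\mathcal{I}(tw)\le t^{\phi^0}\big(\mathcal{H}(w)-K\int_\Omega|w|^{\phi^0}\big)+(\text{lower-order powers of }t)$, and choosing $K$ large makes the leading coefficient negative, so $\mathcal{I}(tw)\to-\infty$; taking $t$ large enough gives $u_0=tw$ with $\|u_0\|_{1,\Phi}>\rho$ and $\mathcal{I}(u_0)\le 0$.

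The main obstacle I anticipate is part (i): making sure that \emph{every} competing term (the potential term, the near-zero part of $F$, and the large-value part of $F$) genuinely has a power of $\rho$ strictly larger than $\phi^0$, which forces one to use $(f_3)$ rather than merely $(f'_0)$ for the small-$|t|$ regime, and to use the exact inequalities $\phi^0<q^-$, $\phi^0<\psi_0$ and $q^+-\tfrac12\phi_0<q^-$ (the last one guaranteeing $2\theta r^+/r^->0$ is automatic but also that the potential exponents stay below $\phi_0$). Part (ii) is comparatively routine once $(f_2)$ is invoked in its ``$F(x,s)\ge K|s|^{\phi^0}-C_K$'' form, with the only point of care being that the potential's $t$-powers do not overtake $t^{\phi^0}$, which is exactly what the hypothesis $q^+-\tfrac12\phi_0<q^-$ was arranged to ensure.
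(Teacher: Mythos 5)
Your proposal is correct and follows essentially the same route as the paper: part (i) via the combined estimate $|F(x,t)|\leq \epsilon\Phi(x,t)+C(\epsilon)\Psi(x,t)$ from $(f'_0)$--$(f_3)$, Lemma \ref{min-max for Phi}, the compact embedding into $L^{\Psi}(\Omega)$ and the bound $|\mathcal{J}(u)|\leq c\|V\|_{s(x)}\|u\|_{1,\Phi}^{q^{-}}$ on a small sphere (exploiting $\phi^{0}<\min\{\psi_0,q^{-}\}$), and part (ii) via $F(x,t)\geq L|t|^{\phi^{0}}-C_L$ from $(f_2)$ together with Lemma \ref{estimation of Vu}, whose $t$-exponents stay below $\phi_0$. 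Your observation that the $M_1$-version of the potential estimate should be replaced by the one from the proof of Proposition \ref{the functional I} in part (i) is exactly what the paper does.
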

 \begin{proof}
 	(i)  Firstly, from $(f^{\prime}_0)$ and $(f_3)$ it follows that, for all given $\epsilon>0$ there exists $C(\epsilon)>0$,
 	such that
 	\begin{equation}
 	|F(x,t)|\leq \epsilon \Phi(x,t)+C(\epsilon) \Psi(x,t), \quad \forall (x,t)\in \Omega\times \mathbb{R}.
 	\end{equation}
 Using   Lemma \ref{min-max for Phi},  the Poincaré   inequality, and the fact that  $W_{0}^{1,\Phi}(\Omega)$ is compactly embedded in $L^{\Psi}(\Omega)$, we obtain
 \begin{equation}\label{boundedness of F}
 \int_{\Omega} |F(x,t)|dx \leq \epsilon \max\{ \|u\|_{1,\Phi}^{\phi_{0}}, \|u\|^{\phi^{0}}_{1,\Phi}\}+C^{\prime}(\epsilon)\max\{ \|u\|_{1,\Phi}^{\psi_{0}}, \|u\|^{\psi^{0}}_{1,\Phi}\}.
 \end{equation}
 Using the same arguments  as in the proof of relation $(\ref{boundedness of V})$,   we obtain
 \begin{align}\label{boundedness of V 2}
 \int_{\Omega}\frac{V(x)}{q(x)}|u|^{q(x)} \leq   C\|V\|_{s(x)} \max\{\|u\|^{q_{-}}_{1,\Phi},\|u\|^{q_{+}}_{1,\Phi}\}.
 \end{align}
 Now, by using the definition of $\mathcal{I}$ in $(\ref{definition of I})$, Lemma \ref{min-max for Phi},  and the relations $(\ref{boundedness of F})$-$(\ref{boundedness of V 2})$, we get
 \begin{align*}
 \mathcal{I}(u)&=\int_{\Omega}\Phi(x,|\nabla u|)dx+\int_{\Omega}\frac{V(x)}{q(x)}|u|^{q(x)}dx -\int_{\Omega}F(x,u)dx\\
 &\geq \min\{ \|u\|_{1,\Phi}^{\phi_{0}}, \|u\|_{1,\Phi}^{\phi^{0}}\}-C\|V\|_{s(x)} \max\{\|u\|^{q_{-}}_{1,\Phi},\|u\|^{q_{+}}_{1,\Phi}\}\\
 &\qquad -\epsilon \max\{ \|u\|_{1,\Phi}^{\phi_{0}}, \|u\|^{\phi^{0}}_{1,\Phi}\}-C^{\prime}(\epsilon)\max\{ \|u\|_{1,\Phi}^{\psi_{0}}, \|u\|^{\psi^{0}}_{1,\Phi}\},
 \end{align*}
 which implies that, for all $u\in W_{0}^{1,\Phi}(\Omega)$ with $\|u\|_{1,\Phi}<1,$
 \begin{align} \label{last inequality 1}
 \mathcal{I}(u)&\geq   \|u\|_{1,\Phi}^{\phi^{0}}-C\|V\|_{s(x)}\|u\|^{q_{-}}_{1,\Phi} -\epsilon \nonumber \|u\|_{1,\Phi}^{\phi_{0}}-C^{\prime}(\epsilon) \|u\|_{1,\Phi}^{\psi_{0}}\\ \nonumber
 &\geq \frac{1}{2}\|u\|_{1,\Phi}^{\phi^{0}}-C\|V\|_{s(x)}\|u\|^{q_{-}}_{1,\Phi}-C^{\prime}(\epsilon) \|u\|_{1,\Phi}^{\psi_{0}}\\
 &=\|u\|_{1,\Phi}^{\phi^{0}}\left(\frac{1}{2}-C\|V\|_{s(x)}\|u\|^{(q_{-})-\phi^{0}}_{1,\Phi}-C^{\prime}(\epsilon) \|u\|_{1,\Phi}^{\psi_{0}-\phi^{0}}\right).
 \end{align}
 Since $(q_{-})-\phi^{0}>0$ and $\psi_{0}-\phi^{0}>0$, then from (\ref{last inequality 1})  we can choose $\beta>0$ and $\rho>0$ such that $\mathcal{I}(u)\geq \beta>0$ for any $u\in W_{0}^{1,\Phi}(\Omega)$ with $\|u\|_{1,\Phi}=\rho$.

 (ii) From $(f_2)$, it follows that for any $L>0$ there exists a constant $C_L :=C(L)> 0$ depending on L, such that
 \begin{equation}\label{C(L)}
 F(x,t)\geq L |t|^{\phi^{0}}- C_L,\quad \forall (x,t)\in \Omega \times \mathbb{R}.
 \end{equation}
  Let $w\in W_{0}^{1,\Phi}(\Omega) $  with $w>0$. We take $t>1$  large enough to ensure that $\|tw\|_{1,\Phi}>1$. Then from (\ref{C(L)}) and Lemmas \ref{min-max for Phi}, \ref{estimation of Vu},  we have
 \begin{align*}
  \mathcal{I}(tw)&=\int_{\Omega}\Phi(x,|t\nabla w|)dx+\int_{\Omega}\frac{V(x)}{q(x)}|tw|^{q(x)}dx -\int_{\Omega}F(x,tw)dx\\
    &\leq  t^{\phi^{0}}\|w\|^{\phi^{0}}_{1,\Phi}+ C\|V\|^{\frac{\alpha}{r^{-}}}_{s(x)} \left[  M_{1}+ M_{2}\left(t^{2(q^{+}-\theta)}\|w\|^{2(q^{+}-\theta)}_{1,\Phi}+t^{2\frac{\theta r^{+}}{r^{-}}}\|w\|^{2\frac{\theta r^{+}}{r^{-}}}_{1,\Phi}\right) \right]\\
    &\qquad\qquad-L t^{\phi^{0}}\int_{\Omega}|w|^{\phi^{0}}dx+C_{L}|\Omega|\\
    &=t^{\phi^{0}}\left( \|w\|^{\phi^{0}}_{1,\Phi} -L\int_{\Omega}|w|^{\phi^{0}}dx\right)\\
    &\qquad+C\|V\|^{\frac{\alpha}{r^{-}}}_{s(x)} \left[  M_{1}+ M_{2}\left(t^{2(q^{+}-\theta)}\|w\|^{2(q^{+}-\theta)}_{1,\Phi}+t^{2\frac{\theta r^{+}}{r^{-}}}\|w\|^{2\frac{\theta r^{+}}{r^{-}}}_{1,\Phi}\right) \right]+C_{L}|\Omega|.
 \end{align*}
 By choosing  $L>0$  such that $\|w\|^{\phi^{0}}_{1,\Phi} -L\int_{\Omega}|w|^{\phi^{0}}dx<0$ and the fact that  $2\frac{\theta r^{+}}{r^{-}}<2(q^{+}-\theta)<\phi_{0}$,  then we obtain $\mathcal{I}(tw)\to -\infty$ as $t\to +\infty$.  The proof of this lemma  is complete.
 \end{proof}
 \begin{remark}
 	Note that in the proof of the geometrical structure lemma we do not need any sign condition on the potential $V$.
 \end{remark}
Now, we define  the  level at $\tilde{c}$ as follows
$$\tilde{c}= \inf_{\gamma\in \Gamma} \max_{t\in [0,1]} \mathcal{I}(\gamma(t)),$$
where
$\Gamma =\{\gamma\in \mathcal{C}([0,1], W_{0}^{1,\Phi}(\Omega)): \gamma(0)=0,\; \gamma(1)=u_{0}\}$ is the set of continuous paths joining $0$ and $u_{0}$, where $u_{0} \in W_{0}^{1,\Phi}(\Omega)$ is defined  in the previous  lemma. Let us recall the standard definitions of Palais-Smale sequence at the level $\tilde{c}$ and Palais-Smale condition at the level $\tilde{c}$ for a functional  $\mathcal{I}\in \mathcal{C}^{1}(E,\mathbb{R})$, where $E$ is a Banach space.
\begin{definition}\label{ Palais-Smale sequence}
	Let $E$ be a Banach space with dual space $E^{\ast}$ and  $(u_{n})$ a sequence in $E$. We say that $(u_{n})$ is a Palais-Smale sequence at the level $\tilde{c}$ for  a functional $\mathcal{I}\in \mathcal{C}^{1}(E,\mathbb{R})$ if
	$$ \mathcal{I}(u_{n}) \to \tilde{c},\; \; \mbox{ and }\; \; \|\mathcal{I}^{\prime}(u_{n})\|_{E^{*}}\to 0.$$
\end{definition}
\begin{definition}\label{ Palais-Smale condition}
	We say that a functional $\mathcal{I}$ satisfies the Palais-Smale condition at the level $\tilde{c}$  if any  Palais-Smale sequence at the level $\tilde{c}$ for $\mathcal{I}$ possesses a convergent subsequence.
\end{definition}
We note that, by  Lemma \ref{geometrical structure}  the existence of a Palais-Smale sequence at the level $\tilde{c}$ for our functional $\mathcal{I}$  is ensured. This  can be observed directly from the proof given in \cite{AR}.

Now, in order to  prove that  the functional $\mathcal{I}$ satisfies the Palais-Smale condition, we shall first show that any  Palais-Smale sequence for $\mathcal{I}$ is bounded. To this end, we have the following lemma:
\begin{lemma}\label{boundedness of u_n}
	Assume that the assumptions $(\Phi)$ and $(f^{\prime}_0)$-$(f_3)$ hold. Furthermore,  assume that $\phi^{0}<\min\{\psi_{0},q^{-}\},$ $\max\{\psi^{0},q^{+}\}<(\phi_{0})^{\ast},$ $q^{+}-\frac{1}{2}\phi_{0}<q^{-},$  and $s(x)>\frac{q(x)(\phi_{0})^{\ast}}{(\phi_{0})^{\ast}-q(x)}$ for every $x\in \overline{\Omega}$. If $V$ has a constant sign  a.e. on $\Omega$, 	then any Palais-Samle sequence at the level $\tilde{c}$ for $\mathcal{I}$ is bounded in $W_{0}^{1,\Phi}(\Omega)$.
\end{lemma}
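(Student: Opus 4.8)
The plan is to argue by contradiction: suppose $(u_n)$ is a Palais--Smale sequence at level $\tilde c$ for $\mathcal I$ that is unbounded in $W_0^{1,\Phi}(\Omega)$, so (passing to a subsequence) $\|u_n\|_{1,\Phi}\to+\infty$ and we may assume $\|u_n\|_{1,\Phi}>1$ for all $n$. The starting point is the standard identity coming from $\mathcal I(u_n)\to\tilde c$ and $\langle\mathcal I'(u_n),u_n\rangle\to 0$: for a parameter $\nu$ (which, by $(f_1)$, we take to be $\phi^0$ when $V\le 0$ and $q^+$ when $V\ge 0$) one forms
\begin{equation*}
\nu\,\mathcal I(u_n)-\langle\mathcal I'(u_n),u_n\rangle
=\Big(\nu\,\mathcal H(u_n)-\langle\mathcal H'(u_n),u_n\rangle\Big)
+\int_\Omega V(x)\Big(\tfrac{\nu}{q(x)}-1\Big)|u_n|^{q(x)}dx
+\int_\Omega H(x,u_n)\,dx,
\end{equation*}
where $H(x,t)=f(x,t)t-\nu F(x,t)$ as in $(f_1)$. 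The left side is $\nu\tilde c+o(1)+o(\|u_n\|_{1,\Phi})$. First I would dispose of the first two terms on the right using the sign of $V$ and the relation $(\ref{delta2})$: when $V\le 0$ and $\nu=\phi^0$, Lemma \ref{min-max for Phi} gives $\phi^0\mathcal H(u_n)-\langle\mathcal H'(u_n),u_n\rangle\ge 0$ and, since $\tfrac{\phi^0}{q(x)}-1<0$ on the set where $\nu=\phi^0\le q^-$, the potential term $\int_\Omega V(x)(\tfrac{\phi^0}{q(x)}-1)|u_n|^{q(x)}dx\ge 0$ as well; when $V\ge 0$ and $\nu=q^+$ the potential term is $\ge 0$ because $\tfrac{q^+}{q(x)}-1\ge 0$, and the $\mathcal H$-term is $\ge (q^+-\phi^0)\mathcal H(u_n)\ge 0$. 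In either case we arrive at the key bound
\begin{equation*}
\int_\Omega H(x,u_n)\,dx\le \nu\tilde c+o(1)+o(\|u_n\|_{1,\Phi}).
\end{equation*}

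Next I would exploit the nonquadraticity-type hypothesis $(\ref{Malghass condition})$, namely $\Gamma\!\big(x,\tfrac{F(x,t)}{|t|^{\phi_0}}\big)\le C_2 H(x,t)$ for $|t|\ge M$, to turn the integral bound on $H$ into a bound on $\int_\Omega \Gamma(x, F(x,u_n)/|u_n|^{\phi_0})dx$, hence on $\big\|\,F(\cdot,u_n)/|u_n|^{\phi_0}\,\big\|_{\Gamma}$ (via Lemma-type estimates for $\Gamma$ and the condition $(\ref{gamma condition})$, which makes $\Gamma$ a genuine $N$-function with $\gamma_0>N/\phi_0>1$). Splitting $\mathcal F(u_n)=\int_{\{|u_n|<M\}}F\,dx+\int_{\{|u_n|\ge M\}}F\,dx$ and handling the first part by the bound $|F(x,t)|\le C(1+\Psi(x,t))$ coming from $(f'_0)$, one estimates the dominant part by Hölder's inequality in the Musielak-Orlicz pairing:
\begin{equation*}
\int_{\{|u_n|\ge M\}}F(x,u_n)\,dx
=\int_{\{|u_n|\ge M\}}\frac{F(x,u_n)}{|u_n|^{\phi_0}}\,|u_n|^{\phi_0}\,dx
\le 2\,\Big\|\tfrac{F(\cdot,u_n)}{|u_n|^{\phi_0}}\Big\|_{\Gamma}\,\big\|\,|u_n|^{\phi_0}\,\big\|_{\widetilde\Gamma}.
\end{equation*}
The first factor is controlled (up to a power) by $\int_\Omega H(x,u_n)dx+\text{const}$; the second factor, using that the complementary exponent situation together with $\gamma_0>N/\phi_0$ gives a continuous embedding $W_0^{1,\Phi}(\Omega)\hookrightarrow L^{\widetilde\Gamma(\cdot,|\cdot|^{\phi_0})}$-type space (because $(\widetilde\Gamma)$ grows slower than $\Phi_\ast$ after the $|t|^{\phi_0}$ substitution, so $|t|^{\phi_0\cdot(\gamma_0)'}\preccurlyeq \Phi_\ast$), is bounded by $C\|u_n\|_{1,\Phi}^{\,\phi_0\beta}$ for some exponent $\beta<1$. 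Combining, $\mathcal F(u_n)\le C\big(1+\|u_n\|_{1,\Phi}^{\,a}\big)\cdot\big(o(\|u_n\|_{1,\Phi})+1\big)^{b}$ with $a+$(contribution of the $o$-term)$<\phi_0$, the crucial arithmetic being that $\phi_0\beta$ plus whatever power the $H$-bound contributes stays strictly below $\phi_0$.

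Finally I would feed this back into $\mathcal I(u_n)=\mathcal H(u_n)+\mathcal J(u_n)-\mathcal F(u_n)$. By Lemma \ref{min-max for Phi} and $\|u_n\|_{1,\Phi}>1$ together with the Poincaré inequality of Theorem \ref{compact embedding theorem2}, $\mathcal H(u_n)\ge c\|u_n\|_{1,\Phi}^{\phi_0}$; the term $\mathcal J(u_n)$ is estimated by Lemma \ref{estimation of Vu}, which bounds $\int_\Omega|V||u_n|^{q(x)}dx$ by $C\|V\|_{s(x)}^{\alpha/r^-}\big(M_1+M_2(\|u_n\|_{1,\Phi}^{2(q^+-\theta)}+\|u_n\|_{1,\Phi}^{2\theta r^+/r^-})\big)$ with both exponents strictly less than $\phi_0$ (this is exactly where $q^+-\tfrac12\phi_0<q^-$ is used). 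Hence
\begin{equation*}
\tilde c+o(1)=\mathcal I(u_n)\ge c\,\|u_n\|_{1,\Phi}^{\phi_0}-C\big(1+\|u_n\|_{1,\Phi}^{\,\max\{2(q^+-\theta),\,2\theta r^+/r^-\}}\big)-\mathcal F(u_n),
\end{equation*}
and since every power of $\|u_n\|_{1,\Phi}$ appearing on the right other than the leading $c\|u_n\|_{1,\Phi}^{\phi_0}$ is $<\phi_0$ (and the $o(\|u_n\|_{1,\Phi})$ terms are absorbed), the right side tends to $+\infty$ as $\|u_n\|_{1,\Phi}\to+\infty$, contradicting $\mathcal I(u_n)\to\tilde c$. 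Therefore $(u_n)$ is bounded. I expect the main obstacle to be the bookkeeping in the middle step: verifying that the exponent produced when one passes from the $H$-integral bound through $(\ref{Malghass condition})$ and the Musielak--Orlicz Hölder inequality to an estimate of $\mathcal F(u_n)$ in terms of a power of $\|u_n\|_{1,\Phi}$ is genuinely below $\phi_0$ — this is precisely the role of the hypothesis $\gamma_0>N/\phi_0$ and of the compact embeddings in Theorem \ref{compact embedding theorem1 }, and getting the complementary-function estimates for $\Gamma$ and $\widetilde\Gamma$ to line up correctly is the delicate point. The sign dichotomy for $V$ only enters at the very beginning (choice of $\nu$ and nonnegativity of the two auxiliary terms) and is otherwise harmless.
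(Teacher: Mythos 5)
Your opening step (forming $\nu\mathcal I(u_n)-\langle\mathcal I'(u_n),u_n\rangle$ with $\nu=\phi^0$ or $q^+$ according to the sign of $V$, discarding the two nonnegative auxiliary terms, and deducing a bound on $\int_\Omega H(x,u_n)\,dx$) and your treatment of $\mathcal J(u_n)$ via Lemma \ref{estimation of Vu} both coincide with the paper. The genuine gap is in your middle step. You claim that
$\int_{\{|u_n|\ge M\}}F(x,u_n)\,dx\le 2\|F(\cdot,u_n)/|u_n|^{\phi_0}\|_{\Gamma}\,\||u_n|^{\phi_0}\|_{\widetilde\Gamma}$
can be closed with $\||u_n|^{\phi_0}\|_{\widetilde\Gamma}\le C\|u_n\|_{1,\Phi}^{\phi_0\beta}$ for some $\beta<1$. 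This is not attainable: combining Lemma \ref{min-max for Phi tilde} with the modular--norm inequalities for $K(x,t):=\widetilde\Gamma(x,|t|^{\phi_0})$ and the continuous embedding $W_0^{1,\Phi}(\Omega)\hookrightarrow L^{K}(\Omega)$ yields at best $\||u_n|^{\phi_0}\|_{\widetilde\Gamma}\le C\|u_n\|_{1,\Phi}^{\phi_0(\gamma_0)'/(\gamma^0)'}$, and since $(\gamma_0)'\ge(\gamma^0)'$ the exponent is $\ge\phi_0$, i.e.\ $\beta\ge1$. Even in the borderline case $\beta=1$ you only obtain $\mathcal F(u_n)\le C\|u_n\|_{1,\Phi}^{\phi_0}$ with a constant that is not small, which does not contradict $\mathcal I(u_n)\to\tilde c$ once subtracted from $\mathcal H(u_n)\ge\|u_n\|_{1,\Phi}^{\phi_0}$. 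The condition $\gamma_0>N/\phi_0$ buys you $\phi_0(\gamma_0)'<(\phi_0)^\ast$, i.e.\ an embedding, not a sub-$\phi_0$ exponent.

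The paper supplies the missing smallness by normalizing: set $v_n=u_n/\|u_n\|_{1,\Phi}$, extract $v_n\rightharpoonup v$, and split into two cases. If $v=0$ a.e., one divides the inequality through by $\|u_n\|_{1,\Phi}^{\phi_0}$ so that the H\"older pairing produces the factor $\||v_n|^{\phi_0}\chi_{\{|u_n|>M\}}\|_{\widetilde\Gamma}$, and the \emph{compact} embedding $W_0^{1,\Phi}(\Omega)\hookrightarrow\hookrightarrow L^{K}(\Omega)$ (here is where $\gamma_0>N/\phi_0$ is really used) forces this factor to tend to $0$; paired with the bound on $\|F(\cdot,u_n)/|u_n|^{\phi_0}\|_\Gamma$ coming from (\ref{Malghass condition}) and the $H$-estimate, this contradicts the lower bound $1\le\cdots$. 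If $|\{v\neq0\}|>0$, then $|u_n|\to+\infty$ there, and hypothesis $(f_2)$ together with Fatou's lemma makes $\int F(x,u_n)/\|u_n\|_{1,\Phi}^{\phi^0}\,dx\to+\infty$, again a contradiction. Note that your proposal never invokes $(f_2)$, which is a reliable sign that the second case is unaccounted for; without it the desired conclusion cannot follow from $(f_1)$ alone along the route you describe.
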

\begin{proof}
	Let $(u_{n})$ be a  Palais-Smale sequence at the level $\tilde{c}$ for $\mathcal{I}$ in $W_{0}^{1,\Phi}(\Omega)$. We prove by contradiction that $(u_{n})$ is bounded in $W_{0}^{1,\Phi}(\Omega)$. Assuming that $(u_{n})$ is unbounded in $W_{0}^{1,\Phi}(\Omega)$, that is, $\|u_{n}\|_{1,\Phi}\to +\infty$.
	
	Let  $v_{n}:= \frac{u_{n}}{\|u_{n}\|_{1,\Phi}}$. It is clear that   $(v_{n})$ is bounded in $W_{0}^{1,\Phi}(\Omega)$. Hence,  there exists  a subsequence denoted again   $(v_{n})$ such that $v_{n}$ converges weakly  to $v$ in $W_{0}^{1,\Phi}(\Omega)$. Since  $W_{0}^{1,\Phi}(\Omega)$ is compactly embedded in $ L^{\Psi}(\Omega)$ (see the proof of relation (\ref{compact embedded of Psi})), thus $v_{n}$ converges strongly to $v$    in  $L^{\Psi}(\Omega)$, and then  a.e. in $\Omega$.
	
	 Define $ \Omega_{\neq}:=\{x\in \Omega: |v(x)|\neq 0 \}$.  We consider two possible cases: $|\Omega_{\neq}|=0$ or $|\Omega_{\neq}|>0$.  Firstly,  we assume that $|\Omega_{\neq}|=0$, that is, $v=0$ a.e. in $\Omega$. From the definition of $\mathcal{I}$ in $(\ref{definition of I})$,  Lemma \ref{min-max for Phi}, and the fact that $\|u_{n}\|_{1,\Phi}\to +\infty$, we get
 \begin{align}\nonumber
 \|u_{n}\|^{\phi_{0}}_{1,\Phi}&\leq \mathcal{I}(u_n)- \int_{\Omega}\frac{V(x)}{q(x)}|u_n|^{q(x)}dx +\int_{\Omega}F(x,u_n)dx\\
 &\leq \mathcal{I}(u_n)+\frac{1}{q^{-}}\int_{\Omega}|V(x)||u_n|^{q(x)}dx +\int_{\Omega}F(x,u_n)dx, \label{the change 1}
 \end{align}
 which implies that
 \begin{equation}\label{ineq.2}
 1\leq \frac{\mathcal{I}(u_n)}{\|u_{n}\|^{\phi_{0}}_{1,\Phi}}+\frac{1}{q^{-}\|u_{n}\|^{\phi_{0}}_{1,\Phi}}\int_{\Omega}|V(x)||u_n|^{q(x)}dx +\int_{\Omega}\frac{F(x,u_n)}{\|u_{n}\|^{\phi_{0}}_{1,\Phi}}dx.
 \end{equation}
 Now, we shall show that all terms of the right-hand side of (\ref{ineq.2}) tend to zero when $n$ is large enough,  which is the desired contradiction. Since $(u_n)$ is a Palais-Smale sequence type, then $(\mathcal{I}(u_n))$ is bounded. Hence, the first term of the right-hand side of (\ref{ineq.2}) tends to zero as $n$ is large enough. For the second one, from Lemma \ref{estimation of Vu} we get
 \begin{align}\label{last inequ 3}
 \frac{1}{q^{-}\|u_{n}\|^{\phi_{0}}_{1,\Phi}}\int_{\Omega}|V(x)||u_n|^{q(x)}dx\leq C\|V\|^{\frac{\alpha}{r^{-}}}_{s(x)} \frac{ M_{1}+ M_{2}\left(\|u_n\|^{2(q^{+}-\theta)}_{1,\Phi}+\|u_n\|^{2\frac{\theta r^{+}}{r^{-}}}_{1,\Phi}\right)}{q^{-}\|u_{n}\|^{\phi_{0}}_{1,\Phi}}.
 \end{align}
 Since, $2\frac{\theta r^{+}}{r^{-}}< 2(q^{-}-\theta)<2(q^{+}-\theta)<\phi_{0}$, then passing to the limit in (\ref{last inequ 3}), we obtain
 \begin{align}\label{last inequ 4}
 \frac{1}{q^{-}\|u_{n}\|^{\phi_{0}}_{1,\Phi}}\int_{\Omega}|V(x)||u_n|^{q(x)}dx \to 0, \mbox{ as } n\to +\infty.
 \end{align}	Hence, the second term tends to zero as $n$ is large enough.
 	For the third term, on the one hand it follows from the definition of $F$ that
 	\begin{equation}\label{bounded of F div with norm}
 	\int_{\{ |u_n|\leq M\}}\frac{|F(x,u_n)|}{\|u_{n}\|^{\phi_{0}}_{1,\Phi}}dx \leq \frac{C(M)}{\|u_{n}\|^{\phi_{0}}_{1,\Phi}}, 
 	\end{equation}
 	where $C(M)$ is a positive constant depending on $M$ defined in (\ref{Malghass condition}).
 	On the other hand, by using H\"{o}lder's inequality, we get
 	\begin{align*}
 	\int_{\{ |u_n|> M\}}\frac{F(x,u_n)}{\|u_{n}\|^{\phi_{0}}_{1,\Phi}}dx&=\int_{\{ |u_n|> M\}}\frac{F(x,u_n)}{|u_{n}|^{\phi_{0}}} |v_{n}|^{\phi_{0}}dx\\
 	&\leq 2\left\|\frac{F(x,u_n)}{|u_{n}|^{\phi_{0}}} \chi_{\{|u_n|> M\}}\right\|_{\Gamma}\| |v_{n}|^{\phi_{0}} \chi_{\{|u_n|> M\}} \|_{\tilde{\Gamma}}.
 	\end{align*}
 	Without loss of generality, we may suppose that  $\left\|\frac{F(x,u_n)}{|u_{n}|^{\phi_{0}}} \chi_{\{|u_n|> M\}}\right\|_{\Gamma}> 1$. Then, from  Lemma \ref{min-max for Phi}, we get
 	\begin{equation*}
 	\left\|\frac{F(x,u_n)}{|u_{n}|^{\phi_{0}}} \chi_{\{|u_n|> M\}}\right\|_{\Gamma} \leq \left[\int_{\{ |u_n|> M\}} \Gamma\left(x,\frac{F(x,u_n)}{|u_{n}|^{\phi_{0}}}\right)dx\right]^{\frac{1}{\gamma_{0}}}.
 	\end{equation*}
 	Hence, it follows from (\ref{Malghass condition}) that,
 	\begin{equation}\label{C plus Cprime}
 	\left\|\frac{F(x,u_n)}{|u_{n}|^{\phi_{0}}} \chi_{\{|u_n|> M\}}\right\|_{\Gamma} \leq C \left[ \int_{\Omega} H(x,u_n)dx\right]^{\frac{1}{\gamma_{0}}}+ C^{\prime},
 	\end{equation}
 	where $C$ and $C^{\prime}$ are positive constants independent of $n$.
 	
 	 In the case where  $V\leq 0$  a.e. on $\Omega$, then from the definition of the functional $\mathcal{I}$ we get
 	 \begin{align} \label{First equality }
 	 \phi^{0}\mathcal{I}(u_n) - \langle\mathcal{I}^{\prime}(u_n),u_n \rangle&=\int_{\Omega} \left[\phi^{0}\Phi(x,|\nabla u_n|)-\phi(x,|\nabla u_n|)|\nabla u_n|^{2}\right] dx\\ \nonumber
 	 &+ \int_{\Omega}V(x)\left(\frac{\phi^{0}}{q(x)}-1\right)|u_n|^{q(x)}dx +\int_{\Omega}(f(x,u_n)u_n -\phi^{0}F(x,u_n))dx.
 	 \end{align}
 	
 	 From $(\ref{delta2})$ and the fact that $\phi^{0}<q^{-}\leq q(x)$, the first and the second terms of the right-hand side of $(\ref{First equality })$ are nonnegative. Hence, the relation (\ref{First equality }) becomes
 	 \begin{align} \label{First equality 2 }
 	 \phi^{0}\mathcal{I}(u_n) - \langle\mathcal{I}^{\prime}(u_n),u_n \rangle\geq \int_{\Omega}H(x,u_n)dx.
 	 \end{align}
 	  It follows from (\ref{First equality 2 }) that, $\int_{\Omega} H(x,u_n)dx\leq C$, for $n$ large enough.
 	
 	 Now,  in the case where  $V\geq 0$  a.e. on $\Omega$, then from the definition of the functional $\mathcal{I}$ we get
 	  \begin{align} \label{First equality 4 }
 	  q^{+}\mathcal{I}(u_n) - \langle\mathcal{I}^{\prime}(u_n),u_n \rangle&=\int_{\Omega} \left[q^{+}\Phi(x,|\nabla u_n|)-\phi(x,|\nabla u_n|)|\nabla u_n|^{2}\right] dx\\ \nonumber
 	  &+ \int_{\Omega}V(x)\left(\frac{q^{+}}{q(x)}-1\right)|u_n|^{q(x)}dx +\int_{\Omega}H(x,u_n)dx.
 	  \end{align}
 	  Since $q(x)\leq q^{+}$, then following the same arguments as for (\ref{First equality 2 }),  we have also $\int_{\Omega} H(x,u_n)dx\leq C$, for $n$ large enough.
 	   This fact combined with relation (\ref{C plus Cprime}) yields
 	\begin{align}\label{last inequ 5}
 	\left\|\frac{F(x,u_n)}{|u_{n}|^{\phi_{0}}} \chi_{\{|u_n|> M\}}\right\|_{\Gamma} \leq C,\; \mbox{ for }\; n\; \mbox{ large enough}.
 	\end{align}
	where $C$ is a positive constant independent of $n$. Now, it remains to show that $\| |v_{n}|^{\phi_{0}} \chi_{\{|u_n|> M\}} \|_{\tilde{\Gamma}}\to 0$ as $n\to +\infty$.  Let $K(x,t):=\tilde{\Gamma}(x,|t|^{\phi_{0}})$. Since $\phi_{0}>1$ and $\tilde{\Gamma}\in N(\Omega)$, then it is clear that $K\in N(\Omega)$. Moreover, since $\Gamma$ satisfies $(H_2)$ then $K$  verifies the assumption (2) of Theorem \ref{compact embedding theorem1 } and by Remark \ref{bounded of K}, $K(x,k)$ is bounded for each $k>0$. Using Lemmas \ref{min-max for Phi} and \ref{min-max for Phi ast}, we get
\begin{align*}
 \lim\limits_{t\to +\infty}\frac{K(x,kt)}{\Phi_{\ast}(x,t)}\leq \frac{K(x,k)}{\Phi_{\ast}(x,1)}\lim\limits_{t\to +\infty} \frac{1}{t^{\phi_{0}(\gamma_{0})^{\prime}-(\phi_{0})^{\ast}}},
\end{align*}
where $(\gamma_{0})^{\prime}=\frac{\gamma_{0}}{\gamma_{0}-1}$ is defined as in (\ref{gamma condition}). Since $\frac{N}{\phi_{0}}<\gamma_{0}$, then $\phi_{0}(\gamma_{0})^{\prime}<(\phi_{0})^{\ast}$. From the last inequality it follows that
\begin{align*}
\lim\limits_{t\to +\infty}\frac{K(x,kt)}{\Phi_{\ast}(x,t)}=0, \mbox{uniformly for } x\in \Omega.
\end{align*}
Thus, form Theorem \ref{compact embedding theorem1 }, $W_{0}^{1,\Phi}(\Omega)$ is compactly embedded in $L^{K}(\Omega)$, which implies that
\begin{align*}
\int_{\Omega} \tilde{\Gamma}(x,|v_n|^{\phi_{0}})dx \to 0, \mbox{ as } n\to +\infty.
\end{align*}
Consequently,
\begin{align}\label{last inequ 6}
\||v_{n}|^{\phi_{0}} \chi_{\{|u_n|> M\}} \|_{\tilde{\Gamma}}\to 0, \mbox{ as } n\to +\infty.
\end{align}
Hence, passing to the limit in  (\ref{ineq.2}) and using  (\ref{last inequ 4}),(\ref{last inequ 5}) and (\ref{last inequ 6}), we obtain a contradiction.

Secondly, we assume that $|\Omega_{\neq}|>0$. Then obviously, $|u_n|=|v_n|\|u_{n}\|_{1,\Phi}\to +\infty$ in $\Omega_{\neq}$. Hence, for some positive real $M$ we have $\Omega_{\neq}\subset \{x\in \Omega: |u_n|\geq M \}$ for $n$ large enough.
Using Lemma \ref{min-max for Phi}, we get
\begin{align*}
\frac{\mathcal{I}(u_n)}{\|u_n\|^{\phi^{0}}_{1,\Phi}}&\leq 1+\frac{1}{q^{-}\|u_{n}\|^{\phi^{0}}_{1,\Phi}}\int_{\Omega}|V(x)||u_n|^{q(x)}dx -\int_{\Omega}\frac{F(x,u_n)}{\|u_{n}\|^{\phi^{0}}_{1,\Phi}}dx\\
&=1+\frac{1}{q^{-}\|u_{n}\|^{\phi^{0}}_{1,\Phi}}\int_{\Omega}|V(x)||u_n|^{q(x)}dx -\int_{\{ |u_n|\leq M\}}\frac{|F(x,u_n)|}{\|u_{n}\|^{\phi^{0}}_{1,\Phi}}dx\\
&\qquad \qquad -\int_{\{ |u_n|> M\}}\frac{F(x,u_n)}{|u_{n}|^{\phi^{0}}} |v_{n}|^{\phi^{0}}dx.
\end{align*}
Now, using relations (\ref{last inequ 4}), (\ref{bounded of F div with norm}), assumption $(f_2)$ and Fatou's Lemma, we obtain a contradiction. Hence,  $(u_n)$ is bounded in $W_{0}^{1,\Phi}(\Omega)$. The proof of this lemma is complete.
\end{proof}
\begin{remark}
  The preceding lemma holds true under a slightly weaker assumption than  $V$ has a constant sign. Indeed,  assume that there exists a constant $\rho$ such that $\phi^{0}\leq\rho\leq q^{+}$ and $V(x)\left(\frac{\rho}{q(x)}-1\right)\geq 0$ a.e. on $\Omega$. Then, by taking $H(x,t)=f(x,t)t-\rho F(x,t)$ and following the same arguments as in (\ref{First equality })-(\ref{First equality 2 }), we obtain the previous lemma.
\end{remark}
To finish the proof of the Palais-Smale condition for $\mathcal{I}$, we only need to show the following lemma:
\begin{lemma}\label{Strongly converge}
	Assume that the assumptions of   Lemma \ref{boundedness of u_n} hold. Then, the Palais-Smale sequence at the level $\tilde{c}$ for $\mathcal{I}$ possesses a convergent subsequence.	
\end{lemma}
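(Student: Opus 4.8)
The plan is to exploit the $(S_{+})$ property of the principal operator $\mathcal{H}'$ proved in Proposition \ref{S plus}. Let $(u_{n})$ be a Palais-Smale sequence at the level $\tilde{c}$ for $\mathcal{I}$. By Lemma \ref{boundedness of u_n} it is bounded in the reflexive space $W_{0}^{1,\Phi}(\Omega)$, so, passing to a subsequence still denoted $(u_{n})$, we may assume $u_{n}\rightharpoonup u$ in $W_{0}^{1,\Phi}(\Omega)$ for some $u\in W_{0}^{1,\Phi}(\Omega)$. Since $q^{+}<(\phi_{0})^{\ast}$, $\psi^{0}<(\phi_{0})^{\ast}$ and $\alpha(x):=\frac{s(x)q(x)}{s(x)-q(x)}<(\phi_{0})^{\ast}$, the compact embeddings established in the proof of Proposition \ref{the functional I} give $u_{n}\to u$ strongly in $L^{q(x)}(\Omega)$, in $L^{\Psi}(\Omega)$ and in $L^{\alpha(x)}(\Omega)$, and, along a further subsequence, $u_{n}\to u$ a.e. in $\Omega$.

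Next I would show that $\langle \mathcal{H}'(u_{n}),u_{n}-u\rangle\to 0$. Writing $\langle \mathcal{I}'(u_{n}),u_{n}-u\rangle=\langle \mathcal{H}'(u_{n}),u_{n}-u\rangle+\langle \mathcal{J}'(u_{n}),u_{n}-u\rangle-\langle \mathcal{F}'(u_{n}),u_{n}-u\rangle$, the left-hand side tends to $0$ because $(u_{n}-u)$ is bounded while $\|\mathcal{I}'(u_{n})\|_{(W_{0}^{1,\Phi}(\Omega))^{\ast}}\to 0$. By Proposition \ref{S plus}-ii) the operator $\mathcal{F}'$ is completely continuous, hence $\mathcal{F}'(u_{n})\to \mathcal{F}'(u)$ in $(W_{0}^{1,\Phi}(\Omega))^{\ast}$ and therefore $\langle \mathcal{F}'(u_{n}),u_{n}-u\rangle\to 0$. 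For the potential term $\langle \mathcal{J}'(u_{n}),u_{n}-u\rangle=\int_{\Omega}V(x)|u_{n}|^{q(x)-2}u_{n}(u_{n}-u)\,dx$ I would apply the variable-exponent H\"older inequality with the three exponents $s(x)$, $q'(x)=\frac{q(x)}{q(x)-1}$ and $\alpha(x)$, whose reciprocals add up to $1$, obtaining $\big|\langle \mathcal{J}'(u_{n}),u_{n}-u\rangle\big|\le C\|V\|_{s(x)}\,\big\||u_{n}|^{q(x)-1}\big\|_{q'(x)}\,\|u_{n}-u\|_{\alpha(x)}$; since $(q(x)-1)q'(x)=q(x)$, Proposition \ref{proposition inequality} bounds the middle factor by $\max\{\|u_{n}\|_{q(x)}^{q^{-}-1},\|u_{n}\|_{q(x)}^{q^{+}-1}\}$, which stays bounded as $(u_{n})$ is bounded in $L^{q(x)}(\Omega)$, while $\|u_{n}-u\|_{\alpha(x)}\to 0$ by the compact embedding into $L^{\alpha(x)}(\Omega)$. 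Hence $\langle \mathcal{J}'(u_{n}),u_{n}-u\rangle\to 0$, and combining the three limits yields $\langle \mathcal{H}'(u_{n}),u_{n}-u\rangle\to 0$.

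Finally, since $u_{n}\rightharpoonup u$ in $W_{0}^{1,\Phi}(\Omega)$ and $\limsup_{n\to\infty}\langle \mathcal{H}'(u_{n}),u_{n}-u\rangle\le 0$, the $(S_{+})$ property of $\mathcal{H}'$ from Proposition \ref{S plus}-i) gives $u_{n}\to u$ strongly in $W_{0}^{1,\Phi}(\Omega)$, which is the claimed convergent subsequence and completes the verification of the Palais-Smale condition at the level $\tilde{c}$. The step I expect to be the main obstacle is the estimate of the potential term: because $V$ only belongs to $L^{s(x)}(\Omega)$ and $q$ is a non-constant exponent, one must choose the H\"older triple carefully and check that the exponents $(q(x)-1)q'(x)=q(x)$ and $\alpha(x)$ both remain strictly below the Sobolev-conjugate exponent $(\phi_{0})^{\ast}$, which is precisely what the hypothesis $s(x)>\frac{q(x)(\phi_{0})^{\ast}}{(\phi_{0})^{\ast}-q(x)}$ guarantees; the remaining ingredients, the $(S_{+})$ property of $\mathcal{H}'$ and the complete continuity of $\mathcal{F}'$, are quoted from Propositions \ref{S plus} and \ref{the functional I}.
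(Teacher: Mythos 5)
Your proposal is correct and follows essentially the same route as the paper: boundedness from Lemma \ref{boundedness of u_n}, weak convergence of a subsequence, the decomposition of $\langle \mathcal{H}'(u_n),u_n-u\rangle$ into the $\mathcal{I}'$, $\mathcal{F}'$ and $\mathcal{J}'$ contributions, the three-exponent H\"older estimate with the compact embedding into $L^{\alpha(x)}(\Omega)$ for the potential term, and the $(S_+)$ property of $\mathcal{H}'$ to upgrade to strong convergence. No gaps.
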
	
\begin{proof}
		Let $(u_{n})$ be a  Palais-Smale sequence at the level $\tilde{c}$ for $\mathcal{I}$ in $W_{0}^{1,\Phi}(\Omega)$. Then, $\mathcal{I}^{\prime}(u_{n})\to 0$ in $(W_{0}^{1,\Phi}(\Omega))^{\ast}$ and from Lemma \ref{boundedness of u_n}, $(u_{n})$ is bounded in $W_{0}^{1,\Phi}(\Omega)$. As $W_{0}^{1,\Phi}(\Omega)$ is reflexive, then there exists  a subsequence denoted again   $(u_{n})$ such that $u_{n}$ converges weakly  to $u$ in $W_{0}^{1,\Phi}(\Omega)$.
		 From  Proposition \ref{S plus}-(i), the mapping $\mathcal{H}^{\prime}$ is of type $(S_{+})$. Thus,   to conclude the result of this lemma it suffices to show that
		\begin{equation}\label{ S_{+} of H}
		\limsup_{n\to\infty}\langle \mathcal{H}^{\prime}(u_n), u_{n}-u\rangle\leq 0.
		\end{equation}
	 Indeed, using the definition of $\mathcal{I}^{\prime}$ in  Proposition \ref{derivative of I}, we have
		\begin{align}\label{S_{+}}
\langle\mathcal{H}^{\prime}(u_n), u_n -u \rangle= 	\langle\mathcal{I}^{\prime}(u_{n}), u_n -u\rangle + \langle\mathcal{F}^{\prime}(u_{n}) , u_n -u\rangle -\langle\mathcal{J}^{\prime}(u_{n}) , u_n -u\rangle.
		\end{align}
	It is clear that,
	\begin{align}\label{I^{prime} tends to zero}
			\langle \mathcal{I}^{\prime}(u_{n}), u_n -u\rangle \to 0.
	\end{align}
	From  Proposition \ref{S plus}-(ii),  $\mathcal{F}^{\prime}$  is a completely continuous linear operator. Hence,  
	\begin{align}\label{ F tends to 0}
	 \langle\mathcal{F}^{\prime}(u_{n}), u_n -u\rangle \to 0.
	 \end{align}
	 Now, it remains to show that $\langle\mathcal{J}^{\prime}(u_{n}), u_n -u\rangle \to 0,$ that is,
	 \begin{align}\label{stong convergence of J}
	 \int_{\Omega} V(x)|u_n|^{q(x)-2}u_n (u_n-u)dx \to 0.
	 \end{align}
	 From the assumptions, we have $1<q(x)<(\phi_{0})^{\ast}$ and $1<\alpha(x)< (\phi_{0})^{\ast}$ for every $x\in \overline{\Omega}$ with $\alpha(x):=\frac{s(x)q(x)}{s(x)-q(x)}$. Then, as in the proof of relation (\ref{ compact embedding of alpha 2}), the space $W_{0}^{1,\Phi}(\Omega)$ is compactly embedded in $L^{q(x)}(\Omega)$ and in  $L^{\alpha(x)}(\Omega)$.  Since $(u_n)$ is bounded in $W_{0}^{1,\Phi}(\Omega)$, then $u_n$ converges strongly to $u$ in $L^{\alpha(x)}(\Omega)$. Consequently,  using H\"{o}lder's inequality  and   Proposition \ref{proposition inequality},  then  (\ref{stong convergence of J}) holds by the following inequality
	 \begin{align*}
	 |\int_{\Omega} V(x)|u_n|^{q(x)-2}u_n (u_n-u)dx|&\leq C_{0}\|V\|_{s(x)}\||u_n|^{q(x)-1}\|_{q^{\prime}(x)}\|u_n-u\|_{\alpha(x)}\\
	 &\leq C_{1}\|V\|_{s(x)}\|u_n\|_{q(x)}^{\tau} \|u_n-u\|_{\alpha(x)},
	 \end{align*}
	 where $C_1 $ is a positive constant independent of $n$ and $\tau \in \{q^{-}-1,q^{+}-1\}$. Finally, it follows from (\ref{I^{prime} tends to zero}), (\ref{ F tends to 0}) and (\ref{stong convergence of J}) that $(\ref{ S_{+} of H})$ holds. Hence, since $\mathcal{H}^{\prime}$ is of type $(S_{+})$, then $u_{n}$ converges strongly to $u$ in $W_{0}^{1,\Phi}(\Omega)$. The proof of Theorem \ref{main theorem 2}   is complete.
\end{proof}
Next, as $W_{0}^{1,\Phi}(\Omega)$ is a reflexive and separable Banach space, there exist $(e_j)_{j\in \mathbb{N^{\ast}}}\subseteq W_{0}^{1,\Phi}(\Omega)$ and $(e^{\ast}_j)_{j\in \mathbb{N^{\ast}}}\subseteq (W_{0}^{1,\Phi}(\Omega))^{\ast}$ such that
$$W_{0}^{1,\Phi}(\Omega)=\overline{\mbox{span}\{ e_j : j \in \mathbb{N^{\ast}} \}}, \qquad (W_{0}^{1,\Phi}(\Omega))^{\ast}=\overline{\mbox{span}\{ e^{\ast}_j : j \in \mathbb{N^{\ast}} \}}  $$
and
$$\langle e_i,e^{\ast}_{j} \rangle=\begin{cases}
1, & i=j \\
0, & i\neq j.
\end{cases}$$
For $k\in \mathbb{N^{\ast}}$ denote by
$$X_j = \mbox{span}\{e_j \}, \quad Y_k=\oplus^{k}_{j=1}X_{j}, \quad \mbox{ and } \quad  Z_{k}=\overline{\oplus^{\infty}_{j=k}X_{j}}.$$
\begin{proof}[Proof of Theorem \ref{main theorem 3, Fountain}]
We denote by $$\beta_{k}:=\sup\left\{\int_{\Omega} \Psi(x,|u|)dx : \|u\|_{1,\Phi}=1, u\in Z_k \right\}.$$ Since $\Psi \ll \Phi_{\ast}$, then $\lim\limits_{k\to +\infty}\beta_{k}=0$ (see \cite[Lemma 4.3]{P. Zhao}).	Now, we verify the conditions of Fountain theorem. It follows from   assumption $(f_4)$ that $\mathcal{F}$ is even, hence the functional $\mathcal{I}$ is  even.  From Lemmas \ref{boundedness of u_n} and \ref{Strongly converge}, $\mathcal{I}$ satisfies the Palais-Smale condition; hence the condition (3)  of Fountain theorem holds. It remains to prove that conditions $(1)$ and $(2)$ in Fountain  theorem hold.
	
	$(1)$ By $(f^{\prime}_{0})$, it follows that
	\begin{align}
	|F(x,t)|\leq C(\Psi(x,t)+|t|),\quad \forall (x,t)\in \Omega\times \mathbb{R}.
	\end{align}
	 Let $u\in Z_{k}$ with $\|u\|_{1,\Phi}>1$.  From the definition of $\mathcal{I}$ in (\ref{definition of I}),  Lemmas \ref{min-max for Phi}, \ref{estimation of Vu} and Poincaré's inequality,  we obtain
	\begin{align}\label{Fout. 1}\nonumber
	\mathcal{I}(u)&=\int_{\Omega}\Phi(x,|\nabla u|)dx+\int_{\Omega}\frac{V(x)}{q(x)}|u|^{q(x)}dx -\int_{\Omega}F(x,u)dx\\
	&\geq \|u\|^{\phi_{0}}_{1,\Phi}-C\|V\|^{\frac{\alpha}{r^{-}}}_{s(x)}\left[ M_{1}+ M_{2}\left(\|u\|^{2(q^{+}-\theta)}_{1,\Phi}+\|u\|^{2\frac{\theta r^{+}}{r^{-}}}_{1,\Phi}\right)\right]-C_{1}\int_{\Omega} \Psi(x,|u|)dx-C_{2}\|u\|_{1,\Phi},
	\end{align}
	where we recall that $\alpha =r^{+}$ if $\|V\|_{s(x)}>1$ and $\alpha=r^{-}$ if $\|V\|_{s(x)}\leq 1$. Furthermore,  from Lemma \ref{min-max for Phi}, we have
$$\int_{\Omega} \Psi(x,|u|)dx=\int_{\Omega} \Psi\left(x,\|u\|_{1,\Phi} \frac{|u|}{\|u\|_{1,\Phi}}\right)dx\leq \|u\|^{\psi^{0}}_{1,\Phi}  \int_{\Omega} \Psi\left(x, \frac{|u|}{\|u\|_{1,\Phi}}\right)dx.$$
Using the definition of $\beta_{k}$, the relation (\ref{Fout. 1})  becomes
\begin{align*}
\mathcal{I}(u)\geq \|u\|^{\phi_{0}}_{1,\Phi}-C\|V\|^{\frac{\alpha}{r^{-}}}_{s(x)}\left[ M_{1}+ M_{2}\left(\|u\|^{2(q^{+}-\theta)}_{1,\Phi}+\|u\|^{2\frac{\theta r^{+}}{r^{-}}}_{1,\Phi}\right)\right] -C_1 \|u\|^{\psi^{0}}_{1,\Phi} \beta_{k} -C_{2}\|u\|_{1,\Phi}.
\end{align*}
 	Now, let $u_k\in Z_k$ with $\|u\|_{1,\Phi}=r_k = (2C_1 \beta_k)^{\frac{1}{\phi_{0}-\psi^{0}}}$. Since $\phi_0 < \psi^{0}$ and $\lim\limits_{k\to +\infty}\beta_{k}=0$, then $r_k \to +\infty$ as $k \to +\infty$. Thus, we have
 	\begin{align*}
 	\mathcal{I}(u)\geq (2C_1 \beta_k)^{\frac{\phi_{0}}{\phi_{0}-\psi^{0}}}-C& \|V\|^{\frac{\alpha}{r^{-}}}_{s(x)}\left[ M_{1}+ M_{2}\left( {r_k}^{2(q^{+}-\theta)}+ {r_k}^{ 2\frac{\theta r^{+}}{r^{-}}}\right)\right]\\
 	&-C_{1}(2C_1 \beta_k)^{\frac{\psi^{0}}{\phi_{0}-\psi^{0}}}\beta_{k}-C_{2}r_k,
 	\end{align*}
 	\begin{align*}
 	\mathcal{I}(u)\geq \frac{1}{2}{r_k}^{\phi_{0}}-C\|V\|^{\frac{\alpha}{r^{-}}}_{s(x)}&\left[ M_{1}+ M_{2}\left( {r_k}^{2(q^{+}-\theta)}+ {r_k}^{ 2\frac{\theta r^{+}}{r^{-}}}\right)\right] -C_{2}r_k.
 	\end{align*}
 Since	$2\frac{\theta r^{+}}{r^{-}}< 2(q^{-}-\theta)<2(q^{+}-\theta)<\phi_{0}$ and $1<\phi_0$, then
 \begin{align*}
\inf_{\{u\in Z_{k}, \|u\|=r_k \}}\mathcal{I}(u) \to +\infty \mbox{ as }  k\to +\infty.
 \end{align*}

 (2)  Let $w\in Y_k $  with $w>0, \|w\|_{1,\Phi}=1$ and $t>1$. Then, from Lemmas \ref{min-max for Phi} and (\ref{C(L)}) we have
 \begin{align*}
 \mathcal{I}(tw)&\leq t^{\phi^{0}}\left( \|w\|^{\phi^{0}}_{1,\Phi} -L\int_{\Omega}|w|^{\phi^{0}}dx\right)\\
 &\qquad+C\|V\|^{\frac{\alpha}{r^{-}}}_{s(x)} \left[  M_{1}+ M_{2}\left(t^{2(q^{+}-\theta)}\|w\|^{2(q^{+}-\theta)}_{1,\Phi}+t^{2\frac{\theta r^{+}}{r^{-}}}\|w\|^{2\frac{\theta r^{+}}{r^{-}}}_{1,\Phi}\right) \right]+C_{L}|\Omega|.
 \end{align*}
 It is clear that we can choose $L>0$ so that  $\|w\|^{\phi^{0}}_{1,\Phi} -L\int_{\Omega}|w|^{\phi^{0}}dx<0$. With this fact and  since $2\frac{\theta r^{+}}{r^{-}}<2(q^{+}-\theta)<\phi_{0}$ then we have $\mathcal{I}(tw)\to -\infty$ as $t\to +\infty$. Thus, there exists $\tilde{t}>r_{k}>1$ such that $\mathcal{I}(\tilde{t}w)<0$. By setting $\rho_{k}=\tilde{t}$,  then we obtain
 $$\max_{\{u\in Y_{k}, \|u\|=\rho_k \}}\mathcal{I}(u) \leq 0.$$
 The proof of this theorem is complete.
 	\end{proof}
 	\section{Application}
 	Let us give an example of  function $f$ satisfying the   assumptions $(f^{\prime}_0)$-$(f_3)$ and for which our main Theorems \ref{main theorem 2} and \ref{main theorem 3, Fountain} hold.
 	
 	 Let us fix $\Phi(x,t)=\frac{1}{p(x)}|t|^{p(x)}$ with $p\in \mathcal{C}^{1-0}(\overline{\Omega})$. Then, the operator $\mbox{div}(\phi(x,|\nabla u|)\nabla u)$ involved in  $(P)$ is the $p(x)$-Laplacian operator, i.e.  $\Delta_{p(x)}u:=\mbox{div}(|\nabla u|^{p(x)-2}\nabla u)$. In this case, we have $\phi_{0}=p^{-}$ and $\phi^{0}=p^{+}$ with $1<p^{-}\leq p(x)\leq p^{+}<N$.
 	
 	 In the case where $V\geq 0$ a.e. on $\Omega$, we take   $F(x,t)=|t|^{q^{+}}\ln(1+|t|)$, with $q^{+}+1<\frac{Np^{-}}{N-p^{-}}$. The derivative with respect to $t$ of $F(x,t)$ is given by $F^{\prime}(x,t):=f(x,t)=q^{+}|t|^{q^{+} -2}t\ln(1+|t|)+\frac{t|t|^{q^{+} -1}}{1+|t|}$ and we have: $f(x,t)t-q^{+}F(x,t)=\frac{|t|^{q^{+}}}{1+|t|}.$
 	   It is clear that $f$ satisfies the assumptions $(f^{\prime}_{0}),$ $(f_{2}$)-$(f_{4})$. Moreover, since $\frac{F(x,t)}{|t|^{\theta}} \to 0$ for all $\theta>q^{+}$ then   from (\ref{consequence of AR}), $f$ does not satisfy the (A-R) condition. Now, it remains to show that the assumption $(f_{1})$ holds. To this end, let us consider the function $\Gamma(x,t)=|t|^{\beta}$, where $1<\frac{N}{p^{-}}<\beta<\frac{q^{+}}{q^{+}-p^{-}}$  . Then, $\Gamma\left(x,\frac{F(x,t)}{|t|^{p^{-}}}\right)=|t|^{\beta(q^{+}-p^{-})}\ln^{\beta}(1+|t|)$. Since $\beta(q^{+}-p^{-})<q^{+}$, then $\frac{|t|^{\beta(q^{+}-p^{-})+1}\ln^{\beta}(1+|t|)}{|t|^{q^{+}+1}}\to 0$ as $|t|\to +\infty$. Hence,  the assumption $(f_{1})$ holds.
 	
 	   Now, in the case where  $V\leq  0$ a.e. on $\Omega$ we can take $F(x,t)=|t|^{p^{+}}\ln(1+|t|)$. By the same arguments above,  the choice of $\Gamma(x,t)=|t|^{\beta}$, where $1<\frac{N}{p^{-}}<\beta<\frac{p^{+}}{p^{+}-p^{-}}$, ensures easily that $f$ verifies the  assumptions $(f^{\prime}_{0})$-$(f_4)$. Consequently, the main Theorems \ref{main theorem 2} and \ref{main theorem 3, Fountain} hold.
 	   \begin{remark}
 	   	~~~
 	   	\begin{enumerate}
 	   		\item In the case where $V\leq 0$ a.e. on $\Omega$, we can not take the same  function $F$ considered in the first case, i.e. $F(x,t)=|t|^{q^{+}}\ln(1+|t|)$. Indeed,   in this case,  the nonlinearity $f$ satisfies the (A-R) condition.
 	   		\item As in the first remark, we can not consider the function $F(x,t)=|t|^{p^{+}}\ln(1+|t|)$ when $V\geq 0$ a.e. on $\Omega$. Indeed, in this case    we have
 	   		$$f(x,t)t-q^{+} F(x,t)=(p^{+}-q^{+})|t|^{p^{+}}\ln(1+|t|)+\frac{|t|^{p^{+}+1}}{1+|t|} < 0, \mbox{ for } |t| \mbox{ large enough}.$$
 	   		Hence,   the nonlinearity $f$ do not satisfy the assumption $(f_{1})$.
 	   	\end{enumerate}
 	   \end{remark}

\begin{thebibliography}{99} \frenchspacing
		\bibitem{Abdou}	A. Abdou, A. Marcos,
	\newblock{\it Existence and multiplicity of solutions for a Dirichlet problem involving perturbed $p(x)$-Laplacian operator,}	  \newblock Electron. J. Differential Equations, Vol. 2016 (2016), No. 197, 1-19.
	\bibitem{AR}   A. Ambrosetti, P. H. Rabinowitz,
	\newblock{\it Dual Variational Methods In Critical Point Theory And Applications,}
	\newblock J. Funct. Anal. Vol. 14 (1973), No. 4, 349-381.
\bibitem{Benou} N. 	Benouhiba,
\newblock{\it On the eigenvalues of weighted $p(x)$-Laplacian on $\mathbb{R}^N$, }
 Nonlinear.	Anal. Vol. 74 (2011), No. 1,  235-243.
		\bibitem{Silva}	 M.L.M. Carvalho, J.V.A. Goncalves, E.D. da Silva,
	\newblock{\it On quasilinear elliptic problems without the
		Ambrosetti-Rabinowitz condition, }
	\newblock	J. Math. Anal. Appl. Vol. 426 (2015), No. 1, 466-483.
	\bibitem{Bonanno 1} G. Bonanno, G. Molica Bisci, V. R\u{a}dulescu, 
		\newblock{\it Quasilinear elliptic non-homogeneous Dirichlet problems through Orlicz-Sobolev spaces,} 
		\newblock	Nonlinear Anal. Vol. 75 (2012), No. 12, 4441-4456.
	\bibitem{Bonanno 2} G. Bonanno, G. Molica Bisci, V. R\u{a}dulescu, 
		\newblock{\it Arbitrarily small weak solutions for a nonlinear eigenvalue problem in Orlicz-Sobolev spaces,} 
		\newblock	Monatsh. Math. 165 (3-4), 305-318 (2011).
	\bibitem{Bonanno 3} G. Bonanno, G. Molica Bisci, V. R\u{a}dulescu, 
		\newblock{\it  Infinitely many solutions for a class of nonlinear eigenvalue problems in Orlicz-Sobolev spaces,} 
		\newblock	C.R. Acad. Sci. Paris 349, 263-268 (2011).
	\bibitem{Bonanno 4} G. Bonanno, G. Molica Bisci, V. R\u{a}dulescu, 
		\newblock{\it Existence of three solutions for a non-homogeneous Neumann problem through Orlicz-Sobolev spaces,}
		\newblock Nonlinear Anal. Vol. 74 (2011), No. 14, 4785-4795.
 \bibitem{Borenau}M. Boreanu, P. Pucci, V.  R\u{a}dulescu, 
		\newblock{\it 	 Multiplicity of solutions for a class of anisotropic elliptic equations with variable exponent,}
		\newblock Complex Var. Elliptic Equ. 56 (2011), 755-767.
 \bibitem{Chen} Y. Chen, S. Levine, R. Rao,
 \newblock{\it Variable exponent, linear growth functionals in image restoration,}
\newblock SIAM J. Appl. Math.  Vol. 66 (2006), No. 4, 1383-1406.
		\bibitem{H. Toan}	N.T. Chung, H.Q. Toan,
	\newblock{\it On a nonlinear and non-homogeneous problem without (A-R) type condition in Orlicz-Sobolev
		spaces, }
	\newblock Appl. Math. Comput.  Vol. 219 (2013), No. 14, 7820-7829.
		\bibitem{Clément}	Ph. Cl\'{e}ment, M. Garc\'{\i}a-Huidobro, R. Man\'{a}sevich, K. Schmitt,
		\newblock{\it Mountain pass type solutions for quasilinear elliptic
			equations, }
		\newblock Calc. Var. Partial Differential Equations, Vol. 11 (2000), No. 1, 33-62.
	\bibitem{Costa}	D.G. Costa, C.A. Magalh\~{a}es,
	\newblock{\it Variational elliptic problems which are nonquadratic at infinity,}
	\newblock Nonlinear Anal. Vol. 23 (1994), No. 11, 1401-1412.
	\bibitem{Edmunds}  D. Edmunds,  J. R\'{a}kosn\'{\i}k,
	\newblock{\it Sobolev embeddings with variable exponent, }
	\newblock Studia Mathematica, Vol. 143 (2000), No. 3, 267-293.
			\bibitem{X. Fan} X. Fan,
			\newblock{\it An imbedding theorem for Musielak-Sobolev spaces,}
			\newblock Nonlinear Anal. Vol. 75 (2012), No. 4, 1959-1971.
			\bibitem{X. Fan2} X. Fan,
			\newblock{\it Differential equations of divergence form in Musielak-Sobolev spaces and a sub-supersolution method,}
			\newblock J. Math. Anal. Appl. Vol. 386 (2012), No. 2, 593-604.
		\bibitem{X. Fan 1}	X. Fan, Q. H. Zhang,
		\newblock{\it Existence of solutions for $p(x)$-Laplacian Dirichlet problem, }
		\newblock Nonlinear	Anal. Vol. 52 (2003), No. 8, 1843-1852.
			\bibitem{D. Zhao} X. Fan, D. Zhao,
			\newblock{\it On the spaces $L^{p(x)}(\Omega)$ and $W^{1,p(x)}(\Omega)$,}
			\newblock J. Math. Anal. Appl. Vol. 263 (2001), No. 2,  424-446.
		\bibitem{Fukagai} N. Fukagai, M. Ito, K. Narukawa,
		\newblock{\it Positive solutions of quasilinear elliptic equations with
			critical Orlicz-Sobolev nonlinearity on $\mathbb{R}^N$, }
				\newblock Funkcial. Ekvac. Vol. 49 (2006), No. 2, 235-267.
				\bibitem{Bin2}	B. Ge, Z.Y. Chen,
				\newblock{\it Existence of infinitely many solutions for double phase problem with sign-changing potential, }
				\newblock RACSAM Rev. R. Acad. Cienc. Exactas F\'{\i}s. Nat. Ser. A Mat. (2019), 1-12.
		\bibitem{Bin}  B. Ge, D.-J. Lv, J.-F. Lua,
		\newblock{\it Multiple solutions for a class of double phase problem without the
			Ambrosetti-Rabinowitz conditions, }
		\newblock Nonlinear Anal. Vol. 188 (2019), 294-315.
			\bibitem{Hudzik}  H. Hudzik,
			\newblock{\it On generalized Orlicz-Sobolev space,}
			\newblock Funct. Approx. Comment. Math. 1976.
			\bibitem{Kefi} K. Kefi,
			\newblock{\it $p(x)$-Laplacian  with  indefinite  weight,}
			\newblock Proc. Amer. Math. Soc. Vol. 139, (2011), No. 12,  435-4360.
		\bibitem{Kim}	I.H. Kim,  Y.H. Kim,
		\newblock{\it Mountain pass type solutions and positivity of the infimum eigenvalue
			for quasilinear elliptic equations with variable exponents, }
		\newblock Manuscripta Math. 147 (2015),	169-191.
			\bibitem{Kovacik} O. Kovacik, J. Rakosuik,
			\newblock{\it On spaces $L^{p(x)}(\Omega)$ and $W^{k,p(x)}(\Omega)$,}
			\newblock Czechoslovak Math. J. Vol. 41 (1991), No. 4,  592-618.
			\bibitem{V2} G. Li, V. R\u{a}dulescu, D. D. Repov\v{s}, Q. Zhang,
			\newblock{\it  Nonhomogeneous Dirichlet problems without the Ambrosetti-Rabinowitz condition, }
			\newblock Topol. Methods Nonlinear Anal. Vol. 51 (2018), No. 1 (2018), 55-77.
			\bibitem{P. Zhao} D. Liu, P. Zhao,
			\newblock{\it Solutions for a quasilinear elliptic equation in Musielak-Sobolev spaces,}
			\newblock Nonlinear. Anal.  Vol. 26 (2015),  315-329.
			\bibitem{Pucci1} M. Mih\u{a}ilescu, P. Pucci, V.   R\u{a}dulescu, 
				\newblock{\it 	 Nonhomogeneous boundary value problems in anisotropic Sobolev spaces,}
				\newblock C. R. Acad. Sci. Paris, Ser. I 345 (2007), 561-566.
			 \bibitem{Pucci2} M. Mih\u{a}ilescu, P. Pucci, V.   R\u{a}dulescu, 
				\newblock{\it 	 Eigenvalue problems for anisotropic quasilinear elliptic equations with variable exponent,}
				\newblock J. Math. Anal. Appl. 340 (2008), 687-698.
			\bibitem{Mihai} M. Mih\u{a}ilescu, V. R\u{a}dulescu,
			\newblock {\it Neumann problems associated to non-homogeneous differential operators in Orlicz-Sobolev spaces,}
			\newblock Ann. Inst. Fourier, Vol. 58 (2008), No. 6, 2087-2111.
			\bibitem{Mihai2} M. Mih\u{a}ilescu, V. R\u{a}dulescu, D. Repov\v{s},
			\newblock{\it On a non-homogeneous eigenvalue problem involving a potential: an Orlicz-Sobolev space setting,}
			\newblock J. Math. Pures Appl. Vol. 93 (2010), No. 2, 132-148.
			\bibitem{Musielak}  J. Musielak,
			\newblock{\it Orlicz Spaces and Modular Spaces,}
			\newblock  Lecture Notes in Math. Vol. 1034, Spring-Verlag, Berlin, 1983.
			\bibitem{Rao} M.N. Rao, Z.D. Ren,
			\newblock {\it Theory of Orlicz Spaces,}
			 \newblock Marcel Dekker, New York, 1985.
		\bibitem{Ruzicka}	 M. R\r{u}\v{z}i\u{c}ka,
		\newblock{\it Electrorheological Fluids: Modeling and Mathematical Theory,}
		\newblock Lecture Notes in Math. Vol. 1748, Springer-Verlag, Berlin, 2000.
			 \bibitem{V} V. R\u{a}dulescu, Q. Zhang,
			 \newblock{\it Double phase anisotropic variational problems and combined
			 	effects of reaction and absorption terms, }
			 \newblock J. Math. Pures Appl. Vol. 118 (2018), 159-203.
			 \bibitem{P. Zhao 2} B. Wang, D. Liu, P. Zhao,
			 \newblock{\it H\"{o}lder continuity for nonlinear elliptic problem in Musielak-Orlicz-Sobolev space,}
			 \newblock J. Differential Equations,
			 Vol. 266 (2019), No. 8,  4835-4863.
			 \bibitem{Willem} M. Willem,
			 \newblock{\it Minimax Theorems,}
			 \newblock  Birkh\"{a}user, Boston, 1996.
					\end{thebibliography}
\end{document}